\newcommand{\N}{\mathbb{N}}
\newcommand{\R}{\mathbb{R}}
\newcommand{\C}{\mathbb{C}}
\newcommand{\M}{\mathbb{M}}
\renewcommand{\L}{\mathsf{L}^2}
\renewcommand{\H}{\mathsf{H}}
\newcommand{\HS}{\mathcal{H}}
\renewcommand{\a}{\mathfrak{a}}
\newcommand{\A}{{\mathcal{A}}}
\newcommand{\Res}{{\mathcal{R}}}
\newcommand{\J}{J} 
\newcommand{\B}{\mathscr{B}}
\renewcommand{\S}{\mathscr{S}}
\newcommand{\dom}{\mathrm{dom}}
\newcommand{\supp}{\mathrm{supp}}
\newcommand{\dist}{\mathrm{dist}}
\newcommand{\la}{\langle}
\newcommand{\ra}{\rangle}
\newcommand{\eps}{\varepsilon}
\newcommand{\e}{_{\varepsilon}}
\newcommand{\ke}{_{k,\varepsilon}}
\newcommand{\al}{\alpha}
\newcommand{\ga}{\gamma}
\renewcommand{\d}{\,\mathrm{d}}
\newcommand{\ds}{\displaystyle}
\newcommand{\Id}{\mathrm{I}}
\newcommand{\cupl}{\bigcup\limits}
\newcommand{\suml}{\sum\limits}
\newcommand{\liml}{\lim\limits}
\newcommand{\capty}{\mathrm{cap}}
\newcommand{\wt}{\widetilde}
\newcommand{\wh}{\widehat}
\newcommand{\ceq}{\coloneqq}
\newcommand{\ess}{{\rm ess}}
\newcommand{\disc}{{\rm disc}}
\newcommand{\restr}{\hspace{-3pt}\restriction}
\newcommand{\ssubset}{\subset\joinrel\subset}
\theoremstyle{plain}
\newtheorem{theorem}{Theorem}[section]
\newtheorem*{theorem*}{Theorem}
\newtheorem{lemma}[theorem]{Lemma}
\newtheorem*{lemma*}{Lemma}
\newtheorem{proposition}[theorem]{Proposition}
\newtheorem{corollary}[theorem]{Corollary}
\theoremstyle{remark}
\newtheorem{remark}[theorem]{Remark}
\newtheorem*{remark*}{Remark} 
\newtheorem*{example*}{Example} 
\theoremstyle{definition}
\newtheorem{definition}[theorem]{Definition} 
\numberwithin{equation}{section}
\numberwithin{figure}{section}
\begin{document}
	
	\title[Spectrum of the Laplacian on a domain perturbed by tiny resonators]{Spectrum of the Laplacian on a domain perturbed by small resonators}
	
	\author[Giuseppe Cardone]{Giuseppe Cardone\,$^1$}
	\address{$^1$  Department of Mathematics and Applications ``Renato Caccioppoli'', University of Naples Federico II, Naples, Italy} 
	\email{giuseppe.cardone@unina.it}

	\author[Andrii Khrabustovskyi]{Andrii Khrabustovskyi\,$^{2,3}$}
	\address{$^2$ Department of Physics, Faculty of Science, University of
		Hradec Kr\'{a}lov\'{e}, Hradec Kr\'{a}lov\'{e}, Czech Republic} 
	\address{$^3$ Department of Theoretical Physics,
		Nuclear Physics Institute of the Czech Academy of Sciences, \v{R}e\v{z}, Czech Republic} 
	\email{andrii.khrabustovskyi@uhk.cz}

\begin{abstract}
It is widely known that the spectrum of the Dirichlet Laplacian is stable under 
small perturbations of a domain, while in the case of  the Neumann or mixed boundary 
conditions the spectrum may abruptly  change.
In this work we discuss an example of such a domain perturbation.  
Let $\Omega$ be a (not {necessarily} bounded) domain in $\mathbb{R}^n$. We perturb it to
$  \Omega_\varepsilon=\Omega\setminus \cup_{k=1}^m  S_{k,\varepsilon},$
where  $S_{k,\varepsilon}$ are  closed surfaces with  small suitably scaled holes  (``windows'') through 
which the bounded domains enclosed by these surfaces (``resonators'') are connected to the outer domain. When $\eps$ goes to zero, the resonators shrink to points.
We prove that in the limit $\varepsilon\to 0$ the spectrum of the Laplacian on $\Omega_\varepsilon$ with the Neumann boundary conditions on  $S_{k,\varepsilon}$ and the Dirichlet boundary conditions on the outer boundary 
converges to the union of the spectrum of the Dirichlet Laplacian on $\Omega$ and the numbers
$\gamma_k$, $k=1,\dots,m$, being equal $1/4$ times the limit of the ratio between the capacity of the $k$th window and the volume of the $k$th resonator.
We obtain an estimate on  the rate of this convergence with respect to the  Hausdorff-type  metrics.  
Also,  an application of this result is presented: 
we construct an unbounded  waveguide-like  domain  with inserted resonators such that 
the eigenvalues of 
the  Laplacian  on this domain lying  below the essential spectrum threshold do coincide 
with prescribed numbers.  
\end{abstract}

\keywords{Neumann Laplacian, resonators, complex geometry, spectrum, waveguide}
\subjclass[2020]{35P05, 35P15, 35J05, 35B34} 

\maketitle

\section{Introduction\label{sec:1}}

In this section we   provide
the  background motivating us to examine the present problem; then
we  sketch the main results. At the end we   give an overview of existing
literature concerning PDEs on domains with small resonators.

\subsection{Motivations}
In what follows, if $\Omega$ is a domain {(i.e., a connected and open set)} in $\R^n$, we denote by $-\Delta_\Omega^D$ and $-\Delta_\Omega^N$ the Dirichlet and the
Neumann Laplacians on $\Omega$, respectively. 
Also, if $\A$ is a self-adjoint operator with purely discrete spectrum bounded from below and accumulating at $\infty$, we denote by $\lambda_k(\A)$ its $k$th eigenvalue; as usual the eigenvalues are arranged in the ascending order and repeated according to their multiplicities.
\smallskip

It is known   that the spectrum of 
the Dirichlet Laplacian is stable under  
small perturbations of a domain (see, e.g., \cite[Theorem~1.5]{RT75} for a precise statement).
The situation becomes more subtle for the Laplacian with the Neumann or mixed boundary conditions.
The example below demonstrating this  goes back to the Courant-Hilbert
monograph \cite{CH53}; later it was elaborated in more details by Arrieta, Hale and Han in \cite{AHH91}.
Let $\Omega$ be a bounded domain in $\R^n$, $n\ge 2$. We perturb it to a domain $\Omega\e$ by 
attaching a small \emph{resonator} (in Remark~\ref{rem:reso} we will explain why such a name is reasonable) --
the union  of a bounded open set  $B\e$  of the diameter $\eps$  and 
a narrow passage $P\e$ of the length $\eps$ and  the cross-section diameter 
$\mathcal{O}(\eps^\al)$; see Figure~\ref{fig-CH}.
Since $\Omega\e$ and $\Omega$ differs only in a $\mathcal{O}(\eps)$-neighbourhood
of some point on $\partial\Omega$, the first naive guess is that the $k$th eigenvalues
of the Laplacians on $\Omega\e$ and $\Omega$ are close as $\eps\ll 1$.
This is indeed true for the Dirichlet case: 
it is not hard to show\footnote{One can prove 
\eqref{lambdaD:conv} using  \cite[Theorem~{1.5}]{RT75} and the fact that 
$\Omega\e$ converges metrically to $\Omega$.}
that 
\begin{gather}\label{lambdaD:conv}
\forall k\in\N:\quad\lambda_k(-\Delta_{\Omega\e}^D)\to \lambda_k(-\Delta_{\Omega}^D)\text{ as }\eps\to 0,
\end{gather}
while, for the Neumann eigenvalues this property fails:
one has $\lambda_1(-\Delta_{\Omega\e}^N)=0$, and for the next eigenvalues 
the following   result holds if $\al>\frac{n+1}{n-1}$  \cite[Theorems~4.1,4.2]{AHH91}:
$$\forall k\in\N\setminus\{1\}:\quad \lambda_k(-\Delta_{\Omega\e}^N)\to\lambda_{k-1}(-\Delta_{\Omega}^N)\text{ as }\eps\to 0,$$
in particular, $\lambda_2(-\Delta_{\Omega\e}^N)\to 0$ as $\eps\to 0$.

\begin{figure}[t]
\begin{center}
\begin{picture}(200,105)
\includegraphics[height=35mm]{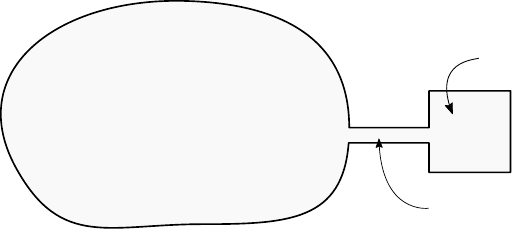}
\put(-150,60){$\Omega$}
\put(-35,5){$P\e$}
\put(-13,71){$B\e$}
\end{picture}
\end{center}
\caption{Attached resonator}\label{fig-CH}
\end{figure}  

\begin{figure}[h]
\begin{center}
\begin{picture}(150,100)
\includegraphics[height=35mm]{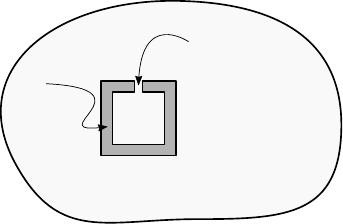}
\put(-25,30){$\Omega\e^{\rm out}$}
\put(-68,78){$P\e$}
\put(-143,60){$S\e$}
\put(-95,45){$B\e$}

\end{picture}
\end{center}
\caption{Inserted resonator}\label{fig-Sch}
\end{figure}

Instead of attaching a resonator, one can also insert it \emph{inside} $\Omega$.
Such a problem was studied by Schweizer in \cite{Sch15}. 
He considered   $\Omega\e=\Omega\setminus S\e$,
where $\Omega$ is a bounded domain in $\R^n$, and   $S\e$ is a thin layer separating 
the small set $B\e\subset\Omega$ and the outer domain $\Omega\e^{\rm out}$;  
$B\e$ and $\Omega\e^{\rm out}$  are connected through a narrow passage $P\e$. This domain is depicted on
Figure~\ref{fig-Sch}.
Denote by $L\e$ and $A\e$
 the length and the cross-section area of the passage $P\e$, respectively,
 and by $V\e$ the volume of $B\e$.
Let $\A\e$ be the  Laplacian on $\Omega\e$ subject to the Neumann conditions
on $\partial S\e$ and the Dirichlet conditions on $\partial\Omega$.
It was shown in \cite{Sch15} that, if $B\e$ and $P\e$ are appropriately scaled,  
one has
\begin{gather*}
\lambda_1(\A\e) \to\gamma ,\quad
\lambda_k(\A\e)\to\lambda_{k-1}(-\Delta_{\Omega}^D),\ k\in\N\setminus\{1\}\
\text{ as }\eps\to 0,
\end{gather*}
{where
\begin{gather}\label{gamma:Sch}
	 \gamma\ceq\lim_{\eps\to 0}{ \frac{A\e/L\e}{ V\e}};
\end{gather}}
the result is obtained under the assumption
that $\Omega$ is small enough in order to get
\begin{gather}
\label{ga:restr}
\ga<\lambda_1(-\Delta_\Omega^D).
\end{gather}
The proof in \cite{Sch15} relies  on variational methods.
An important ingredient is a  well-known result
for  compact  self-adjoint  operators  
claiming that the  existence  of  approximate  eigenfunctions  implies  the existence  of  nearby  eigenvalues;
in  \cite{Sch15} this result is applied for the resolvents of $\A\e$ and $-\Delta_{\Omega}^D$.
\smallskip

\emph{Our goal is to extend and complement the results obtained in 
\cite{Sch15} to resonators separated from the outer domain by a closed surface with a tiny windows.}
In the two next subsections we sketch the main results of this work.

\begin{figure}[h]
\begin{center}
\includegraphics[width=0.28\textwidth]{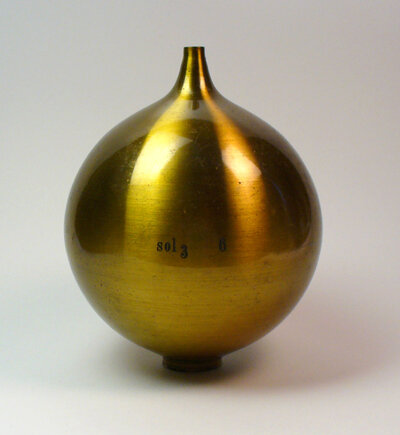}
\end{center}
\caption{Brass spherical Helmholtz resonator from around 1890-1900. 
 \href{https://commons.wikimedia.org/wiki/File:Helmholtz_resonator.jpg}{Photograph} by \href{https://en.wikipedia.org/wiki/User:Brian0918?rdfrom=commons:User:Brian0918}{brian0918}. 
 License: \href{https://creativecommons.org/licenses/by-sa/2.5/deed.en}{CC BY-SA 2.5}.}
\label{fig-Helm}
\end{figure}

\begin{remark}\label{rem:reso}
The reason for using the name ``resonator'' for these domain 
perturbations is that their geometry resembles the well-known
\emph{Helmholtz  resonators} --  acoustic  devices  
consisting of a resonator volume  being  connected by a thin channel to the outer space (Figure~\ref{fig-Helm}). The Helmholtz resonator is characterized by a  resonance frequency $\omega_{\rm res}$ which can be calculated by 
a widely known asymptotic formula
$$\omega_{\rm res}  \approx C\sqrt\frac{A}{LV}.$$
Here $C$ is the speed of sound, $V$ is the  resonator  volume,  
$L$ is the channel length and $A$ is the  channel  cross  section area.
Apparently, Schweizer's work \cite{Sch15} provides the first mathematically rigorous derivation of the resonator frequency formula. 
\end{remark}

\subsection{Sketch of the main convergence results}

Let $\Omega$ be a (not {necessarily} bounded) domain in $\R^n$, $n\ge 2$.  Let $m\in\N$. 
Let $\eps$ and $d\ke$, $k=1,\dots,m$ be small positive parameters such that $d\ke=o(\eps)$ as $\eps\to 0$. 
We introduce the sets
\begin{gather*}
S\ke=\partial B\ke\setminus D\ke.
\end{gather*}
Here $B\ke$ (``resonator'') is a subset of $\Omega$ such that $B\ke\cong\eps B_k$, where 
$B_k\subset\R^n$ is a bounded domain, 
$D\ke$ (``window'') is a subset of $\partial B\ke$ such that 
$D\ke\cong d\ke D_k$, where $D_k$ is a bounded set on an $(n-1)$-dimensional hyperplane. One has few other technical assumptions on the geometry of the above sets, whose description we postpone to Section~\ref{sec:2}. Removing the sets $S\ke$ from $\Omega$ we get the domain (Figure~\ref{fig-main})
$$\ds\Omega\e=\Omega\setminus\left(\cupl_{k=1}^m S\ke\right).$$ 
\begin{figure}[h]
\begin{center}
\begin{picture}(180,125)
\includegraphics{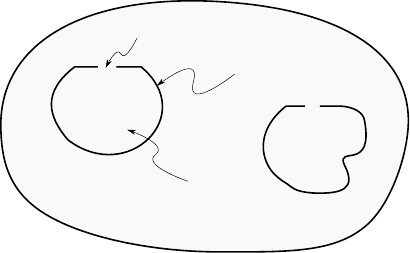}
\put(-83,83){${S\ke}$}
\put(-105,30){${B\ke}$}
\put(-135,106){${D\ke}$}
\end{picture}
\end{center}
\caption{The domain $\Omega\e$; here $m=2$}\label{fig-main}
\end{figure}

We denote by $\A\e$ the  Laplacian on $\Omega\e$ subject to the 
Dirichlet conditions on $\partial\Omega$ and the Neumann conditions on $S\ke$.
Assume that for each $k\in\{1,\dots,m\}$ the limit 
\begin{gather}\label{gamma:0}
\gamma_k\ceq \lim_{\eps\to 0}{\frac{\frac14\capty(D\ke)}{|B\ke| }}
\end{gather}
exists and is finite; here $\capty(D\ke)$ and $|B\ke|$ stand  for the capacity of $D\ke$ and 
the volume of $B\ke$, respectively. Further, in the Hilbert space $\L(\Omega)\oplus\C^m$ we introduce the operator 
$\A$ via 
$$
\A=(-\Delta_{\Omega}^D)\oplus\Gamma,
$$
where
$
\Gamma:\C^m\to\C^m$ is a diagonal matrix with the numbers $\gamma_k,\ k\in\M$, 
standing on the main diagonal.
Evidently,  one has
$$\sigma(\A)=\sigma(-\Delta_\Omega^D)\cup\{\gamma_k,\ k=1,\dots,m\}.$$

Our main result reads as follows.

\begin{theorem*}
The spectrum of   $\A\e$ converges to the spectrum of   $\A$ 
in the Hausdorff sense as $\eps\to 0$, i.e.

\begin{itemize}
\item
$\forall \lambda\in \sigma(\A)$ there exists a family $(\lambda\e)_{\eps>0}$ with 
$\lambda\e\in \sigma(\A\e)$ such that $\lambda\e\to \lambda$ as $\eps\to 0$, and

\item
$\forall \lambda\in\R\setminus \sigma(\A)$ there exists $\delta>0$ 
such that $\sigma(\A\e)\cap (\lambda-\delta,\lambda+\delta)=\varnothing$ for small enough $\eps$.
\end{itemize} 
\end{theorem*}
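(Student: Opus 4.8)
The plan is to establish the two bullet points of the theorem simultaneously by proving a quantitative resolvent estimate: namely, that $\|(\A\e - z)^{-1}J_\eps - J_\eps(\A - z)^{-1}\|$ (and its adjoint analogue) tends to zero as $\eps\to 0$, uniformly for $z$ in compact subsets of $\C\setminus\R$, where $J_\eps$ is a suitable identification operator between $\L(\Omega)\oplus\C^m$ and $\L(\Omega\e)$. Norm-resolvent convergence of this type is well known to imply Hausdorff convergence of spectra (see the standard perturbation-theoretic machinery, e.g.\ the work on generalized norm-resolvent convergence for operators acting in varying spaces); in fact, since the paper announces a rate with respect to a Hausdorff-type metric, I would keep track of the explicit dependence on $\eps$ and on the discrepancies $\bigl|\capty(D\ke)/(4|B\ke|) - \gamma_k\bigr|$ and $d\ke/\eps$ throughout.

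The construction of $J_\eps$ is the conceptual heart. A function $u\in\L(\Omega)$ is mapped essentially to its restriction to $\Omega\e\setminus\bigcup_k B\ke$ (the ``outer'' part), cut off near the surfaces $S\ke$, while the component $c = (c_1,\dots,c_m)\in\C^m$ is mapped to (a normalized multiple of) $c_k$ on each resonator $B\ke$; the normalization $|B\ke|^{-1/2}$ makes the map asymptotically isometric. The reverse identification $\wt J_\eps:\L(\Omega\e)\to\L(\Omega)\oplus\C^m$ sends $v$ to its (extended-by-something-harmless) restriction to the outer region, paired with the averages $|B\ke|^{-1/2}\int_{B\ke} v\,\d x$ on the resonators. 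One then has to verify: (i) $J_\eps$, $\wt J_\eps$ are almost isometric and almost mutually adjoint/inverse, with errors $O(\eps)$; (ii) a sesquilinear-form estimate comparing the form of $\A\e$ evaluated on $J_\eps(u,c)$ with the form of $\A$ evaluated on $(u,c)$. Step (ii) is where the quantity $\gamma_k$ enters: the Neumann condition on $S\ke$ together with the tiny window $D\ke$ forces any finite-energy function to be nearly constant on $B\ke$, and the leading-order energy stored in the ``neck'' through the window of capacity $\capty(D\ke)$, divided by the $\L$-mass $|B\ke|$ of the nearly-constant profile on the resonator, reproduces exactly the coefficient $\capty(D\ke)/(4|B\ke|)$, where the factor $1/4$ comes from the two-sided nature of the capacitary potential across the hyperplane hole (reflection doubling). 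The capacity asymptotics of a small hole in a hyperplane — i.e.\ $\capty(D\ke)\sim d\ke^{n-2}\capty(D_k)$ for $n\ge 3$, with the logarithmic modification for $n=2$ — is the standard Beale–type computation and will be invoked as a lemma.

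The main obstacle, as I see it, is controlling the behaviour of functions $v\in\dom(\A\e^{1/2})$ in a fixed-size neighbourhood of each surface $S\ke$ and across the window: one must show that $v$ restricted to $B\ke$ is close in $\L(B\ke)$ to its average, with a remainder governed by the Dirichlet energy, and that the ``flux'' of $v$ through the window $D\ke$ is captured by a capacitary quadratic form up to lower-order corrections. This requires an extension/trace argument on the annular region $\eps B_k'\setminus \eps B_k$ just outside the resonator, a rescaling $x\mapsto x/d\ke$ near the window to see the capacitary profile, and a matching of the two scales $\eps$ and $d\ke$ using $d\ke=o(\eps)$. A Poincaré inequality on $B_k$ (scaled by $\eps$) handles the ``nearly constant'' claim; the delicate point is that the constant in the resulting estimate must not blow up, which is why one needs the geometric regularity hypotheses on $B_k$, $D_k$ deferred to Section~\ref{sec:2}. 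Once these local estimates are assembled, the global form estimate follows by summing over $k=1,\dots,m$ and adding the trivial Dirichlet contribution from the unperturbed part of $\Omega$, and the norm-resolvent bound — hence the Hausdorff convergence with an explicit rate — is obtained by the usual argument writing the resolvent difference as a product of $(\A\e - z)^{-1}$, the form discrepancy, and $(\A - z)^{-1}$.
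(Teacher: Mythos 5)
Your proposal follows essentially the same route as the paper: it constructs the same identification operators $\J\e$ (restriction to the outer region plus $|B\ke|^{-1/2}c_k$ on the resonators) and $\wt\J\e$ (restriction plus $\L$--averages), relies on a capacitary test function near the windows and a scaled Poincar\'e inequality on $B\ke$ to get the form discrepancy, and passes to Hausdorff spectral convergence via the generalized norm-resolvent convergence machinery for varying Hilbert spaces — which is exactly how the paper proceeds (Section~\ref{sec:4}, invoking Theorems~\ref{thA1}--\ref{thA3}). The only imprecision is your explanation of the factor $1/4$: in the paper it arises as a product of two separate halves — the $1/2$ amplitude of the bridging profile $\tfrac12 H\ke\phi\ke$ in the energy-space identification operator $\J^1\e$ (contributing $(\tfrac12)^2$ is incorrect; rather one $\tfrac12$ from the profile amplitude pairs with) the $\tfrac12$ in the identity $\capty(D\ke)=2\int_{D\ke}\partial_n H\ke\,\d x'$, the latter being the genuine ``two-sided'' effect, the former coming purely from the chosen matching at the window — but this does not affect the validity of the overall argument.
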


We derive an estimate on the rate of this convergence with respect to the 
Hausdorff metrics (see~\eqref{th1:est}). 
If the domain $\Omega$ is bounded, we get
\begin{gather*}
\forall k\in\N:\ \lambda_k(\A\e)\to\lambda_k(\A)\text{ as }\eps\to 0.
\end{gather*}
We also demonstrate the convergence of eigenfunctions.
The precise results are formulated in Section~\ref{sec:2} -- see Theorem~\ref{th1}, \ref{th2} and Corollary~\ref{coro:bounded}.

Note that, unlike \cite{Sch15} (cf.~\eqref{ga:restr}), 
we put no restrictions on the location of the numbers $\gamma_k$.
Also note that the type of the boundary conditions on the external boundary of $\Omega\e$ (i.e., on $\partial\Omega$) is not essential in our analysis, see Remark~\ref{rem:Neumann} at the end of Section~\ref{sec:2} for more details.

\subsection{Application: a waveguide with prescribed eigenvalues}
It is an interesting  and long-standing  topic of spectral theory -- 
the design of domains   with
prescribed spectral properties.
For example, in the celebrated paper \cite{CdV87} Y.~Colin de Verdi\'ere 
constructed a bounded domain $\Omega$ such that the 
first  $m$ non-zero eigenvalues of the Neumann Laplacian on $\Omega$ coincide with $m$ predefined pairwise distinct positive numbers.
In \cite{HSS91} Hempel, Seco and Simon  
constructed a bounded   domain such that the essential spectrum of the 
Neumann Laplacian on $\Omega$ coincides with the predefined closed set $S$;
in the case 
$0\in S$ the domain $\Omega$ looks like a chain of ``rooms''  and ``passages''.
This result was further elaborated by Hempel, Kriecherbauer and Plankensteiner in \cite{HKP97}, where also 
a prescribed bounded part of the discrete spectrum was realized; the 
designed domain has a ``comb'' structure.
We refer to the recent overview \cite{BK19}
for more references on the construction of Laplacians (and other self-adjoint operators) with the predefined 
(or partly predefined) spectrum.

We are  aimed to contribute to the above topic taking advantage of the convergence results we sketched in the previous subsection. These results suggest that by inserting small resonators one can create new eigenvalues having nothing in common with the eigenvalues of the Laplacian on the unperturbed domain. Moreover, it is easy to see that by a proper choice of the parameters $d\ke$ characterizing the windows sizes, one can make these new eigenvalue convergent ($\eps\to 0$) to predefined numbers.
It turns out that one can achieve even more -- \emph{the precise 
coincidence of these new eigenvalues with prescribed numbers} (for  fixed small enough $\eps$). To fix the ideas, we discuss an example demonstrating how to do this.

\begin{figure}[h]
\begin{center}
\begin{picture}(280,50)
\includegraphics[width=0.7\linewidth]{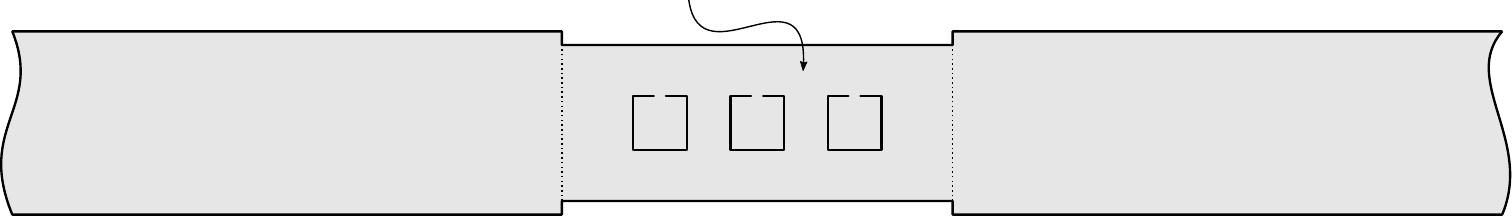}
\put(-55,15){$\Omega^+$}
\put(-220,15){$\Omega^-$}
\put(-146,40){$\wt\Omega\e$}
\end{picture}
\end{center}
\caption{Waveguide $\Omega\e$}
\label{fig-waveguide}
\end{figure}  

Let the unperturbed  domain be the straight unbounded tube (waveguide).   
It is known that the spectrum of the  Dirichlet Laplacian on such a domain 
coincides with $[\Lambda',\infty)$, where $\Lambda'>0$ is the smallest eigenvalue of the   Dirichlet Laplacian on the tube cross-section. We perturb $\Omega$ by narrowing the tube in some bounded part  
 and then by inserting $m$ resonators within this narrowed part (see Figure~\ref{fig-waveguide} -- here $\wt\Omega\e$ is a narrowed part with resonators, and $\Omega^\pm$ are semi-infinite straight tubes).
Such a perturbation does not change the essential spectrum, but may produce 
discrete eigenvalues below $\Lambda'$. \emph{We prove that the resonators actually can be chosen in such way that  these eigenvalues do coincide with prescribed numbers.} The role of the narrowing is to guaranteed that only $m$ eigenvalues appear below $\Lambda'$, and no further eigenvalues emerge in the vicinity of $\Lambda'$. 

The precise result is formulated in  Section~\ref{sec:5} -- see Theorem~\ref{th:exact}.
The proof is based on the multi-dimensional version
of the intermediate value theorem established in \cite{HKP97}. 
One of the key-points of the proof is the monotonicity of
eigenvalues of $\A\e$  with respect to the parameters $d\ke$ characterizing 
the sizes of the windows $D\ke$.
This is an advantage of the resonators we treat in this work 
comparing to the resonators as on Figures~\ref{fig-CH} and \ref{fig-Sch}, for which the monotonicity 
of eigenvalues (with respect to any of the involved geometrical parameters)  is not at all obvious.

\subsection{Methods}
As it was already mentioned, we do not assume the boundedness of $\Omega$,
whence the resolvents of $\A\e$ and $\A$ are non-compact operators in general.
Therefore, we cannot rely on the methods used in \cite{Sch15}.
Instead, our proofs utilize the abstract results for studying the convergence of operators
in varying Hilbert spaces developed in \cite{P06,P12,KP21}.
{In Section~\ref{sec:3} we formulate  Theorem~\ref{thAAA} which 
	is essentially the combination of these abstract results;  for the reader's convenience we recall these results in Appendix~\ref{appendix:A}
	and demonstrate how they imply Theorem~\ref{thAAA}
	in Appendix~\ref{appendix:B}}. 
The proof of the main results is given is Section~\ref{sec:4}.
Our main task is to show that {$\A\e$ and $\A$ satisfy the assumptions of  the aforementioned abstract Theorem~\ref{thAAA}}, which requires to construct suitable identification operators between
the Hilbert spaces $\L(\Omega\e)$ and $\L(\Omega)\oplus\C^m$, as well as between the energetic spaces associated with the quadratic form generating operators $\A\e$ and $\A$.

\subsection{Further literature on domains with resonators}

At the end of this section we give a brief overview of existing
literature concerning differential equations on domains 
perturbed via small resonators. Note that in some other works the authors, instead of  ``resonators'', 
use the names ``traps'' or ``accumulators''.
  
The interest in PDEs in domains with small resonators
arose in the early 90th  in connection with homogenization 
theory. It was observed that homogenizing Neumann problems in a domain perforated by a lot of
small resonators one  arrives on peculiar effective problems on an unperturbed domain. 
The first result was obtained by Khruslov in \cite{Khru89}, where
the linear heat equation was considered. The
effective equation in \cite{Khru89} contains a non-local in time term (model with memory). 
In the subsequent papers \cite{BCP99,BP97,Pa92,Ko95} non-linear PDEs were examined.
For more details we refer to the monograph \cite{MK06}.

One can also address another kind of homogenization problems, 
when resonators are not distributed inside a domain, 
but are attached along (a part of) its boundary.
We considered such a problem in \cite{CK15}, where homogenization 
of the Neumann spectral problem for the (weighted) Laplacian was studied.
The obtained effective problem contains spectral parameter 
both in the equation and the boundary conditions.

The remarkable applications of models with resonators is that they can be used to design materials with astonishing non-standard properties -- the so-called \emph{meta-materials}. Meta-materials consist 
of small components, and, 
even though the single component demonstrates ``standard'' behaviour, the meta-material behaves effectively in a way that is not known from ordinary materials.
Lamacz and Schweizer \cite{LS17}  studied 
acoustic properties of a medium partly filled by small resonators as on Figure~\ref{fig-Sch};
the mathematical model is the Helmholtz equation $-\Delta u\e=\omega^2 u\e$ subject to the Neumann conditions on the 
 resonators boundary (sound hard walls).
The obtained effective equation reads $-\nabla\cdot(A^*\nabla u)=\omega^2\Lambda(\omega) u$  with $A^*\gg 0$ and 
a frequency dependent coefficient $\Lambda(\omega)$ (dispersive medium). This coefficient can have negative values for some ranges of the frequency $\omega$, and it also  blows up when $\omega$ approaches the  resonance frequency of a single resonator (cf.~\cite{Sch15}).

Of course, Helmholtz-type resonators are not the only possible components for the design of meta-materials. One can also use, for example,  
wire structures (see, e.g., \cite{FB97}), 
split rings structures  (see, e.g., \cite{BS10}) or their mix. 
We refer to the overview \cite{Sch17} for more details and further references on this topic.

Finally, resonators constitute a good tool for opening of gaps in the spectrum of periodic differential operators.
Recall that the spectrum  of periodic self-adjoint differential operators
has the form of a locally finite union of compact intervals (\emph{bands}). 
An open interval on $\R$ is called a \emph{gap} 
if it has an empty intersection with the spectrum, but its endpoints belong to it.
The band structure of the spectrum suggests that gaps may exist in principle, but, in general, the presence of gaps is not guaranteed: two spectral bands may overlap, and then the corresponding gap disappears.
Existence of spectral gaps are of primary interest because of various applications, for example in physics of photonic crystals -- see, e.g., \cite{DLPSW11} for more details.
It was proven in \cite{Kh14,KK15} that the Neumann Laplacian 
on $\R^n$ with $m$ families of periodically distributed resonators has at least $m$ spectral gaps;
their location and lengths of can be  controlled by 
a suitable choice of the resonators sizes. 
Close results for waveguide-like domains with periodically attached 
resonators were obtained in \cite{CK17}.

\section{Setting of the problem and main results\label{sec:2}}
 
Let $n\in\N\setminus\{1\}$. In the following,
$x'=(x^1,\dots,x^{n-1})$ and $x=(x',x^n)$  stand for the Cartesian coordinates in $\R^{n-1}$ and $\R^{n}$, respectively.
For $r>0$ and $z\in\R^n$ we denote
$$\B(r,z)\ceq \left\{x\in\R^n:\ |x-z|<r\right\}.$$

Let $\Omega$  be a (not {necessarily} bounded)  domain in $\R^n$, $n\ge 2$.
Let $m\in\N$. We set   $$\M\ceq\{1,\dots,m\}.$$ 
Let
$B_k$, ${k\in\M}$ be  bounded Lipschitz domains in  $\R^{n}$. 
We assume that there exist positive numbers $\rho_k$, ${k\in\M}$  such that   
\begin{gather}
\label{DB1}
\forall k\in\M:\quad  \B(\rho_k,0)\cap B_k =  \B(\rho_k,0)\cap
\left\{x=(x',x^n)\in\R^n:\ x^n<0\right\},    
\end{gather}
whence, in particular, the boundary of $B_k$ is flat in the $\rho_k$-neighbourhood of the origin.
Further, let $\wt D_k$, ${k\in\M}$ be  bounded Lipschitz domains in  $\R^{n-1}$, and
\begin{gather}\label{Dk}
D_k\ceq \left\{x=(x',x^n)\in\R^n:\ x' \in  \wt D_k,\ x^n=0\right\}.
\end{gather}
We assume that 
\begin{gather}
\label{DB2}
\begin{array}{l}
\text{the smallest ball containing $D_k$ has center at the origin,}\\ 
\text{and its radius $\ell_k$ satisfies } 
\ell_k< \rho_k.
\end{array}
\end{gather}
In particular, it follows from \eqref{DB1}, \eqref{DB2} that ${D_k}\subset\partial B_k$.

Let $\eps>0$ be a small parameter. Let  $d\ke$, ${k\in\M}$ be positive numbers satisfying 
\begin{gather}\label{deps}
d\ke<\eps
\end{gather}
Below we specify $d\ke$ more precisely, see~\eqref{gamma:2}. Let $z_k$, ${k\in\M}$ be pairwise distinct points in $\Omega$. For $k\in\M$ we define
\begin{gather}\label{DBe}
B\ke\ceq\eps B_k+z_k,\quad
D\ke\ceq d\ke D_k+z_k.
\end{gather}
Due to \eqref{DB1}, \eqref{DB2}, \eqref{deps} 
we have  ${D\ke}\subset \partial B\ke$, and 
one has (see also Figure~\ref{fig-circles} in Section~\ref{sec:4}):
\begin{gather}\label{DBeps}
 \B(\rho_k\eps,z_k)\cap B\ke =  \B(\rho_k\eps,z_k)\cap
\left\{x=(x',x^n)\in\R^n:\ x^n<z_k^n\right\}, 
\end{gather}
where $z_k^n$ stands for the $x^n$-\,coordinate of $z_k$.
We assume that $\eps$ is sufficiently small so that 
\begin{gather*}
\overline{B\ke}\subset\Omega \quad\text{and}\quad
\overline{B\ke}\cap \overline{B_{l,\eps}}=\emptyset\text{ if }k\not=l.
\end{gather*}
Finally, we define the domain $\Omega\e$ (see Figure~\ref{fig-main}) by
\begin{gather}\label{Omega:e}
\Omega\e\ceq \Omega\setminus\left(\cupl_{k\in\M} S\ke\right),\text{ where }S\ke\ceq \partial B\ke\setminus D\ke.
\end{gather}

Now, we impose extra conditions on the sizes of the windows $D\ke$.
We assume that for each $k\in\M$ the  following limit 
exists and is finite:
\begin{gather}\label{gamma:1}
\gamma_k\ceq\liml_{\eps\to 0}\gamma\ke,\quad
\text{where }\gamma\ke\ceq \frac{\capty( {D\ke})}{4|B\ke|}.
\end{gather}
Here $|B\ke|$ stands for the volume of the domain $B\ke$, and 
$\capty({D\ke})$ stands for the capacity of the set $D\ke$;
the latter one is defined via
\begin{gather}\label{capty}
\capty(D\ke)=
\begin{cases}
\|\nabla H\ke\|_{\L(\R^n)}^2,&n\ge 3,\\
\|\nabla H\ke\|_{\L(\B(1,z_k))}^2,&n=2,
\end{cases}
\end{gather}
where $H\ke(x)$ is the solution to the problem
\begin{gather}\label{BVP:n3}
\begin{cases}
\Delta H(x)=0,&x\in\R^n\setminus\overline{D\ke},\\
H=1,&x\in\partial D\ke =\overline{D\ke},\\
H\to 0,&|x|\to\infty,
\end{cases}    
\end{gather}
if $n\ge 3$, or to the problem
\begin{gather}\label{BVP:n2}
\begin{cases}
\Delta H(x)=0,&x\in \B(1,z_k)\setminus\overline{D\ke},\\
H=1,&x\in\partial D\ke =\overline{D\ke},\\
H= 0,&x\in\partial \B( 1,z_k),   
\end{cases}    
\end{gather}
if $n=2$; here we assume that $\eps$ is sufficiently small in order to have  
$\overline{D\ke}\subset \B(1,z_k)$. Note that the capacity of $D\ke$ is positive despite its Lebesgue measure is zero.

One has the following asymptotics in the two-dimensional case 
\cite[Lemma~3.3]{CDG02}:
\begin{gather}\label{cap:rescaling1}
\capty( {D\ke})=2\pi|\ln d\ke|^{-1}(1+\al(\eps))\text{ with } \lim_{\eps\to 0}\al(\eps)= 0 ,\ n=2 ,
\end{gather}
while in higher dimensions, by using simple re-scaling arguments, we get
\begin{gather}\label{cap:rescaling2}
\capty( {D\ke})=(d\ke)^{n-2}\capty( {D_k}),\ n\geq 3.
\end{gather}
It follows from \eqref{gamma:1}, \eqref{cap:rescaling1}, \eqref{cap:rescaling2} that
\begin{gather}\label{gamma:2}
d\ke= C\e\eps^{\frac{n}{n-2}}\text{ if }n\ge 3
\qquad\text{and}\qquad
|\ln d\ke|^{-1}= C\e\eps^2\text{ if }n= 2,
\end{gather}
where $C\e=\mathcal{O}(1)$ as $\eps\to 0$. 
 
To define the  Laplace operator on $\Omega\e$ subject to the Neumann boundary conditions on $\cup_{k\in\M}S\ke$ and the Dirichlet  boundary  conditions on  $\partial\Omega$, we need an appropriate space, which we denote $\H_{0,\partial \Omega}^1(\Omega\e)$. {
We define it as the closure in $\H^1$ norm of the set consisting of functions from $\H^1(\Omega\e)$ 
vanishing in a neighborhood of $\partial\Omega$.}
If $\partial\Omega$ is sufficiently regular, one has
$$\H_{0,\partial \Omega}^1(\Omega\e)=\left\{u\in \H^1(\Omega\e):\  u\restr_{\partial\Omega}=0\right\},$$ 
where $u\restr_{\partial\Omega}$ is understood in the sense of traces.

Now, we can introduce the operator $\A\e$.
In  the Hilbert space $\L(\Omega\e)$ we define the sesquilinear form $\a\e$ by
\begin{equation}\label{ae}
\a\e[u,v]=\int_{\Omega\e}\nabla u\cdot\overline{\nabla v}\,\d x,\qquad 
\dom(\a\e)=\H_{0,\partial \Omega}^1(\Omega\e).
\end{equation}
The form $\a\e$ is densely defined in $\L(\Omega)$, nonnegative, and closed.
By the first representation
theorem (see, e.g. \cite[Chapter 6, Theorem 2.1]{Ka66}) there is a unique nonnegative
self-adjoint operator $\A\e$ in $\L(\Omega)$ associated with $\a\e$, i.e.,  
$\dom(\A\e)\subset\dom(\a\e)$ and  
\begin{equation*}
(\A\e u, v)_{\L(\Omega\e)}=\a\e[u,v],\quad \forall u\in \dom(\A\e),\,\,v\in\dom(\a\e).
\end{equation*}

Further, we define the  limiting operator $\A$. 
In the following, for $f\in\L(\Omega)\oplus \C^m$ we denote its
$\L(\Omega)$  and  $\C^m$ components by $f_0$ and $(f_k)_{k\in\M}$, respectively.
We define the sesquilinear form $\a$ in $\L(\Omega)\oplus \C^m$ by
\begin{gather}\label{a}
\ds\a[f,g]=\int_{\Omega}\nabla f_0\cdot\overline{\nabla g_0}\,\d x +
\suml_{k\in\M}\gamma_k f_k\overline{g_k},
\quad\dom(\a)= \H^1_0(\Omega)\times \C^m.
\end{gather}
This form is   densely defined in $\L(\Omega)\oplus \C^m$, nonnegative, and closed.
We denote by $\A$ the  nonnegative 
self-adjoint operator   in $\L(\Omega)\oplus \C^m$ associated with $\a$.
It is easy to see that 
\begin{gather}
\label{A:oplus}
\A=(-\Delta^D_\Omega)\oplus \Gamma\text{\quad and\quad}\sigma(\A)=\sigma(-\Delta_\Omega^D)\cup \{\gamma_k,\, k\in\M\},
\end{gather}
where 
$\Gamma:\C^m\to\C^m$ acts on $f=(f_k)_{k=1}^m\in\C^m$ by
$$\ds(\Gamma f)_k= \gamma_k f_k,\ k\in\M.$$
Our aim is to show that  $\sigma(\A\e)$ converges to  $\sigma(\A)$
in a suitable sense as $\eps\to 0$.
\smallskip

Recall that
for closed sets $X,Y\subset\R$  the \emph{Hausdorff distance} $d_H (X,Y)$ is given by
\begin{gather*}
d_H (X,Y)\ceq\max\left\{\sup_{x\in X} \inf_{y\in Y}|x-y|;\,\sup_{y\in Y} \inf_{x\in X}|y-x|\right\}.
\end{gather*}
The notion of convergence provided by this metric   is too restrictive for our purposes, since
the convergence of $\sigma(\A\e)$ to  $\sigma(\A)$  in  the metric $d_H(\cdot,\cdot)$ would mean  that  $\sigma(\A\e)$ and $\sigma(\A)$   look nearly the same \emph{uniformly in the whole  of $[0,\infty)$} -- a situation which is usually not guaranteed even if $\A\e$ converges to $\A$ 
in (a kind of) the norm  resolvent  topology.
Therefore, it is convenient to introduce the new metric $\widetilde{d}_H(\cdot,\cdot)$ via
\begin{gather*}
\widetilde{d}_H(X,Y)\ceq d_H( \overline{(1+X)^{-1}}, \overline{(1+Y)^{-1}}),\ X,Y\subset[0,\infty),
\end{gather*}
where  $(1+X)^{-1}\ceq {\{(1+x)^{-1}:\ x\in X\}}$, $(1+Y)^{-1}\ceq {\{(1+y)^{-1}:\ y\in Y\}}$.
With respect to this metric  two  spectra  can  be  close  even  if  they  differ  significantly  at  high energies. 

To  clarify  the  statement $\widetilde{d}_H(X\e,X)\to 0$  we introduce the following definition.

\begin{definition}
Let $X\e,X\in\R$. One says that 

\begin{itemize}
\item \emph{$X\e$ converges from inside to $X$} (we denote $X\e\nearrow X$)
if for any $ x\in X$ there exists a family $(x\e)_{\eps>0}$ with 
$x\e\in X\e$ such that $x\e\to x$ as $\eps\to 0$;

\item \emph{$X\e$ converges from outside to $X$} (we denote $X\e\searrow X$)
if for any $x\in\R\setminus X$ there exist $\delta>0$ 
such that $X\e\cap (x-\delta,x+\delta)=\varnothing$ for sufficiently small $\eps$.

\item \emph{$X\e$ converges  to $X$} (we denote $X\e\to X$)
if simultaneously $X\e\nearrow X$ and $X\e\searrow X$.
\end{itemize}
\end{definition}

\begin{remark}\label{rem:equiv}
It is easy to see that $X\e\searrow X$ iff
for any sequence $(x_{\eps_k})_{k\in\N}$ with $x_{\eps_k}\in X_{\eps_k}$   converging to some $x\in\R$ as $\eps_k\to 0$, one has $x\in X$.
\end{remark}

\begin{lemma}[{\cite[Lemma~A.2]{HN99}}]  \label{lemma:HN}
Let $(X\e)_{\eps>0}$ be a family of closed sets, $X\e\subset[0,\infty)$.
Let $X\subset [0,\infty)$ be a closed set. Then
$
\widetilde{d}_H(X\e,X)\to 0$ if and only if $X\e\to X$.
\end{lemma}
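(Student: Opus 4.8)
The plan is to transfer the statement to the compact interval $[0,1]$ by means of the strictly decreasing homeomorphism $F(t)\ceq(1+t)^{-1}$, and then to invoke the classical equivalence — valid for compact subsets of a compact metric space — between Hausdorff convergence and convergence of the set‑theoretic lower and upper limits. First I would compactify the half‑line, extending $F$ to a homeomorphism $F\colon[0,+\infty]\to[0,1]$ of compact metric spaces by $F(+\infty)\ceq0$. For a closed set $X\subset[0,\infty)$ let $\widehat X$ denote its closure in $[0,+\infty]$, so that $\widehat X=X$ when $X$ is bounded and $\widehat X=X\cup\{+\infty\}$ otherwise; in either case $F(\widehat X)=\overline{(1+X)^{-1}}$. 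Hence, writing $K\e\ceq F(\widehat{X\e})$ and $K\ceq F(\widehat X)$ — compact subsets of $[0,1]$ — one has $\widetilde d_H(X\e,X)=d_H(K\e,K)$ by the very definition of $\widetilde d_H$. Since $F$ is a homeomorphism between compact spaces, both $F$ and $F^{-1}$ are uniformly continuous, so the assertion is now a statement purely about the compact sets $K\e,K\subset[0,1]$.

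Next I would record the classical characterisation: for compact sets $K\e,K$ in a compact metric space, $d_H(K\e,K)\to0$ if and only if (a) every $y\in K$ is the limit of some family $(y\e)$ with $y\e\in K\e$; and (b) every $y$ that is the limit of some sequence $(y_{\eps_j})$ with $y_{\eps_j}\in K_{\eps_j}$ and $\eps_j\to0$ belongs to $K$. The implication ``$\Rightarrow$'' is immediate from $\dist(y,K\e)\le d_H(K\e,K)$ and $\dist(y_{\eps_j},K)\le d_H(K_{\eps_j},K)$ together with the closedness of $K$. For ``$\Leftarrow$'' one argues by contradiction: if $d_H(K_{\eps_j},K)\ge\delta>0$ along some $\eps_j\to0$, then for each $j$ there is either $y_j\in K_{\eps_j}$ with $\dist(y_j,K)\ge\delta$ or $z_j\in K$ with $\dist(z_j,K_{\eps_j})\ge\delta$; passing to a subsequence and using compactness — of the ambient space in the first case, of $K$ in the second (where one then invokes (a) to move points of $K\e$ close to $\lim z_j$) — one contradicts (b), respectively (a).

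It remains to identify (a) with $X\e\nearrow X$ and (b) with $X\e\searrow X$ through $F$. For $y\in K$ with $y>0$ we have $y=F(x)$ for a unique $x\ceq F^{-1}(y)\in X$, and a family $y\e\in K\e$ satisfies $y\e\to y$ if and only if, for small $\eps$, $y\e=F(x\e)$ with $x\e\in X\e$ and $x\e=F^{-1}(y\e)\to x$ (conversely $x\e\to x$ yields $F(x\e)\to F(x)$); ranging over $x\in X$, this says that (a) restricted to $y>0$ is precisely $X\e\nearrow X$. Likewise, a sequence $y_{\eps_j}\in K_{\eps_j}$ with $y_{\eps_j}\to y>0$ corresponds to $x_{\eps_j}\ceq F^{-1}(y_{\eps_j})\in X_{\eps_j}$ converging in $\R$ to $F^{-1}(y)$, so by Remark~\ref{rem:equiv} the part of (b) with $y>0$ is precisely $X\e\searrow X$. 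The only remaining value is $y=0$ — which lies in $K$ exactly when $X$ is unbounded, and which corresponds to the adjoined point $+\infty$: (a) at $y=0$ follows from $X\e\nearrow X$ by a diagonal choice of scales along a sequence $x^{(N)}\in X$ with $x^{(N)}\to+\infty$ and corresponding approximants $x\e^{(N)}\in X\e$, while (b) at $y=0$ amounts to the requirement that no sequence $x_{\eps_j}\in X_{\eps_j}$ may escape to $+\infty$ unless $X$ itself is unbounded. Combining the cases yields $d_H(K\e,K)\to0\iff X\e\nearrow X\text{ and }X\e\searrow X\iff X\e\to X$.

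The routine part is the translation dictionary of the third step. The genuinely delicate point — the one I would treat most carefully — is the behaviour at the adjoined point $+\infty$, i.e. the possibility that part of $X\e$ runs off to infinity while $X$ remains bounded; the compactification introduced in the first step is exactly the device that keeps track of this, and it makes transparent that for the stated equivalence to hold the ``convergence from outside'' must be understood as also preventing $X\e$ from accumulating near $+\infty$ in $[0,+\infty]$ — equivalently, near $0$ after applying $F$.
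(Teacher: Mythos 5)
The paper gives no proof of this lemma; it is quoted from \cite[Lemma~A.2]{HN99}, so there is nothing internal to compare against. Your strategy --- compactify $[0,\infty)$ to $[0,+\infty]$ via $F(t)=(1+t)^{-1}$, invoke the classical Kuratowski description of Hausdorff convergence of compacta, and translate back through $F^{-1}$ --- is the natural one and is almost certainly what the cited source does. The direction $\widetilde{d}_H(X\e,X)\to 0\Rightarrow X\e\to X$ and the identification (a)~$\iff X\e\nearrow X$ (including the diagonal step at $y=0$) are handled correctly.

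There is, however, a genuine gap in the converse at the adjoined point $y=0$, one that your own closing sentence half-concedes. You correctly identify that condition (b) at $y=0$ is the requirement that no sequence $x_{\eps_j}\in X_{\eps_j}$ may escape to $+\infty$ while $X$ stays bounded, and you then assert ``combining the cases yields $d_H(K\e,K)\to 0\iff X\e\nearrow X$ and $X\e\searrow X$'' --- but you never derive (b) at $y=0$ from $X\e\nearrow X$ and $X\e\searrow X$, and in fact it does not follow. As Remark~\ref{rem:equiv} makes explicit, $X\e\searrow X$ constrains only sequences converging to a \emph{real} limit and is silent about sequences tending to $+\infty$, while $X\e\nearrow X$ cannot forbid spurious points of $X\e$. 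Concretely, $X=\{0\}$ and $X\e=\{0,1/\eps\}$ satisfy both $X\e\nearrow X$ and $X\e\searrow X$, yet $\widetilde{d}_H(X\e,X)=d_H(\{1,\eps/(1+\eps)\},\{1\})=1/(1+\eps)\to 1\neq 0$. So the proof as written does not establish $X\e\to X\Rightarrow\widetilde{d}_H(X\e,X)\to 0$, and this step cannot be repaired without either adding the hypothesis that $X$ is unbounded (equivalently that $0\in K$), or --- as your final remark suggests --- strengthening the reading of $\searrow$ so that it also forbids accumulation at $+\infty$. Those two statements of yours (the asserted biconditional and the caveat that $\searrow$ ``must be understood'' more strongly) are in tension: one of them has to give. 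For context, in the paper's applications $X\e=\sigma(\A\e)$ and $X=\sigma(\A)$ are spectra of unbounded self-adjoint operators, hence unbounded, so $0\in K\e\cap K$ for all $\eps$ and the problem never arises; moreover only the unproblematic direction $\widetilde{d}_H\to 0\Rightarrow X\e\to X$ is used in the proof of Theorem~\ref{th1}.
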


We are now in position to formulate the main convergence results. 
In what follows, by $C,C_1,C_2,\dots$ we denote generic positive constants being independent of $\eps$.

\begin{theorem}\label{th1}
One has
$\sigma(\A\e)\to \sigma(\A)$ as $\eps\to 0$, and  the estimate
\begin{gather} \label{th1:est}
\widetilde{d}_H\left(\sigma(\A\e),\sigma(\A)\right)\leq 
\begin{cases}
\ds C \sum_{k\in\M}|\ga\ke-\ga_k|+C\eps ,&n\ge 3,\\
\ds C \sum_{k\in\M}|\ga\ke-\ga_k|+C\eps|\ln\eps|^{3/2} ,&n=2,
\end{cases}
\end{gather}
holds for sufficiently small $\eps$.
\end{theorem}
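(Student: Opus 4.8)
The plan is to establish the convergence $\sigma(\A\e)\to\sigma(\A)$ together with the quantitative estimate \eqref{th1:est} by showing that $\A\e$ converges to $\A$ in the \emph{generalized norm-resolvent sense} in the spirit of \cite{P06,P12,KP21}, and then invoking the general principle that norm-resolvent convergence with an explicit rate yields Hausdorff-type spectral convergence with a comparable rate (via Lemma~\ref{lemma:HN}, recasting everything through the transform $\lambda\mapsto(1+\lambda)^{-1}$). Concretely, I would first construct identification operators $\J\e\colon\L(\Omega)\oplus\C^m\to\L(\Omega\e)$ and $\J\e^1$ between the energetic spaces $\dom(\a)=\H^1_0(\Omega)\times\C^m$ and $\dom(\a\e)=\H^1_{0,\partial\Omega}(\Omega\e)$. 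The natural choice: on the $\L(\Omega)$-component, $\J\e f_0$ is essentially $f_0$ restricted to $\Omega\e$ (the sets $S\ke$ having measure zero, this is harmless), suitably cut off near each $z_k$ so as to vanish on a small ball containing $B\ke$; on the $\C^m$-component, $f_k$ is sent to the function on $\Omega\e$ that equals the constant $f_k$ on the resonator $B\ke$ (normalized by $|B\ke|^{-1/2}$ so as to be an $\L^2$-isometry on that piece) and is glued to $0$ outside via the harmonic capacitary profile $H\ke$ through the window $D\ke$. The gluing region is an $\mathcal{O}(\eps)$-neighbourhood of $z_k$, where by \eqref{DBeps} the geometry is a flat half-space perforated by a tiny hyperplanar window, so one has explicit control of $H\ke$ and its Dirichlet energy via \eqref{capty}--\eqref{cap:rescaling2}.

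The key computational steps, in order: (i) verify $\J\e$ is a \emph{quasi-unitary} identification, i.e. $\|\J\e f\|^2_{\L(\Omega\e)}=\|f\|^2+\mathcal{O}(\text{rate})\|f\|^2$ and an adjoint-type estimate, with the error coming from the cutoff near the $z_k$ (volume $\mathcal{O}(\eps^n)$ for the $f_0$-part) and from the capacitary profile contributing $\mathcal{O}(|B\ke|)=\mathcal{O}(\eps^n)$ in the $L^2$-norm; (ii) show $\J\e^1$ is compatible with $\J\e$ and nearly bounded between energetic spaces; (iii) the crucial \emph{form-matching} estimate
\begin{gather*}
\bigl|\a\e[\J\e^1 f,\J\e^1 g]-\a[f,g]\bigr|\le \delta\e\,\|f\|_{\a}\|g\|_{\a},
\end{gather*}
where the diagonal term $\sum_k\gamma_k f_k\overline{g_k}$ is reproduced precisely because $\int_{\text{gluing region}}|\nabla(f_k H\ke/|B\ke|^{1/2})|^2 = |f_k|^2\,\capty(D\ke)/(4|B\ke|)=\gamma\ke|f_k|^2$ — this is exactly where the factor $1/4$ and the ratio capacity/volume enter, and the discrepancy $|\gamma\ke-\gamma_k|$ produces the first term in \eqref{th1:est}; the geometric error terms (cross terms between $f_0$ and the resonator functions, the passage from the model half-space to the true domain, the $n=2$ logarithmic subtleties from \eqref{cap:rescaling1}) produce the $C\eps$ resp. $C\eps|\ln\eps|^{3/2}$ contributions. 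Then one assembles these into $\|(\A\e+1)^{-1}\J\e-\J\e(\A+1)^{-1}\|\le C\delta\e$ by the standard abstract argument (test against $\a\e$, $\a$ and use quasi-unitarity), and converts this to $\widetilde d_H(\sigma(\A\e),\sigma(\A))\le C\delta\e$.

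The main obstacle I expect is the form-matching estimate (iii), specifically the careful handling of the window region. One must show that replacing the true capacitary function by the explicitly solvable half-space model costs only $\mathcal{O}(\eps)$ relative error in the Dirichlet energy — this requires comparing $H\ke$ on $\B(\rho_k\eps,z_k)$ (where the geometry is exactly flat by \eqref{DBeps}) with its far-field behaviour and controlling the matching across $\partial\B(\rho_k\eps,z_k)$; in dimension $n=2$ the slow logarithmic decay of capacitary potentials makes the bookkeeping delicate and is the source of the extra $|\ln\eps|^{3/2}$. A second, more technical point is the correct definition of $\J\e^1$ on $\H^1_0(\Omega)$ near $z_k$: the cutoff must kill $f_0$ on $\overline{B\ke}$ without destroying the $\H^1$-estimate, which forces the use of a logarithmic (for $n=2$) or power-type cutoff whose gradient energy is $o(1)$ — again routine for $n\ge 3$ but requiring care for $n=2$, consistent with the two cases in \eqref{th1:est}. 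Once these local estimates are in place, the global assembly and the passage from norm-resolvent convergence to Hausdorff spectral convergence are standard and follow the scheme recalled in Section~\ref{sec:3}.
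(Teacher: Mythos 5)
Your plan is essentially the paper's proof: construct identification operators $\J\e,\wt\J\e$ between $\L(\Omega)\oplus\C^m$ and $\L(\Omega\e)$ and their energetic-space versions $\J\e^1,\wt\J\e^1$, verify the quasi-unitarity and form-compatibility conditions of the Post/KP scheme (Theorems~\ref{thA1}--\ref{thA3}), and deduce generalized norm-resolvent convergence with rate, which then yields Hausdorff spectral convergence by Theorem~\ref{thA2}. The paper's $\J\e^1$ glues the constant $|B\ke|^{-1/2}f_k$ inside $B\ke$ to zero outside using $\frac12 H\ke\phi\ke$ (plus a separate cutoff $\wh\phi\ke$ to kill $f_0$ near the window), and the factor $\frac14$ in $\gamma\ke$ comes precisely from the $\frac12$ in that profile. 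One small imprecision in your sketch: as written, $\int|\nabla(f_k H\ke/|B\ke|^{1/2})|^2=|f_k|^2\,\capty(D\ke)/|B\ke|$, not $\capty(D\ke)/(4|B\ke|)$; you need the $\tfrac12 H\ke$ normalization (so that the trace on $D\ke$ splits symmetrically between inside and outside) to produce the $\frac14$. Also note that the abstract machinery actually uses an asymmetric form-matching condition, $|\a\e[u,\J\e^1 f]-\a[\wt\J\e^1 u,f]|\le\delta\e\|f\|_{\HS^2}\|u\|_{\HS\e^1}$ (condition~\eqref{thA3:3}), rather than the symmetric $|\a\e[\J\e^1 f,\J\e^1 g]-\a[f,g]|$ you wrote, since $\a$ and $\a\e$ live on different spaces; this is where the integration by parts against $H\ke$ and the Green identity~\eqref{cap:D} enter, extracting $\capty(D\ke)$ from the normal-derivative jump across $D\ke$ rather than directly from the Dirichlet integral.
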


As usual, we denote by
$\sigma_{\ess}(\cdot)$ and $\sigma_{\disc}(\cdot)$  the  essential and the discrete spectra of a self-adjoint operator.
Using standard methods of perturbation theory, one infers that inserting 
finitely many compact Lipschitz surfaces $S\ke$ inside $\Omega$ and posing the Neumann boundary conditions on them, one does not change the essential spectrum of the Laplacian, i.e. 
\begin{gather}\label{essAA}
\sigma_\ess(\A\e)=\sigma_\ess(-\Delta_\Omega^D)=\sigma_\ess(\A)
\end{gather}
(the second equality in \eqref{essAA} follows from \eqref{A:oplus}). For  the discrete spectrum of
$\A\e$ we have the following result.

\begin{theorem}\label{th2}
One has 
$
\sigma_\disc(\A\e)\nearrow \sigma_\disc(\A)\text{ as }\eps\to 0.$
The multiplicity is preserved {in the following sense}:
if $\lambda\in\sigma_\disc(\A)$ is of multiplicity $\mu$ and
$[\lambda-L,\lambda+L]\cap\sigma(\A)=\{\lambda\}$ with $L>0$, then for sufficiently small $\eps$
  the spectrum of $\A\e$ in $[\lambda-L,\lambda+L]$ is purely discrete and 
 the total multiplicity of the eigenvalues of $\A\e$ contained in $[\lambda-L,\lambda+L]$ equals $\mu$. 
 
 If, in addition, $\mu=1$ (i.e.  the eigenvalue $\lambda$ is simple), and  $\psi=(\psi_0,\psi_1,\dots,\psi_m)$ with $\psi_0\in\L(\Omega)$ and $(\psi_k)_{k\in\M}\in \C^m$ is the corresponding normalized in ${\L(\Omega)\oplus\C^m}$ eigenfunction,  
 then there exists a sequence of   normalized in $\L(\Omega\e)$ eigenfunctions $\psi\e$ of $\A\e$  such that 
\begin{gather*} 
\|\psi\e - \psi_0\|^2_{\L(\Omega\e\setminus \overline{\cup_{k\in\M}B\ke})}+
\suml_{k\in\M}\|\psi\e - |B\ke|^{-1/2}\psi_k\|^2_{\L(B_{k,\eps})} \to 0.
\end{gather*}
\end{theorem}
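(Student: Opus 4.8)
The plan is to establish the generalized norm-resolvent convergence of $\A\e$ to $\A$ and then extract the spectral consequences (convergence of spectra and eigenfunctions) from the abstract machinery recalled in Section~\ref{sec:3}. The first step is to construct identification operators: a linear map $\J\e\colon\L(\Omega)\oplus\C^m\to\L(\Omega\e)$ at the level of the Hilbert spaces, and a compatible map $\wt\J\e^1\colon\dom(\a)\to\dom(\a\e)$ at the level of the energetic (form) spaces. The natural choice is to let $\J\e$ act on the $\L(\Omega)$-component essentially by restriction to $\Omega\e\setminus\overline{\cup_k B\ke}$, suitably cut off near the resonator surfaces $S\ke$, and to spread each scalar $f_k$ as the constant $|B\ke|^{-1/2}f_k$ over $B\ke$ (this is exactly the pairing that appears in the eigenfunction-convergence statement of Theorem~\ref{th2}). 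For $\wt\J\e^1$ one takes the same recipe but must interpolate, inside a thin transition layer around each $S\ke$ and through the window $D\ke$, between the value of $f_0$ near $z_k$ and the constant $|B\ke|^{-1/2}f_k$; the harmonic capacity potential $H\ke$ from \eqref{BVP:n3}--\eqref{BVP:n2}, rescaled to the window, is the right profile to use for this interpolation, and the Dirichlet energy of that transition is precisely what produces the term $\suml_k\gamma_k|f_k|^2$ in the limit form via \eqref{gamma:1}.

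Next I would verify the four (or so) quantitative conditions required by the abstract generalized norm-resolvent convergence criterion of \cite{P06,P12,KP21} (Definition~\ref{def:gnrc}): that $\J\e$ is almost unitary, i.e. $\|\J\e f\|_{\L(\Omega\e)}^2\to\|f\|^2$ with an explicit rate; that $\J\e$ and $\wt\J\e^1$ are almost compatible with the two forms, i.e. $\a\e[\wt\J\e^1 f]\to\a[f]$ with a rate; that $\J\e$ and $\wt\J\e^1$ are close to each other in the $\L$-norm on form domains; and the analogous "dual" or "surjectivity" conditions coming from a map $\J\e'$ going the other way (restricting a function on $\Omega\e$, averaging it over each $B\ke$ to produce the $\C^m$-component). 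Each of these reduces to a handful of estimates localized near the resonators: the volume of the transition layer is $\mathcal{O}(\eps^n)$, the $\L^2$-mass of $f_0$ there is controlled by $\eps^n\|f_0\|_{\H^1}^2$-type bounds after using that $f_0\in\H^1_0(\Omega)$ is continuous-in-mean near $z_k$, and the key Dirichlet-energy computation on the window uses the capacity asymptotics \eqref{cap:rescaling1}--\eqref{cap:rescaling2} together with the scaling $d\ke=C\e\eps^{n/(n-2)}$ (resp. the logarithmic scaling for $n=2$) from \eqref{gamma:2}. Tracking the errors carefully yields the bound $\sum_k|\ga\ke-\ga_k|+\mathcal{O}(\eps)$ for $n\ge3$ and the extra $|\ln\eps|^{3/2}$ factor for $n=2$, which is the content of \eqref{th1:est}; the logarithmic loss in dimension two comes from the slow $|\ln d\ke|^{-1}$ decay of the capacity and the $\L^2$-estimate of the logarithmic capacity potential on an annulus.

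With generalized norm-resolvent convergence in hand, Theorem~\ref{th1} follows from the abstract fact that such convergence implies Hausdorff convergence of spectra in the metric $\wt d_H$ — equivalently $\sigma(\A\e)\to\sigma(\A)$ in the sense $\nearrow$ and $\searrow$ of the Definition, using Lemma~\ref{lemma:HN} — with the rate inherited from the resolvent estimate. For Theorem~\ref{th2}: the inclusion $\sigma_\disc(\A\e)\nearrow\sigma_\disc(\A)$ and the preservation of total multiplicity on an interval $[\lambda-L,\lambda+L]$ isolating an eigenvalue of multiplicity $\mu$ is the standard consequence of norm-resolvent convergence of spectral projections (the Riesz projections $\tfrac{1}{2\pi i}\oint(\zeta-\A\e)^{-1}\d\zeta$ converge to that of $\A$ in the generalized sense, hence have equal rank for small $\eps$); here one also invokes \eqref{essAA} to know the spectrum in that window is purely discrete. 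Finally, when $\mu=1$, the convergence of the one-dimensional spectral projections gives a normalized eigenfunction $\psi\e$ of $\A\e$ with $\psi\e\approx\J\e\psi$ in $\L(\Omega\e)$; unwinding the definition of $\J\e$ — restriction of $\psi_0$ away from the resonators and the constant $|B\ke|^{-1/2}\psi_k$ inside $B\ke$ — produces exactly the displayed convergence $\|\psi\e-\psi_0\|^2_{\L(\Omega\e\setminus\overline{\cup_k B\ke})}+\suml_k\|\psi\e-|B\ke|^{-1/2}\psi_k\|^2_{\L(B_{k,\eps})}\to0$, after discarding the negligible transition layers.

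The main obstacle I expect is the construction and energy estimate of the form-level identification operator $\wt\J\e^1$ through the tiny window: one must glue a bulk $\H^1$ function on $\Omega$ to a locally constant function inside $B\ke$ across a hole of size $d\ke\ll\eps$, keeping the resulting function in $\H^1_{0,\partial\Omega}(\Omega\e)$ with Dirichlet energy converging to the capacity term and with a sharp remainder. This is where the precise choice of the transition profile (the rescaled capacity potential), the geometric flatness assumptions \eqref{DB1}--\eqref{DBeps}, and the scaling relation \eqref{gamma:2} all have to be used in concert, and it is also the step responsible for the dimension-dependent error rates; the two-dimensional case, with its logarithmic capacity, requires separate and slightly more delicate bookkeeping.
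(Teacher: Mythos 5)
Your proposal follows essentially the same route as the paper: construct identification operators on the Hilbert-space and form levels (with the rescaled capacity potential $H\ke$ as the transition profile through the windows), verify the quantitative conditions for generalized norm-resolvent convergence from \cite{P06,P12,KP21}, and deduce the spectral inclusion, multiplicity preservation, and eigenfunction convergence from the abstract Theorems~\ref{thA1}--\ref{thA3}. Two small notational slips, neither affecting the substance: the form-level operator you call $\wt\J\e^1$ (built from $H\ke$, mapping $\dom(\a)\to\dom(\a\e)$) is the paper's $\J\e^1$, while the paper's $\wt\J\e^1$ goes the other way and relies on the extension operator of Lemma~\ref{lm:ext}; and the paper's $\J\e$ is a bare restriction-plus-spreading with no cutoffs — the cutoffs appear only in $\J\e^1$.
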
 

\begin{remark}Let  $\lambda$ be a simple eigenvalue of $\A$, and let $\lambda\e\in\sigma_\disc(\A\e)$ with $\lambda\e\to\lambda$ as $\eps\to 0$. The second part of
Theorem~\ref{th2} asserts that the normalized eigenfunction, which corresponds to $\lambda\e$, concentrates on ${\L(\Omega\e\setminus \overline{\cup_{k\in\M}B\ke})}$
if $\lambda$ is an eigenvalue of $-\Delta_\Omega^D$ and concentrates on $B\ke$ if $\lambda=\gamma_k$ for some $k\in\M$.
\end{remark}

\begin{remark}  
If $\Omega$ is a  bounded domain, the spectra of $\A\e$ and $\A$ are purely discrete;
hence, due to Theorem~\ref{th1}, additionally to $
\sigma_\disc(\A\e)\nearrow \sigma_\disc(\A)$  we also have 
$\sigma_\disc(\A\e)\searrow\sigma_\disc(\A)$ as $\eps\to 0$.
However,  if  $\Omega$ is unbounded, 
the latter property does not  {necessarily} hold true: it may happen that
there exists a sequence $(\lambda_{\eps_k})_{k\in\N}$ with $\lambda_{\eps_k}\in\sigma_{\disc}(\A_{\eps_k})$ converging to  $\lambda_0\in\sigma_{\ess}(\A)$ as $\eps_k\to 0$. 
In Subsection~\ref{subsec4:4} we present an example demonstrating this. 
\end{remark}

From Theorems~\ref{th1} and \ref{th2} one can easily get the following corollary.

\begin{corollary}\label{coro:bounded}
Let $\Omega$ be a bounded domain (consequently, the spectra of $\A\e$ and $\A$ are purely discrete).
Then $\forall k\in\N$  one has $\lambda_k(\A\e)\to\lambda_k(\A)$ as $\eps\to 0$.
\end{corollary}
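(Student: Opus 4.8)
\emph{Proof proposal.} The plan is to deduce everything from Theorems~\ref{th1} and \ref{th2} by a counting argument, after first noting that both spectra are purely discrete. For the discreteness: since $\Omega$ is bounded, so is $\Omega\e=\Omega\setminus\cupl_{k\in\M}S\ke$, and the form $\a\e$ has a domain compactly embedded into $\L(\Omega\e)$. Indeed, writing $u\in\H^1_{0,\partial\Omega}(\Omega\e)$ as $u=\psi u+(1-\psi)u$ with the cut-off $\psi$ from the definition of $\H^1_{0,\partial\Omega}(\Omega\e)$, one has $\psi u\in\H^1_0(\Omega)$ while $(1-\psi)u$ belongs to $\H^1\bigl(\Sigma'\setminus\cupl_{k\in\M}S\ke\bigr)$, a fixed bounded domain whose boundary is Lipschitz outside the relatively closed $(n-2)$-dimensional edge $\cupl_{k\in\M}\partial D\ke$; in both cases the Rellich--Kondrachov embedding into $\L$ is compact. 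Hence $\A\e$ has compact resolvent and $\sigma(\A\e)$ is purely discrete; likewise $\A=(-\Delta_\Omega^D)\oplus\Gamma$ is the orthogonal sum of an operator with compact resolvent (as $\Omega$ is bounded) and a finite Hermitian matrix, so $\sigma(\A)$ is purely discrete. Both spectra lie in $[0,\infty)$ since $\a\e$ and $\a$ are nonnegative. It now suffices to fix $k\in\N$ and show that for every $L>0$ one has $|\lambda_k(\A\e)-\lambda_k(\A)|\le L$ for all sufficiently small $\eps$.

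To set up the count, let $\mu_1<\mu_2<\dots$ be the pairwise distinct elements of $\sigma(\A)$, with $\mu_j$ of multiplicity $m_j$, so that $\lambda_i(\A)=\mu_j$ precisely when $m_1+\dots+m_{j-1}<i\le m_1+\dots+m_j$. Choose $J\in\N$ with $m_1+\dots+m_J\ge k$, and then shrink $L>0$ so that the closed intervals $I_j\ceq[\mu_j-L,\mu_j+L]$, $j=1,\dots,J$, are pairwise disjoint, satisfy $I_j\cap\sigma(\A)=\{\mu_j\}$, and $\mu_{J+1}>\mu_J+L$. By Theorem~\ref{th2} applied with $\lambda=\mu_j$, for each $j\le J$ and all small $\eps$ the part of $\sigma(\A\e)$ inside $I_j$ is finite with total multiplicity exactly $m_j$. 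On the other hand, the set $K\ceq[0,\mu_J+L]\setminus\cupl_{j=1}^J(\mu_j-L,\mu_j+L)$ is compact and disjoint from $\sigma(\A)$; covering $K$ by finitely many intervals and invoking $\sigma(\A\e)\searrow\sigma(\A)$ (which holds by Theorem~\ref{th1}), we obtain $\sigma(\A\e)\cap K=\varnothing$ for all small $\eps$.

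Combining the two facts, for small $\eps$ the set $\sigma(\A\e)\cap[0,\mu_J+L]$ consists of exactly $m_1+\dots+m_J$ eigenvalues counted with multiplicity, of which the $m_j$ that lie in $I_j$ form, because $I_1<I_2<\dots<I_J$, the block $\lambda_{m_1+\dots+m_{j-1}+1}(\A\e),\dots,\lambda_{m_1+\dots+m_j}(\A\e)$. In particular $\lambda_k(\A\e)$ lies in the interval $I_j$ with $m_1+\dots+m_{j-1}<k\le m_1+\dots+m_j$, i.e.\ with $\mu_j=\lambda_k(\A)$, so $|\lambda_k(\A\e)-\lambda_k(\A)|\le L$; since $L>0$ was arbitrary, $\lambda_k(\A\e)\to\lambda_k(\A)$. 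I do not expect a genuine obstacle here: once Theorems~\ref{th1} and \ref{th2} are in hand the argument is pure bookkeeping. The only point that needs a little care is upgrading the pointwise statement in the definition of $\sigma(\A\e)\searrow\sigma(\A)$ to the assertion $\sigma(\A\e)\cap K=\varnothing$ uniformly for small $\eps$, which is exactly why $K$ must be taken compact; all the analytic content is already carried by Theorems~\ref{th1} and \ref{th2}.
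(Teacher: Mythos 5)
Your proof is correct. The paper itself gives no argument for this corollary (it merely says it ``follows easily'' from Theorems~\ref{th1} and~\ref{th2}), and what you have written is precisely the standard counting argument that phrase elides: pin down $\sigma(\A\e)$ near each of the first $J$ distinct eigenvalues of $\A$ with the right multiplicity via Theorem~\ref{th2}, exclude the compact complement in $[0,\mu_J+L]$ via a finite subcover and the $\searrow$ part of Theorem~\ref{th1}, then read off the $k$th eigenvalue by the resulting block structure. The one place you are slightly informal is the compactness of the embedding $\H^1_{0,\partial\Omega}(\Omega\e)\hookrightarrow\L(\Omega\e)$: the appeal to ``boundary Lipschitz outside a codimension-two edge'' is not quite a textbook hypothesis for Rellich--Kondrachov. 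It is cleaner, and fully rigorous, to decompose $\Omega\e$ into the finitely many Lipschitz pieces $B\ke$, $k\in\M$, and $\Omega\e^{\rm out}=\Omega\setminus\overline{\cup_k B\ke}$, apply compactness of $\H^1\hookrightarrow\L$ on each $B\ke$ and on $\Omega\e^{\rm out}$ (using $\psi u\in\H^1_0(\Omega)$ with $\Omega$ bounded to handle the outer boundary, and the Lipschitz surfaces $\partial B\ke$ for the inner boundary), and pass to a common subsequence; then the window edges $\partial D\ke$ play no role at all. This is a cosmetic improvement, not a gap.
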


\begin{remark}\label{rem:Neumann}
The choice of the boundary conditions on the external boundary of $\Omega\e$ (i.e., on $\partial\Omega$)  plays no essential role in our analysis.
If one prescribes the Neumann, Robin  or mixed boundary conditions, all convergence results (Theorems~\ref{th1}, \ref{th2} and Corollary~\ref{coro:bounded}) remains the same, but 
with $-\Delta_\Omega^D$ being replaced by the Laplacian on $\Omega$ subject to the Neumann, Robin or mixed boundary conditions, respectively. Up to a few simple technical details the proofs are similar to the Dirichlet case, and to simplify the presentation we omit them.
\end{remark}

{ 
\begin{remark} 
It is interesting to compare our formula \eqref{gamma:0} for the resonance eigenvalues with the one obtained in \cite{Sch15},  see \eqref{gamma:Sch}. In both formulae one has the resonator volume in the denominator, while the numerators look very different at first glance. However, it turns out that both numerators can be written in a uniform way in terms of a certain quantity characterizing conductivity of the set through which the resonator is connected to the outer space.
Below we briefly explain the idea.

For the sake of simplicity, we consider the case $n=2$ only.
We start from  the resonator $B\e\cong \eps B$  connected to the outer space through the window
$$D\e=\left\{x=(x^1,x^2)\in\R^2:\ |x^1|< {d\e/2},\ x^2=0\right\}.$$ 
We introduce the domain $O\e$ consisting of two unit half-discs coupled via
$D\e$, namely,
$$O\e \ceq O^+\cup O^-\cup D\e,\text{ where }O^\pm\ceq \B(1,0)\cap\left\{x=(x^1,x^2)\in\R^2:\ \pm x^2>0\right\},$$
and let $W\e$ be a harmonic in $O\e$ function satisfying the following condition on $\partial O\e$:
\begin{align*}
	\begin{array}{lll}
		W\e(x)=\pm {1\over 2},& x\in \partial O^\pm\text{ with }x^2\not=0&\quad\text{(curved parts of $\partial O^\pm$)},\\[2mm]
		\left.{\partial W\e\over\partial x^n}\right|_{x^n=\pm 0}=0,& x\in \partial O^\pm\setminus D\e\text{ with }x^2=0&\quad\text{(rest of $\partial O\e$)}.
	\end{array}
\end{align*}  
Then we define the conductivity $\mathscr{C}(D\e)$ of the window $D\e$ by 
\begin{gather}\label{D:cond} 
	\mathscr{C}(D\e)=
	\|\nabla W\e\|_{\L(O\e)}^2.
\end{gather}
By using the properties \eqref{H:symm}--\eqref{H:n2} below one can easily show
that 
\begin{gather}\label{WH}
W\e(x)=\pm (1- H\e(x))/2\text{ for }x\in O^\pm,
\end{gather}
where $H\e$ is the solution to  \eqref{BVP:n2} (with $D\e$ instead of $D\ke$ and $z_k=0$).
It follows from \eqref{capty}, \eqref{D:cond}, \eqref{WH} that
$
\mathscr{C}(D\e)=\capty(D\e)/4;
$
consequently, the formula for the resonant eigenvalue take the form
\begin{gather}\label{gamma:conduct}
\gamma=\lim_{\eps\to 0}{\mathscr{C}(D\e)\over |B\e|}.
\end{gather}

Now, we turn to the resonator $B\e\cong \eps B$, which is connected to the outer space through 
the channel  (see Figure~\ref{fig-Sch})
$$P\e=\left\{x=(x^1,x^2)\in\R^2:\ |x^1|< {A\e/2},\ |x^2|\le {L\e/2}\right\},$$
We choose  the passage  length $L\e$ and  
its cross-section diameter   $A\e$   as in \cite{Sch15}:
\begin{gather} \label{LALA}
 L\e\sim L\eps,\quad A\e\sim A\eps^3,\quad A,\,L>0.
\end{gather}
Similarly to \eqref{D:cond}, 
we define the conductivity $\mathscr{C}(P\e)$ of $P\e$   by
\begin{gather*} 
	\mathscr{C}(P\e)=
	\|\nabla \wt W\e\|_{\L(\wt O\e)}^2,
\end{gather*}
where the set $\wt O\e$ consists of two unit half-discs connected through  $P\e$, namely,
$$\wt O\e = \wt O^+\cup \wt O^-\cup P\e,\quad 
\wt O^\pm=\{x=(x^1,x^2)\in\R^2:\ (x^1,x^2\mp L\e/2)\in O^\pm\},$$
and $\wt W\e$ is a harmonic in $\wt O\e$ function being equal to $\pm{1\over 2}$
on the curved part of $\partial 
\wt O^\pm$, and the Neumann conditions
on the rest of $\partial \wt O\e$.  One can prove that under 
the assumptions \eqref{LALA}  the following holds true:
$$	\|\nabla \wt W\e\|_{\L(\wt O\e)}^2\sim \|\nabla \wt U\e\|_{\L(\wt O\e)}^2={A\e/L\e},\text{ where } \wt U\e(x^1,x^2)
\ceq
\begin{cases}
\pm1/2,&x\in \wt O^\pm,	\\
x^2/L\e,&x\in P\e.
\end{cases}
$$
Hence the formula 
 \eqref{gamma:Sch} for the resonant eigenvalue also takes the form  \eqref{gamma:conduct}, but
with $\mathscr{C}\e(P\e)$ instead of $\mathscr{C}\e(D\e)$.
\end{remark}
 
}

\section{{Abstract theorem}\label{sec:3}}

Let $(\HS\e)_{\eps>0}$ be a family of Hilbert spaces, 
$(\A\e)_{\eps>0}$ be a family of non-negative, self-adjoint, unbounded operators in $\HS\e$, $(\a\e)_{\eps>0}$ be a family of associated 
sesquilinear forms. Also, let $\HS$ be a Hilbert space, {$\A$} be a non-negative, self-adjoint, unbounded operator in $\HS$, and $\a$ be the associated 
sesquilinear form.  
Along with $\HS\e$ and $\HS$ we define the Hilbert spaces $\HS^1\e$ and $\HS^1$  (energetic space associated with the forms $\a\e$ and $\a$) via
\begin{equation}\label{scale:1}
	\begin{array}{ll}
		\HS^1\e=\dom(\a\e),&
		\|u\|_{\HS^1\e}^2=\a\e[u,u]+\|u\|^2_{\HS\e},\\[2mm]
		\HS^1=\dom(\a),&
		\|f\|_{\HS^1}^2=\a[f,f]+\|f\|^2_{\HS},
	\end{array}
\end{equation}{
and the Hilbert space   $\HS^2$ via
\begin{gather} \label{scale:2}
	\HS^2\ceq\dom(\A),\quad
	\|f\|_{\HS^2}\ceq\|{(\A+\Id) f}\|_{\HS}.
\end{gather} } 
 
{ 
The following theorem is a corollary of several abstract results from
\cite{KP21,P06,P12}. For the sake  of the reader's convenience we recall  these results later in
Appendix~\ref{appendix:A}, and then in Appendix~\ref{appendix:B} we show how they imply  Theorem~\ref{thAAA}. }

{ 

\begin{theorem}\label{thAAA}	
		Let $\J\e \colon \HS\to  \HS\e$, ${\wt\J\e }\colon {\HS\e}\to \HS$ be linear bounded operators satisfying
	\begin{align} 
		\label{AAA:1}   
		(u,\J\e f)_{\HS\e} - (\wt\J\e u,f)_{\HS} =0,&&  
		\forall f\in\HS,\, u\in\HS\e,
		\\[1mm]
		\label{AAA:2}
		\|f-\wt\J\e \J\e f \|_{\HS} \leq \delta\e\|f\|_{\HS^1},&&  \forall f\in\HS^1,
		\\[1mm]
		\label{AAA:3}
		\|u-\J\e\wt \J\e u \|_{\HS\e}\leq \delta\e\|u\|_{\HS\e^1},&&   \forall u\in \HS^1\e, 
		\\[1mm] 
		\label{AAA:4} 
		\|\J\e f \|_{\HS\e} \leq 2\|f\|_{\HS},&& \forall f\in \HS,
	\end{align}
	where $\delta\e\in (0,1/2]$, $\delta\e\to 0$ as $\eps\to 0$.
	Furthermore, let 
	$\J\e^1 \colon \HS^1\to  \HS\e^1$, ${\wt\J\e^1 }\colon {\HS^1\e}\to \HS^1$
	be linear operators satisfying
	\begin{align}
		\label{AAA:5}
		\|\J\e^1 f-\J\e f\|_{\HS\e}&\leq \delta\e\|f\|_{\HS^1 },
		&& \forall f\in \HS^1,
		\\[1mm]
		\label{AAA:6} 
		\|\wt\J\e^1 u - \wt\J\e u \|_{\HS}&\leq  \delta\e\|u\|_{ \HS^1\e},&&\forall u\in\HS^1\e,
		\\[1mm]
		\label{AAA:7}
		|\a\e[\J\e f,u]-\a[f,\J'\e u]  |&\leq 
		\delta\e\|f\|_{\HS^2 }\|u\|_{\HS^1\e},&& \forall f\in \HS^2 ,\ u\in \HS^1\e.
	\end{align}
	Then the following properties hold true:
	\smallskip
\begin{itemize}
	\item[\rm(i)]  
	$
	\sigma(\A\e)\to \sigma(\A)$ and $
	\sigma_\disc(\A\e)\nearrow\sigma_\disc(\A)
	$ as $\eps\to 0.$\smallskip
	
	\item[\rm(ii)]
	The multiplicity is preserved in the following sense: if $\lambda\in\sigma_\disc(\A)$ is of multiplicity $\mu$ and
	$[\lambda-L,\lambda+L]\cap\sigma(\A)=\{\lambda\}$ with $L>0$, then 
	for sufficiently small $\eps$  the spectrum of  
	$\A\e$ in $[\lambda-L,\lambda+L]$ is purely discrete 
	and the total multiplicity of the eigenvalues of $\A\e$ contained in $[\lambda-L,\lambda+L]$ equals $\mu$.\smallskip
	
	\item[\rm(iii)]
	If, in addition, $\mu=1$ (i.e.  the eigenvalue $\lambda$ is simple), and  $\psi$ is the corresponding normalized in $\HS$ eigenfunction,  
	then there exists a sequence of   normalized in $\HS\e$ eigenfunctions $\psi\e$ of $\A\e$  such that $\|\psi\e - \J\e \psi \|_{\HS\e}\to 0\text{ as }\eps\to 0.$
	
	\item[\rm(iv)] One has the  estimate
   $\widetilde{d}_H\left(\sigma(\A\e),\,\sigma(\A)\right)\leq 10\delta\e.$
\end{itemize}	
	
\end{theorem}	 }

\section{Proof of the main results\label{sec:4}}

Recall  that $\B(\ell_k ,0)$ is the smallest ball  containing
$D_k$,
$( \rho_k)_{k\in\M}$ are positive numbers satisfying \eqref{DB1}, \eqref{DB2}, 
$(z_k)_{k\in\M}$ are points 
standing at the definitions of  $B\ke$ and $D\ke$. Evidently, 
\begin{gather}\label{elleps}
\B(\ell_k d\ke,z_k)\text{ is the smallest ball containing }D\ke.
\end{gather}
Further, we denote
\begin{align*}
R_k&\ceq {1\over 2}\min\left\{\dist(z_k,\cup_{l\not=k}\{z_l\}); \dist(z_k,\partial\Omega) \right\},\\ 
\tau_k&\ceq \inf\left\{\tau>0:\ B_k\cup\B(\rho_k,0)\subset \B(\tau,0)\right\}\\
&\,=
\inf\left\{\tau>0:\ B\ke\cup\B(\rho_k\eps,z_k)\subset \B(\tau\eps,z_k)\right\}.
\end{align*}
It is easy to see that
\begin{equation}\label{BRk:prop}
\overline{\B({R_k },z_k)}\subset\Omega\quad\text{and}\quad
{\B({R_k },z_k)}\cap{\B({R_l },z_l)}=\emptyset\text{ if } k\not=l.
\end{equation}
We also introduce the sets (below $z_k^n$ stands for the $x^n$-\,coordinate of $z_k$)
\begin{align}
\label{Bke+}
 B^+\ke&\ceq \B(\rho_k\eps,z_k)\cap\left\{x=(x',x^n)\in\R^n:\ x^n>z_k^n\right\},
\\\label{Bke-}
B^-\ke&\ceq \B(\rho_k\eps,z_k)\cap\left\{x=(x',x^n)\in\R^n:\ x^n<z_k^n\right\}.
\end{align}

In the following
we assume that $\eps\in (0,\eps_0]$ 
with $\eps_0>0$ satisfying
\begin{gather}
\label{tauR}
\eps_0<\min\left\{1, \min_{k\in\M}{R_k\over\tau_k}\right\},
\\
\label{CC}
\sup_{\eps\in(0,\eps_0]}C\e<\infty,
\\
\label{deps<1}
\sup_{\eps\in(0,\eps_0]} {d\e\over \eps}<1,
\end{gather}
where $C\e=\mathcal{O}(1)$ is a constant standing in \eqref{gamma:2};
the existence of $\eps_0$ satisfying \eqref{deps<1} is guaranteed by \eqref{gamma:2}.
We also denote
\begin{gather}\label{kappa}
\wt\tau_k\ceq{\tau_k\eps_0+R_k\over 2\eps_0}.
\end{gather}
It follows from the definitions of  $\ell_k$, $\rho_k$, $\tau_k$, $\wt\tau_k$, $R_k$,
and \eqref{DB2}, \eqref{deps}, \eqref{tauR} that
\begin{gather}\label{all:paramaters}
\forall k\in\M\ \forall\eps\in (0,\eps_0]:\quad
\ell_k d\ke<\ell_k\eps<\rho_k\eps\le\tau_k\eps<\wt\tau_k\eps <R_k.
\end{gather}
Note that $\rho_k=\tau_k$ only if $B\ke=B\ke^-$.
The inequalities \eqref{all:paramaters} (together with the properties \eqref{DBeps}, \eqref{elleps}) are clarified on Figure~\ref{fig-circles}.

\begin{figure}[h]
\begin{center}
\begin{picture}(250,250)
\scalebox{0.65}{\includegraphics{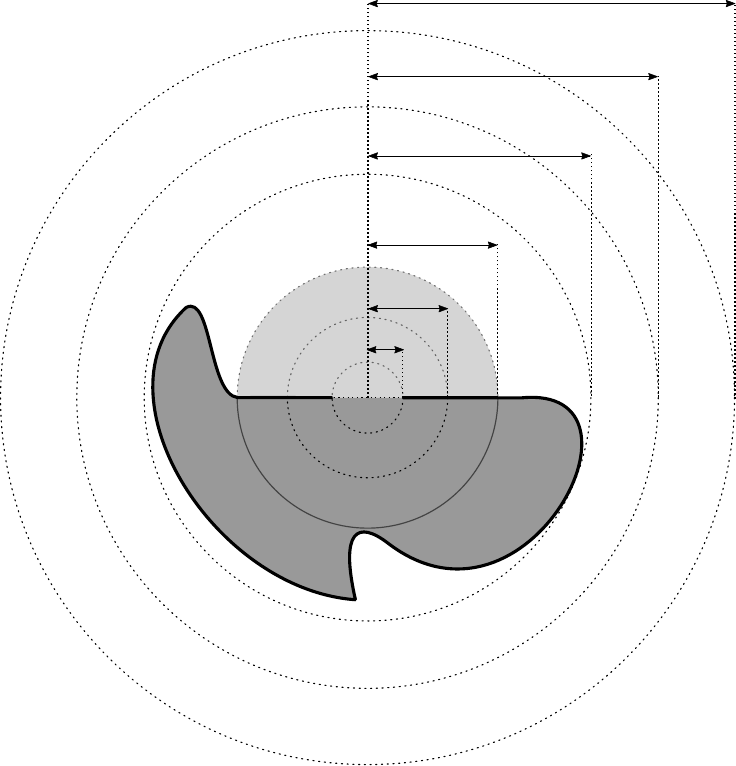}}

\put(-60,243){$_{R_k}$}
\put(-78,220){$_{\wt\tau_k\eps}$}
\put(-85,195){$_{\tau_k\eps}$}
\put(-100,168){$_{\rho_k\eps}$}
\put(-108,148){$_{\ell_k\eps}$}
\put(-115,135){$_{\ell_k d\ke}$}

\end{picture}
\end{center}
\caption{The domain $B\ke$ (dark gray color). The  part of $\partial B\ke$ corresponding to $S\ke$ is drawn via solid bold line, the part corresponding to $D\ke$ is drawn via dotted line.
The light gray half disc of the radius $\rho_k$ corresponds to $B\ke^+$; 
the corresponding lower half disc (its boundary is drawn via solid line) corresponds to $B\ke^-$. All circles has their center at $z_k$.} 
\label{fig-circles}
\end{figure}

We denote
$$\HS\e\ceq \L(\Omega\e),\qquad\HS\ceq \L(\Omega)\oplus \C^m.$$ 
Recall that $\A\e$ and $\A$ are the self-adjoint non-negative operators 
in $\HS\e$ and $\HS$, respectively, being
associated with the forms $\a\e$ and $\a$ defined by
\eqref{ae} and \eqref{a}. We introduce the spaces $\HS\e^1$, $\HS^1$, $\HS^2$ as in \eqref{scale:1} and \eqref{scale:2}, i.e.
\begin{gather*} 
\begin{array}{ll}
\HS^1\e=\H^1_{0,\partial\Omega}(\Omega\e),
&
\ds\|u\|^2_{\HS^1\e}=\|u\|^2_{\H^1(\Omega\e)},\\[1.5ex]
\HS^1=\H_0^1(\Omega)\times\C^m,
&
\ds\|f\|^2_{\HS^1}=\|f_0\|^2_{\H^1(\Omega)}+
\sum_{k\in\M}(\gamma_k+1)|f_k|^2,
\\[1ex]
\HS^2=\dom(\Delta_\Omega^D)\times \C^m,
&\ds
\|f\|_{\HS^2}^2=
\|-\Delta f_0+f_0\|^2_{\L(\Omega)}+\sum_{k\in\M}(\gamma_k+1)^2|f_k|^2.
\end{array}
\end{gather*} 
Hereinafter, $f_0$ and $(f_k)_{k\in\M}$ stand, respectively, for 
 $\L(\Omega)$  and  $\C^m$ components of $f\in\HS$.

Our goal is to construct suitable operators 
$$\J\e     : \HS    \to \HS\e,\quad  
\wt\J\e  : \HS\e  \to \HS, \quad
\J\e^1   : \HS^1  \to \HS^1\e, \quad
\wt\J\e^1: \HS\e^1\to \HS^1,$$
satisfying the assumptions of {abstract Theorem~\ref{thAAA}}.
{We introduce the operators $\J\e$ and $\wt\J\e$  in Subsection~\ref{subsec4:1}, and 
summarize their properties in Lemma~\ref{lm:JJ:1}. In Subsection~\ref{subsec4:2} we
define the operators  $\J\e^1$   and  $\wt\J\e^1$,
whose properties are given in Lemmata~\ref{lm:JJ:2}, \ref{lm:JJ:5}, \ref{lm:JJ:6}.
The proof of Theorems~\ref{th1}, \ref{th2} is completed in Subsection~\ref{subsec4:3}.}
\smallskip

The following lemma will be frequently used further.
We denote
\begin{gather*}
\eta\e\ceq
\begin{cases}
\eps^2,& n\ge 3 
\\
\eps^2|\ln\eps|, & n=2 .
\end{cases}
\end{gather*}
Recall that ${B\ke\cup B\ke^+ }\subset   \B(R_k,z_k)$   (see \eqref{all:paramaters} and Figure~\ref{fig-circles}).

\begin{lemma}\label{lm:Best}
One has
\begin{gather}\label{lm:Best:est}
\forall u\in \H^1(\B(R_k,z_k)):\quad
\|u\|^2_{\L(B\ke\cup B\ke^+)}\leq C\eta\e\|u\|^2_{\H^1(\B(R_k,z_k))}
\end{gather}
\end{lemma}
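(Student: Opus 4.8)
The estimate \eqref{lm:Best:est} says that the $\L^2$-mass of $u$ on the tiny region $B\ke \cup B\ke^+$ (a set of diameter $\mathcal{O}(\eps)$ sitting at $z_k$) is controlled by $\eta\e$ times the full $\H^1$-norm on the fixed ball $\B(R_k,z_k)$. Since $B\ke\cup B\ke^+ \subset \B(\tau_k\eps,z_k)$ by \eqref{all:paramaters}, it suffices to prove the stronger (and geometrically cleaner) inequality
\begin{gather*}
\|u\|^2_{\L(\B(r\eps,z_k))}\leq C\eta\e\,\|u\|^2_{\H^1(\B(R_k,z_k))}
\end{gather*}
for a fixed radius $r$ (any $r\ge \tau_k$ with $r\eps_0 < R_k$ will do, which is possible by \eqref{tauR}). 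After translating $z_k$ to the origin and, if convenient, rescaling, this is a standard ``small-ball'' trace/interpolation estimate: a function in $\H^1$ of a ball of fixed radius $R_k$ cannot concentrate too much mass on a concentric ball of radius $r\eps$.

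The key step is the scaling/covering argument. First I would reduce to the case $z_k = 0$. For $n\ge 3$ the plan is: cover $\B(r\eps,0)$ and use the elementary bound, valid for any $v\in\H^1(\B(R,0))$ with $R$ fixed,
\begin{gather*}
\|v\|^2_{\L(\B(s,0))}\le C\Bigl(s^2\|\nabla v\|^2_{\L(\B(R,0))}+ s^n R^{-n}\|v\|^2_{\L(\B(R,0))}\Bigr),
\end{gather*}
which one proves by writing $v(x)$ along radii emanating from points of a fixed sphere (or, more robustly, by the Poincaré inequality on $\B(R,0)$ combined with $\|v-\bar v\|_{\L(\B(s,0))}^2 \le |\B(s,0)|^{1/?}\cdots$ — in any case a routine computation). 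Taking $s = \tau_k\eps$ gives $s^2 = \mathcal{O}(\eps^2)=\mathcal{O}(\eta\e)$ and $s^n = \mathcal{O}(\eps^n) = o(\eps^2) = o(\eta\e)$ since $n\ge 3$, so both terms are bounded by $C\eta\e\|v\|^2_{\H^1(\B(R_k,0))}$, as required.

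For $n=2$ the power $s^n = s^2$ no longer beats $\eps^2$ by a logarithm; here I would instead use the sharper two-dimensional embedding. One route: the scaled inequality $\|v\|^2_{\L(\B(\delta,0))}\le C\delta^2|\ln\delta|\,\|v\|^2_{\H^1(\B(R,0))}$, which follows from the fact that $\H^1(\B(R,0))$ embeds into every $\L^p$, $p<\infty$, with $\|v\|_{\L^p}\le C\sqrt{p}\,\|v\|_{\H^1}$, combined with Hölder: $\|v\|_{\L^2(\B(\delta,0))}\le |\B(\delta,0)|^{1/2-1/p}\|v\|_{\L^p}$ and then optimizing over $p \sim |\ln\delta|$. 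Taking $\delta = \tau_k\eps$ yields exactly the factor $\eps^2|\ln\eps| = \eta\e$. (Alternatively one can quote a standard logarithmic Sobolev/trace inequality directly.)

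**Main obstacle.** The only genuinely delicate point is getting the right power of the logarithm in dimension $n=2$ — i.e.\ making sure the constant in the $\L^p$ bound for $\H^1(\B(R,0))$ grows like $\sqrt p$ (not faster) so that the optimization over $p$ produces $|\ln\eps|^1$ and not a worse power; everything else is a routine scaling and Poincaré-type estimate with constants depending only on $R_k$ (hence independent of $\eps$, which is what is needed since $R_k$ is fixed). One should also keep track that the constant $C$ may depend on $k$, but since $\M$ is finite this is harmless.
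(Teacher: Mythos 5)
Your argument is correct, but it takes a genuinely different route from the paper's. After the common reduction to $\|u\|^2_{\L(\B(\tau_k\eps,z_k))}$, you invoke standard embedding/interpolation: for $n\ge 3$, the Sobolev embedding $\H^1(\B(R_k,z_k))\hookrightarrow \L^{2^*}(\B(R_k,z_k))$ with $2^*=2n/(n-2)$, followed by H\"older on the small ball, gives directly
\begin{equation*}
\|u\|^2_{\L(\B(s,z_k))}\le C\,s^{n(1-2/2^*)}\|u\|^2_{\H^1(\B(R_k,z_k))}=C\,s^2\,\|u\|^2_{\H^1(\B(R_k,z_k))},
\end{equation*}
which with $s=\tau_k\eps$ is exactly $C\eps^2=C\eta\e$ (this is cleaner than the Poincar\'e/averaging route you hedge about, and subsumes the two-term bound you wrote down); for $n=2$ the Moser--Trudinger estimate $\|u\|_{\L^p}\le C\sqrt p\,\|u\|_{\H^1}$ combined with H\"older and the optimization $p\sim|\ln\eps|$ produces the factor $\eps^2|\ln\eps|=\eta\e$, and you correctly flag the $\sqrt p$ growth of the constant as the one delicate point. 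The paper proceeds differently and more hands-on: it first bounds $\|u\|^2_{\L(\B(\tau_k\eps,z_k))}$ by $\eps\|u\|^2_{\L(\partial\B(\tau_k\eps,z_k))}+\eps^2\|\nabla u\|^2$ using a scaled Robin eigenvalue inequality on the ball, and then estimates the boundary trace by integrating the fundamental theorem of calculus along radii through the fixed annulus $\B(R_k,z_k)\setminus\B(\tau_k\eps,z_k)$; the single quantity $M\e=\int_{\tau_k\eps}^{R_k}\tau^{1-n}\d\tau\lesssim\eps^{-n}\eta\e$ then encodes both the $n\ge 3$ power and the $n=2$ logarithm through the antiderivative of $\tau^{1-n}$. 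Your version is shorter if one is willing to quote the embedding theorems with their sharp constants; the paper's is fully self-contained (no Sobolev or Moser--Trudinger needed) at the cost of the explicit spherical-coordinate computation. Both give the same $\eta\e$ up to constants depending only on $R_k$, $\tau_k$ and $n$, which suffices since $\M$ is finite.
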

 
\begin{proof}
Evidently, it is enough to prove \eqref{lm:Best:est} for $u\in \mathsf{C}^\infty(\overline{ \B(R_k,z_k) })$.
One has: 
\begin{align}\label{lm:Best:est1}
\|u\|_{\L(B\ke\cup B\ke^+)}^2&\leq
\|u\|_{\L(\B(\tau_k \eps,z_k))}^2\\&\leq 
C\left(\eps\|u\|^2_{\L(\partial(\B(\tau_k \eps,z_k)))}+\eps^2\|\nabla u\|^2_{\L(\B(\tau_k \eps,z_k))}\right), \notag 
\end{align}
where the first inequality is due to
$B\ke\cup B\ke^+ \subset\B(\tau_k \eps,z_k)$ (see~\eqref{all:paramaters}  and Figure~\ref{fig-circles}), while 
the second inequality  is deduced from the  estimate
\begin{gather*}
\|v\|_{\L(\B(\tau_k,0))}^2\leq 
(\lambda_1(-\Delta^R_{\B(\tau_k,0)}))^{-1}\left(\|v\|^2_{\L(\partial(\B(\tau_k,0)))}+\|\nabla v\|^2_{\L(\B(\tau_k ,0))}\right)\label{Robin}
\end{gather*}
via  the coordinate transformation $ y= (x-z_k)\eps^{-1}$, where $y\in \B(\tau ,0)$, $x\in \B(\tau\eps ,z_k)$; here $\Delta^R_{\B(\tau_k,0)}$
is  the Laplacian on $\B(\tau_k,0)$ subject to the Robin boundary conditions 
${\partial_n u} +u=0$.

We introduce the spherical coordinate system $(r,\phi)$ in $\overline{\B(R_k,z_k)\setminus \B(\tau_k\eps ,z_k)}$.
Here $r\in [\tau_k\eps,R_k]$ stands for the distance to $z_k$,
$\phi=(\phi_1,{\dots},\phi_{n-1})$ are the angular coordinates,  where $\phi_j\in [0,\pi]$ for $j=1,\dots,n-2$, 
$\phi_{n-1}\in [0,2\pi)$.
One has
$$
u(\tau_k\eps,\phi)=u(r,\phi)-\int_{\tau_k\eps}^{r}{\partial
u(\tau,\phi)\over\partial\tau}\d\tau,
$$
whence
\begin{align} \label{lm:Best:est2}
|u(\tau_k\eps,\phi)|^2&\le 2|u(r,\phi)|^2+2\left|\int_{\tau_k\eps}^{r}{\partial
u(\tau,\phi)\over\partial\tau}\d\tau\right|^2\\
\notag
&\leq
2|u(r,\phi)|^2+
2M\e \int_{\tau_k\eps}^{R_k}\left|{\partial
u(\tau,\phi)\over\partial\tau}\right|^2\tau^{n-1}\d\tau,
\end{align}
where $M\e\ceq \ds\int_{\tau_k\eps}^{R_k}\tau^{1-n}\d\tau$.
We denote $N\e\ceq \ds \int_{\tau_k\eps}^{R_k} r^{n-1}\d r $.
Multiplying \eqref{lm:Best:est2} by $(N\e)^{-1}(\tau_k\eps)^{n-1}r^{n-1}\prod_{j=1}^{n-2}\left(\sin\phi_j\right)^{n-1-j}$, integrating
over $r\in (\tau_k\eps,R_k)$, $\phi_j\in (0,\pi)$, $j=1,\dots,n-2$, 
$\phi_{n-1}\in (0,2\pi)$, { and taking into account that 
the left-hand-side   and the integral in the right-hand-side of \eqref{lm:Best:est2} are both independent on $r$,
we get
\begin{align}\label{spherical}
\int_{\mathbf{S}^{n}}|u(\tau_k\eps,\phi)|^2 (\tau_k\eps)^{n-1} \d \phi
&\leq
2N\e^{-1}(\tau_k\eps)^{n-1}\int_{\tau_k\eps}^{R_k}\int_{\mathbf{S}^{n}}|u(r,\phi)|^2 r^{n-1}\d\phi\d r 
\\\notag
&+
2M\e (\tau_k\eps)^{n-1} \int_{\tau_k\eps}^{R_k}\int_{\mathbf{S}^{n}}\left|{\partial
	u(\tau,\phi)\over\partial\tau}\right|^2\tau^{n-1}\d\phi \d\tau.
\end{align}
Here  $\d\phi$ stands for $\prod_{j=1}^{n-2}\left(\sin\phi_j\right)^{n-1-j}\d\phi_1\dots\d\phi_{n-1}$, which is nothing but
the surface element on the unit sphere $\mathbf{S}^{n}$ in $\R^n$; consequently,
 $(\tau_k\eps)^{n-1} \d \phi$ is the surface element on the sphere $\partial(\B(\tau_k\eps,z_k))$,
 and $r^{n-1} \d\phi\d r$ (or $\tau^{n-1} \d\phi\d\tau$) is the volume element.
Thus, \eqref{spherical} is equivalent to}
\begin{align}\label{lm:Best:est3}
\|u\|^2_{\L(\partial(\B(\tau_k\eps,z_k)))}
&\leq 
2(\tau_k\eps)^{n-1}\left((N\e)^{-1}\|u\|^2_{\L(T\ke)}
+
M\e\int_{\tau_k\eps}^{R_k}\int_{\mathbf{S}^{n}}\left|{\partial
	u(\tau,\phi)\over\partial\tau}\right|^2\tau^{n-1}\d\phi \d\tau\right)
\\\notag
&\leq 2(\tau_k\eps)^{n-1}\left((N\e)^{-1}\|u\|^2_{\L(T\ke)}
 +
M\e\|\nabla u\|^2_{\L(T\ke )}\right).
\end{align}
where $T\ke\ceq \B(R_k,z_k)\setminus \overline{\B(\tau_k\eps ,z_k)}$; 
{the last inequality  follows immediately from the fact that  in the spherical coordinates $(r,\phi)$ the length
of the gradient vector $\nabla u$    equals
$(|\partial_r u|^2+\underset{\ge 0}{\underbrace{\dots}})^{1/2}$.}
One has 
\begin{gather}\label{MN:eps}
M\e\leq C\eps^{ -n}\eta\e,\quad N\e\geq C>0
\end{gather}
(the first inequality in \eqref{MN:eps} is obtained via straightforward calculations,
while the second follows from $\tau_k\eps\leq \tau_k\eps_0<R_k$).
Using \eqref{MN:eps}, 
we can extend \eqref{lm:Best:est3} as follows,
\begin{gather} \label{lm:Best:est4} 
\|u\|^2_{\L(\partial(\B(\tau_k\eps,z_k)))}
\leq C\left(\eps^{n-1}\|u\|^2_{\L(T\ke)}
+
\eps^{-1}\eta\e\|\nabla u\|^2_{\L(T\ke)}\right).
\end{gather}
Combining \eqref{lm:Best:est1} and \eqref{lm:Best:est4}, and taking into account that
\begin{gather*}
\eps^n\leq C\eps^2\text{ with }C=\eps_0^{n-2},\quad 
\eps^2=\eta\e\text{ if }n\ge 3,\quad
\eps^2\leq C \eta\e\text{ with }C=|\ln\eps_0|^{-1}\text{ if }n=2,
\end{gather*}
we arrive at the  required estimate \eqref{lm:Best:est}. The lemma is proven.  
\end{proof}

\subsection{Operators $\J\e$ and $\wt\J\e$ and their properties\label{subsec4:1}}
We denote
\begin{gather*}
\Omega_{\eps}^{\rm out}\ceq \Omega\setminus\overline{\cup_{k\in\M}B_{k,\eps}},
\end{gather*}
i.e. ${\Omega\e}=\Omega\e^{\rm out}\cup\left(\cup_{k\in\M}B\ke\right)\cup\left(\cup_{k\in\M}D\ke\right)$.
We define the operator $\J\e:\HS\to\HS\e$ via{
\begin{gather*}
(\J\e f)(x)\ceq
\begin{cases}
f_0(x),&x\in \Omega_{\eps}^{\rm out},
\\
|B_{k,\eps}|^{-1/2}f_k,&x\in B_{k,\eps},\ k\in\M,
\end{cases}
\end{gather*}}
where $f=(f_0,f_1,\dots,f_m)$ with $f_0\in\L(\Omega)$, $(f_k)_{k\in\M}\in\C^m$.

Further, we introduce the operator $\wt\J\e:\HS\e\to\HS$ acting on $u\in\L(\Omega\e)$ as follows:
\begin{gather}\label{wtJ}
(\wt\J\e u)_0{(x)}\ceq 
\begin{cases}
u(x),&x\in \Omega_{\eps}^{\rm out},
\\
0,&x\in \overline{\cup_{k\in \M}B_{k,\eps}},
\end{cases}\qquad
(\wt\J\e u)_k\ceq |B_{k,\eps}|^{1/2}\la u \ra_{B_{k,\eps}},\ k\in\M,
\end{gather}
where $\la u \ra_{B_{k,\eps}}$ is the mean value of $u$ in $B\ke$, i.e.
$$\la u \ra_{B_{k,\eps}}\ceq |B\ke|^{-1}\int_{B\ke}u(x)\d x .$$

The above defined operators satisfy the following properties.

\begin{lemma}\label{lm:JJ:1}
One has
\begin{align}
\label{JJ:1}
(u,\J\e f)_{\HS\e}-(\wt\J\e u,f)_{\HS}=0,&\quad\forall f\in\HS,\ u\in\HS\e,
\\
\label{JJ:2}
\|\J\e f\|_{\HS\e}\leq \|f\|_{\HS},&\quad\forall f\in\HS,
\\
\label{JJ:3}
\|f-\wt\J\e\J\e f\|_{\HS}\leq C \eta\e ^{1/2}\|f\|_{\HS^1},&\quad\forall f\in{\HS^1},
\\
\label{JJ:4}
\|u-\J\e\wt\J\e u\|_{\HS\e}\leq C{ \eps}\|u\|_{\HS\e^1},&\quad\forall u\in{\HS\e^1}.
\end{align}
\end{lemma}

\begin{proof} 
Properties \eqref{JJ:1}--\eqref{JJ:2}  follow immediately  from the definition of 
$\J\e$, $\wt\J\e$. {
Furthermore, we have
\begin{gather}\label{fJJf}
(\wt\J\e\J\e f)_0(x)=
\begin{cases}
	f_0(x),&x\in \Omega_{\eps}^{\rm out},
	\\
	0,&x\in \overline{\cup_{k\in \M}B_{k,\eps}},
\end{cases}\qquad
(\wt\J\e\J\e f)_k=f_k,\ k\in\M.
\end{gather}}
Using   {\eqref{fJJf} and} Lemma~\ref{lm:Best}, we obtain the estimate \eqref{JJ:3}:
\begin{align*}
\|f-\wt\J\e\J\e f\|^2_{\HS}&=
\|f_0-(\wt\J\e\J\e f)_0\|^2_{\L(\Omega)}+
\suml_{k\in\M}|f_k-(\wt\J\e\J\e f)_k|^2
\\
&
=\suml_{k\in\M}\|f_0\|^2_{\L(B_{k,\eps})}
\leq C \eta\e\suml_{k\in\M}\|f_0\|^2_{\H^1(\B(R_k,z_k))}\\&
\leq C \eta\e \|f_0\|^2_{\H^1(\Omega)}
\leq C \eta\e \|f\|_{\HS^1}^2.
\end{align*}
Finally, we prove \eqref{JJ:4}. 
For each $u\in\H^1(B\ke)$
one has the   Poincar\'{e} inequality
\begin{gather}\label{Poincare}
\|u-\langle u \rangle_{B\ke}\|^2\leq {1\over \lambda_2(-\Delta_{B\ke}^N)}\|\nabla  u\|^2_{\L(B\ke)}=
{\eps^2\over \lambda_2(-\Delta_{B_k}^N)}\|\nabla  u\|^2_{\L(B\ke)} 
\end{gather}
(the last equality follows from  $B\ke\cong\eps B_k$).
Using \eqref{Poincare}, we get the estimate \eqref{JJ:4}:
\begin{gather*}
\|u-\J\e\wt\J\e u\|^2_{\HS\e}=
\sum_{k\in\M}\|u- \la u \ra_{B\ke}\|^2_{\L(B\ke)}\leq 
C\eps^2\sum_{k\in\M}\|\nabla u\|^2_{\L(B\ke)}\leq C\eps^2 \|u\|_{\HS\e^1}^2.
\end{gather*}
The lemma is proven.
\end{proof}

\subsection{Operators $\J^1\e$ and $\wt\J^1\e$ and their properties\label{subsec4:2}}

To construct the operator $\wt\J^1\e$, one needs the following auxiliary result.

\begin{lemma}[{\cite[Lemma~2.6]{ACDP92}}]\label{lm:ext}
Let $Y$ be a bounded domain in $\R^n$. 
Let $Y^* $ be a domain in $\R^n$ having Lipschitz boundary
at each point of $\partial Y^*\cap\overline{Y}$.
Then  there exists a linear operator
$P: \H^1(Y^*)\to \H^1(Y)$ such that for each $u\in \H^1(Y^*)$
one has
\begin{gather}\label{Pext}
\begin{array}{l}
P u=u\ \text{ a.e. in }Y\cap Y^*,\\[2mm]
\|P u \|_{\L(Y )}\leq C_1\| u\|_{\L(Y^*)},\quad 
\|\nabla(P u )\|_{\L(Y )}\leq C_2\|\nabla u\|_{\L(Y^*)},
\end{array}
\end{gather}
where the constants $C_1,C_2$  depend only on $Y$ and $Y^*$.
\end{lemma}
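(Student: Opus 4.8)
The final statement is Lemma~\ref{lm:ext}, an extension-type result quoted from \cite{ACDP92}. Since it is cited from the literature, I will sketch a self-contained proof in the spirit of the standard Sobolev extension machinery, reduced to what is actually needed here: a bounded operator $P:\H^1(Y^*)\to\H^1(Y)$ that restricts to the identity on $Y\cap Y^*$ and is bounded in $\L$ and in the gradient seminorm separately, with constants depending only on $Y,Y^*$.

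\textbf{Plan of proof.} The plan is to build $P$ by a partition of unity that localizes the construction near $\partial Y^*\cap\overline Y$, where the hypothesis gives us Lipschitz regularity. First I would cover the compact set $\partial Y^*\cap\overline Y$ by finitely many open balls $U_1,\dots,U_N$ such that in each $U_j$, after a rigid change of coordinates, $Y^*\cap U_j$ is the region lying above the graph of a Lipschitz function $\varphi_j$. On each such piece one uses the classical Lipschitz-domain extension operator (reflection across the graph, e.g. Calder\'on--Stein or the elementary Lipschitz reflection $u(x',x^n)\mapsto u(x',2\varphi_j(x')-x^n)$ composed with a cutoff), obtaining local operators $E_j:\H^1(Y^*\cap U_j')\to\H^1(U_j)$ that agree with $u$ on $Y^*\cap U_j$ and satisfy $\|E_j u\|_{\L(U_j)}\le C\|u\|_{\L(Y^*\cap U_j')}$ and $\|\nabla E_j u\|_{\L(U_j)}\le C\|\nabla u\|_{\L(Y^*\cap U_j')}$, with $C$ depending only on the Lipschitz constants of $\varphi_j$ — hence only on $Y,Y^*$. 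Away from $\partial Y^*\cap\overline Y$, i.e. on an open set $U_0$ with $\overline{U_0}\cap\overline{\partial Y^*\cap\overline Y}=\emptyset$ and $Y\subset U_0\cup\bigcup_j U_j$, the set $\overline{U_0}\cap\overline Y$ is contained in $Y^*$ (since a point of $\overline Y$ not in the boundary piece and not exterior to $Y^*$ lies in $Y^*$), so there one simply takes $E_0 u=u$, possibly after a harmless mollification to stay inside $Y^*$. Then I would fix a smooth partition of unity $\{\chi_j\}_{j=0}^N$ subordinate to $\{U_j\}$ with $\sum_j\chi_j\equiv 1$ on a neighbourhood of $\overline Y$, and define $Pu\ceq\bigl(\sum_{j=0}^N\chi_j\,E_j u\bigr)\restr_Y$.

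\textbf{Verification.} The identity property \eqref{Pext} on $Y\cap Y^*$ holds because each $E_j u=u$ on $Y^*\cap U_j$ and $\sum_j\chi_j=1$ there. The $\L$-bound is immediate: $\|Pu\|_{\L(Y)}\le\sum_j\|\chi_j E_j u\|_{\L(Y)}\le C\sum_j\|u\|_{\L(Y^*)}=C_1\|u\|_{\L(Y^*)}$. For the gradient bound, Leibniz gives $\nabla(\chi_j E_j u)=\chi_j\nabla(E_j u)+(\nabla\chi_j)E_j u$; summing over $j$ and using $\sum_j\nabla\chi_j=0$ on a neighbourhood of $\overline Y$, the terms $(\nabla\chi_j)E_j u$ can be rewritten as $\sum_j(\nabla\chi_j)(E_j u-u)$ on $Y\cap Y^*$ and must be handled more carefully elsewhere; this is where one needs that the overlaps $U_j\cap U_k$ either both lie in $Y^*$ (so $E_j u=E_k u=u$) or are controlled by the local extension bounds. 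The clean way to phrase the conclusion is: $\|\nabla(Pu)\|_{\L(Y)}\le\sum_j\|\chi_j\|_\infty\|\nabla E_j u\|_{\L(U_j)}+\sum_j\|\nabla\chi_j\|_\infty\|E_j u\|_{\L(U_j)}\le C\|\nabla u\|_{\L(Y^*)}+C\|u\|_{\L(Y^*)}$, and then absorb the lower-order term by the standard trick of applying the whole construction to $u$ replaced by $u-c$ with $c$ a suitable constant (or mean value) on each $U_j$ and invoking Poincar\'e, which kills the $\|u\|_{\L}$ contribution and leaves only $\|\nabla u\|_{\L(Y^*)}$ — yielding the separate gradient estimate $\|\nabla(Pu)\|_{\L(Y)}\le C_2\|\nabla u\|_{\L(Y^*)}$ as stated.

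\textbf{Main obstacle.} The technical heart is the \emph{separation} of the two estimates in \eqref{Pext}: getting the gradient of the extension controlled purely by the gradient of $u$, with no $\|u\|_{\L}$ on the right. A naive partition-of-unity extension only gives the combined $\H^1$-bound. The fix — localizing, subtracting locally constant (or mean-value) functions, using Poincar\'e on each patch, and checking that the constants recombine consistently across overlaps — is exactly the subtle bookkeeping carried out in \cite[Lemma~2.6]{ACDP92}, and it relies essentially on the hypothesis that $\partial Y^*$ is Lipschitz precisely at the points of $\partial Y^*\cap\overline Y$ (so the reflection extension is available there and nowhere worse is needed). A secondary point to be careful about is that the constants $C_1,C_2$ depend only on $Y$ and $Y^*$ and not on $u$; this follows because the covering, the partition of unity, and the Lipschitz charts can all be fixed once and for all from the geometry of $Y$ and $Y^*$.
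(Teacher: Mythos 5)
Note first that the paper does not prove Lemma~\ref{lm:ext}: it quotes it verbatim from \cite[Lemma~2.6]{ACDP92}, so there is no in-paper argument to compare against. Your sketch follows the right general strategy for the cited result --- localize near $\partial Y^*\cap\overline Y$ with Lipschitz charts, extend by reflection, glue with a partition of unity, and decouple the gradient estimate from the $\L$ estimate via mean-value subtraction and Poincar\'e --- and it correctly identifies the genuine difficulty, namely the separate bound for $\nabla(Pu)$ in terms of $\nabla u$ alone.

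There is, however, a concrete gap in the ``$U_0$'' step. You assert $\overline{U_0}\cap\overline Y\subset Y^*$ on the grounds that ``a point of $\overline Y$ not in the boundary piece and not exterior to $Y^*$ lies in $Y^*$'', and conclude that on $U_0$ one may simply set $E_0 u = u$. But a point of $\overline Y$ far from $\partial Y^*\cap\overline Y$ can perfectly well be \emph{exterior} to $Y^*$: the set $Y\setminus\overline{Y^*}$ consists precisely of such points, and nothing in the hypotheses confines it to a thin layer near $\partial Y^*$. In the very application the paper makes of this lemma (Corollary~\ref{coro:ext} and~\eqref{Pk:ext}) one has $Y=\B(\wt\tau_k,0)$ and $Y^*=\B(\wt\tau_k,0)\setminus\overline{B_k}$, so $Y\setminus\overline{Y^*}=B_k$ is an open ``hole'' with a deep interior. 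Setting $E_0 u=u$ there is meaningless because $u$ is not defined there, and ``mollifying to stay inside $Y^*$'' does not repair it, since the missing set has positive measure rather than being a boundary sliver. What the argument actually needs is to \emph{fill} $U_0\cap(Y\setminus\overline{Y^*})$, for instance by the constant $\la u\ra_{Y\cap Y^*}$ already introduced in your Poincar\'e step (observe that $\partial Y^*\cap U_0\cap\overline Y=\emptyset$, so the two pieces of $U_0\cap Y$ are at positive mutual distance and no compatibility condition arises), or alternatively to enlarge the Lipschitz charts $U_j$ until they swallow all of $Y\setminus\overline{Y^*}$ --- legitimate for a fixed pair $(Y,Y^*)$, but it has to be stated. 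Once this is repaired, the rest of your bookkeeping (Leibniz rule, $\sum_j\nabla\chi_j=0$, absorbing the lower-order term by applying the construction to $u-\la u\ra_{Y\cap Y^*}$ and invoking Poincar\'e on a bounded connected reference set) goes through and matches the argument of \cite{ACDP92}.
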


Owing to the fact that in Lemma~\ref{lm:ext} one has separate estimates 
for $u$ and $\nabla u$, we immediately get the following corollary.

\begin{corollary}\label{coro:ext}
Let $Y,Y^*\subset\R^n$ be  as in Lemma~\ref{lm:ext},  and $P: \H^1(Y^*)\to \H^1(Y)$ be the operator satisfying \eqref{Pext}. 
Let $Y\e\ceq\eps Y+z$, $Y^*\e\ceq \eps Y^*+z$, where $\eps>0$, $z\in\R^n$. 
We define the operator
$$
P\e: \H^1(Y\e^*)\to \H^1(Y\e),\quad
(P\e u)(x)= (P v\e)((x-z)\eps^{-1}),\ x\in Y\e,
$$
where $v\e(y)= u(y\eps+z)$, $y\in Y^*$.
Then  for each $ u\in \H^1(Y^*\e)$ one has
\begin{gather*}
\begin{array}{l}
P\e u=u\ \text{ a.e. in }Y\e\cap Y\e^*,\\[2mm]
\|P\e u \|_{\L(Y\e )}\leq C_1\| u\|_{\L(Y\e^*)},\quad 
\|\nabla(P\e u )\|_{\L(Y\e )}\leq C_2\|\nabla u\|_{\L(Y^*\e)},
\end{array}
\end{gather*}
where the constants $C_1,C_2$ are the same constants as in \eqref{Pext}.
\end{corollary}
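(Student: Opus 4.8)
The plan is to deduce the corollary from Lemma~\ref{lm:ext} by a pure scaling argument. The operator $P\e$ is defined as the conjugate of $P$ by the affine map $\Phi\e\colon\R^n\to\R^n$, $\Phi\e(y)=\eps y+z$, which carries $Y^*$ onto $Y^*\e$, $Y$ onto $Y\e$, and (crucially) $Y\cap Y^*$ onto $Y\e\cap Y\e^*$. The only substantive point is that, under the correspondence $u\mapsto v\e$ with $v\e(y)=u(\eps y+z)$, the $\L$-norms of $u$ and of $\nabla u$ each scale by a fixed power of $\eps$, and these powers cancel when one conjugates $P$ back — which is precisely why the constants $C_1,C_2$ are left unchanged.

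First I would record the change-of-variables identities. For $u\in\H^1(Y^*\e)$ and $v\e=u\circ\Phi\e\in\H^1(Y^*)$, the chain rule gives $\nabla v\e(y)=\eps\,(\nabla u)(\Phi\e(y))$, and the substitution $x=\Phi\e(y)$ (Jacobian $\eps^n$) yields
\begin{gather*}
\|v\e\|_{\L(Y^*)}^2=\eps^{-n}\|u\|_{\L(Y^*\e)}^2,\qquad
\|\nabla v\e\|_{\L(Y^*)}^2=\eps^{2-n}\|\nabla u\|_{\L(Y^*\e)}^2,
\end{gather*}
and likewise, for any $w\in\H^1(Y)$, the function $w\circ\Phi\e^{-1}$ satisfies $\|w\circ\Phi\e^{-1}\|^2_{\L(Y\e)}=\eps^{n}\|w\|^2_{\L(Y)}$ and $\|\nabla(w\circ\Phi\e^{-1})\|^2_{\L(Y\e)}=\eps^{n-2}\|\nabla w\|^2_{\L(Y)}$.

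Next I would apply Lemma~\ref{lm:ext} to $v\e$: the function $Pv\e\in\H^1(Y)$ satisfies $Pv\e=v\e$ a.e.\ in $Y\cap Y^*$ together with the bounds in \eqref{Pext}. Since $P\e u=(Pv\e)\circ\Phi\e^{-1}$ by definition, applying the identities above to $w=Pv\e$ and then invoking \eqref{Pext} and the identities for $v\e$ gives $\|P\e u\|_{\L(Y\e)}^2=\eps^{n}\|Pv\e\|_{\L(Y)}^2\le\eps^{n}C_1^2\|v\e\|_{\L(Y^*)}^2=C_1^2\|u\|_{\L(Y^*\e)}^2$, and similarly $\|\nabla(P\e u)\|_{\L(Y\e)}^2\le C_2^2\|\nabla u\|_{\L(Y^*\e)}^2$, the $\eps$-powers cancelling exactly. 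Linearity of $P\e$ is inherited from that of $P$ and of composition with $\Phi\e$. For the pointwise statement, for a.e.\ $x\in Y\e\cap Y\e^*$ one has $\Phi\e^{-1}(x)\in Y\cap Y^*$, hence $(P\e u)(x)=(Pv\e)(\Phi\e^{-1}(x))=v\e(\Phi\e^{-1}(x))=u(x)$.

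I do not expect any genuine obstacle in this proof; the only care needed is in the bookkeeping of the exponents $\eps^{-n}$, $\eps^{2-n}$, $\eps^{n}$, $\eps^{n-2}$, and in noting that it is the separate control of $u$ and of $\nabla u$ in Lemma~\ref{lm:ext} (rather than a single $\H^1$-bound) that makes the scale invariance of the constants possible.
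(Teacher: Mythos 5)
Your proof is correct and is exactly the scaling argument the paper has in mind (the paper states the corollary follows "immediately" from the separate $\L$-estimates for $u$ and $\nabla u$ in Lemma~\ref{lm:ext}, without writing out the details). Your bookkeeping of the exponents $\eps^{-n}$, $\eps^{2-n}$, $\eps^{n}$, $\eps^{n-2}$ is accurate, and the cancellation yielding the unchanged constants $C_1,C_2$ is precisely the point.
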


One has $B\ke=\eps B_k+z_k$ and $\B(\wt\tau_k \eps,z_k)=\eps\B(\wt\tau_k,0)+z_k$
(recall that $\wt\tau_k$ is defined by \eqref{kappa}).
Since $B\ke\subset\B(\tau_k,z_k)$ and $\tau_k<\wt\tau_k$, we have $\overline{B\ke}\subset
\B(\wt\tau_k \eps,z_k)$ (that is, $\B(\wt\tau_k\eps,z_k)\setminus \overline{B\ke}$ is an open connected set with Lipschitz boundary).
By Corollary~\ref{coro:ext} there exists 
a linear operator 
$$P\ke:\H^1(\B(\wt\tau_k\eps,z_k)\setminus \overline{B\ke})\to \H^1(\B(\wt\tau_k\eps,z_k))$$ 
such
that
\begin{gather}\label{Pk:ext}
\begin{array}{l}
P\ke u=u\ \text{ a.e. in }\B(\wt\tau_k\eps,z_k)\setminus \overline{B\ke},\\[2mm]
\|P\ke u \|_{\H^1(\B(\wt\tau_k\eps,z_k))}\leq C_k\| u\|_{\H^1(\B(\wt\tau_k\eps,z_k)\setminus \overline{B\ke})}.
\end{array}
\end{gather} 
Finally, we define   the operator    $\wt\J\e^1:\HS\e^1\to\HS^1 $ by
\begin{gather}\label{wtJ1}
\begin{array}{l}
(\wt\J\e^1 u)_0(x)\ceq 
\begin{cases}
(P\ke(u\restr_{\B(\wt\tau_k\eps,z_k)\setminus \overline{B\ke}}))(x),&x\in \B(\wt\tau_k\eps,z_k),\ k\in\M,\\
u(x),&x\in\Omega\setminus\left(\cup_{k\in\M}\B(\wt\tau_k\eps,z_k)\right),
\end{cases} \\[5mm]   
(\wt\J\e^1 u)_k\ceq (\wt\J\e  u)_k,\ k\in\M.
\end{array}
\end{gather}
It follows easily from \eqref{Pk:ext}, \eqref{BRk:prop} and the last inequality in \eqref{all:paramaters} that
$(\wt\J\e^1 u)_0 \in\H^1_0(\Omega)$; thus \eqref{wtJ1} provides a well-defined linear operator $\wt\J\e^1:\HS^1\e\to\HS^1$ such that
\begin{gather}
\label{E}
(\wt\J\e^1 u)_0 = u\ \text{ a.e. on }\Omega\e^{\rm out},\quad 
\|(\wt\J\e^1 u)_0 \|_{\H^1(\Omega)}\leq C\| u\|_{\H^1(\Omega\e^{\rm out})}.
\end{gather}

\begin{lemma}\label{lm:JJ:2}
 One has
\begin{align}
\label{JJ:6}
\|\wt\J^1\e u - \wt\J\e u\|_{\HS }\leq C{\eta\e^{1/2}}\|u\|_{\HS^1\e},&\quad\forall u\in\HS^1\e.
\end{align}
\end{lemma}

\begin{proof}
Let $u\in \HS^1\e$. Using Lemma~\ref{lm:Best} and \eqref{E} we get:
\begin{align*} 
\|\wt\J^1\e u - \wt\J\e u\|_{\HS }^2&=
\suml_{k\in\M}\|(\wt\J^1\e u)_0\|^2_{\L(B\ke)}
\leq 
C \eta\e\suml_{k\in\M}\|(\wt\J^1\e u)_0 \|^2_{\H^1(\B(R_k,z_k))}
\\&\leq
C \eta\e \|(\wt\J^1\e u)_0 \|^2_{\H^1(\Omega)}
\leq 
C_1 \eta\e\|u\|^2_{\H^1(\Omega\e^{\rm out})}
\leq 
C_1 \eta\e\|u\|^2_{\HS\e^1}.
\end{align*}
The lemma is proven.
\end{proof}

Now, we introduce the most tricky operator $\J\e^1:\HS^1\to\HS\e^1$.
Recall that $H\ke(x)$ is a  solution to the problem
\eqref{BVP:n3} if $n\ge 3$ or to the problem 
\eqref{BVP:n2} if $n= 2$; in the latter case we extend $H\ke(x)$  by $0$ to the whole $\R^2$.
Recall that the sets $B\ke^\pm$  are defined by \eqref{Bke+}--\eqref{Bke-}.

For  $f=(f_0,f_1,\dots,f_m)\in\HS^1$ with $f_0\in\H^1_0(\Omega)$ and $ f_k \in\C$, ${k\in\M}$  
we set
\begin{gather}
\label{J1}
(\J\e^1 f)(x)
\ceq
\begin{cases}\ds
f_0(x)-\suml_{k\in\M}\wh\phi\ke(x)f_0(x)\\ 
\ds +{1\over 2}H\ke(x)\phi\ke(x)
|B_{k,\eps}|^{-1/2}f_k,&x\in\Omega\e^{\rm out},\\\ds
\left(1-{1\over 2} H\ke(x)\phi\ke(x)\right)|B_{k,\eps}|^{-1/2}f_k,&x\in B\ke,\, k\in\M,\\\ds
{1\over 2}|B\ke|^{-1/2}f_k,&x\in D\ke,\, k\in\M.
\end{cases}
\end{gather}
Here $\wh\phi\ke$ and $\phi\ke$ are cut-off functions defined as follows:

\begin{itemize}
\item $\wh\phi\ke(x)=\begin{cases}
1,&|x-z_k|\le \ell_k d\ke, \\
\ds\frac
{\mathcal{G}(|x-z_k|)-\mathcal{G}(\ell_k\eps)}
{\mathcal{G}(\ell_k d\ke)-\mathcal{G}(\ell_k\eps)},
&\ell_k d\ke< |x-z_k|<\ell_k\eps, \\
0,&|x-z_k|\ge \ell_k\eps,
\end{cases}
$  \\[3pt]
where   the function $\mathcal{G}:(0,\infty)\to\R$ is given by
$$
\mathcal{G}(t)\ceq 
\begin{cases}
t^{2-n},&n\ge 3,\\
\ln t,& n=2,
\end{cases}
$$

\item 
$\phi\ke(x)=\phi\left({|x-z_k|\over \eps}\right)$, where $\phi:(0,\infty)\to\R$ is a smooth function
satisfying
\begin{gather*}
\phi(t)=1\text{ if }t\le \ell_k,\qquad \phi(t)=0\text{ if }t\ge \rho_k.
\end{gather*}
\end{itemize}
It is easy to see that 
$(\J\e^1 f)\restr_{\Omega_{\eps}^{\rm out}}\in\H^1(\Omega_{\eps}^{\rm out})$ and 
$(\J\e^1 f)\restr_{B\ke}\in \H^1(B\ke)$ for each $k\in\M$.
Moreover, the traces of 
 $(\J\e^1 f)\restr_{B_{k,\eps}}$  and $(\J\e^1 f)\restr_{\Omega_{\eps}^{\rm out}}$ 
on $D\ke$  coincide -- they both equal to the constant ${1\over 2}|B\ke|^{-1/2}f_k$.
Hence  $\J\e^1 f\in\H^1(\Omega\e)$. Finally, since $\J\e^1 f=f_0$ on
$\Omega\setminus \overline{\cup_{k\in\M}\B(R_k,z_k)}$, one gets   $\J\e^1 f\in \H^1_{0,\partial\Omega\e}(\Omega\e)$.
Thus \eqref{J1}  provides a well-defined linear operator 
from $\HS^1$ to $\HS\e^1$.

To proceed further, 
we need pointwise estimates on the function $H\ke$.
Recall that $\B(\ell_k d\ke,z_k)$ is the smallest ball containing $D\ke$.

\begin{lemma}[{\cite[Lemma~2.4]{MK06}}]
\label{lemma:Hest}
Let $x\in\R^n\setminus\overline{\B(\ell_k d\ke,z_k)}$ with 
\begin{gather}\label{C0}
\rho(x)\geq C_0 d\ke\text{ if }n\geq 3\text{\quad and\quad  }
\rho(x)\geq \exp(-C_0 {|\ln d\ke|^{1/2}})\text{ if n=2},
\end{gather} 
where $\rho(x)=|x-z_k|-\ell_k d\ke$ is the distance from $x$ to $\B(\ell_k d\ke,z_k)$, $C_0>0$ is a constant. 
Then
\begin{gather}\label{H:estimates}
\begin{array}{lll}
|H\ke(x)|\leq 
C\dfrac{ (d\ke)^{n-2}}{(\rho(x))^{n-2}},&
|\nabla H\ke(x)|\leq 
C\dfrac{ (d\ke)^{n-2}}{(\rho(x))^{n-1}}&\text{as }n\geq 3,\\[2ex]
|H\ke(x)|\leq 
C\dfrac{ |\ln d\ke|^{-1}}{|\ln  \rho(x) |^{-1} },&
|\nabla H\ke(x)|\leq 
C\dfrac{ |\ln d\ke|^{-1} }{  \rho(x) }&\text{as }n=2,
\end{array}
\end{gather}
where $C>0$ is independent on $d\ke$ and $\eps$ (but may depend on $C_0$).
\end{lemma}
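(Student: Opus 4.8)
The plan is to reduce the statement to a standard decay estimate for a harmonic function, obtained by combining the maximum principle with explicit comparison functions and with interior (Cauchy-type) gradient estimates for harmonic functions, and then to transfer it back by undoing a scaling.

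\emph{The case $n\ge 3$.} Only one length scale is involved, so I would rescale $y=(x-z_k)/d\ke$. This turns $H\ke$ into the $\eps$-independent capacitary potential $\wh H_k$ of $D_k$ (harmonic in $\R^n\setminus\overline{D_k}$, equal to $1$ on $\overline{D_k}$, vanishing at infinity) and turns $\B(\ell_k d\ke,z_k)$ — the smallest ball around $z_k$ containing $D\ke$ — into $\B(\ell_k,0)$, whose closure contains $\overline{D_k}$. Since $(\ell_k/|y|)^{n-2}$ is harmonic outside $\overline{\B(\ell_k,0)}$, equals $1$ on $\partial\B(\ell_k,0)$, decays at infinity, and $\wh H_k\le 1$ on $\partial\B(\ell_k,0)$, the maximum principle on the unbounded domain $\R^n\setminus\overline{\B(\ell_k,0)}$ (legitimate since both functions vanish at infinity) yields $0\le\wh H_k(y)\le(\ell_k/|y|)^{n-2}$ for $|y|\ge\ell_k$. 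The interior gradient estimate for harmonic functions, applied on the ball $\B\bigl((|y|-\ell_k)/2,\,y\bigr)$ (which stays outside $\overline{\B(\ell_k,0)}$), then gives $|\nabla\wh H_k(y)|\le C\,(|y|-\ell_k)^{-1}|y|^{-(n-2)}$. Undoing the scaling, and using that the hypothesis \eqref{C0} forces $|x-z_k|\le(1+\ell_k/C_0)\,\rho(x)$ so that $|x-z_k|$ and $\rho(x)$ are comparable, one obtains both bounds in \eqref{H:estimates} with a constant depending on $C_0$.

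\emph{The case $n=2$.} Now the Dirichlet condition on $\partial\B(1,z_k)$ cannot be scaled away, so I would argue directly on the annulus $A\ke=\{\ell_k d\ke<|x-z_k|<1\}$, on which $H\ke$ is harmonic (outside $A\ke$, $H\ke$ vanishes and the bounds are trivial). Comparison with the radial function $w(x)=\ln|x-z_k|/\ln(\ell_k d\ke)$ — harmonic, equal to $1$ on the inner circle (hence $\ge H\ke$ there) and to $0$ on the outer circle — gives $0\le H\ke\le w$ on $A\ke$, and together with $|\ln(\ell_k d\ke)|\ge\tfrac12|\ln d\ke|$ and $|\ln|x-z_k||\le C|\ln\rho(x)|$ (valid once $\rho(x)\gg d\ke$) this yields the value estimate. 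For the gradient I would split $H\ke=\overline H(|x-z_k|)+g\ke$ into its circular mean and a remainder. A flux identity — the flux of $H\ke$ across $\{|x-z_k|=r\}$ equals $-\capty(D\ke)$ by the very definition of capacity — identifies $\overline H(r)=\tfrac1{2\pi}\capty(D\ke)\,|\ln r|$, hence $|\nabla\overline H(|x-z_k|)|\le C\,(|x-z_k|\,|\ln d\ke|)^{-1}$. The remainder $g\ke$ is harmonic in $A\ke$, vanishes on the outer circle and is bounded by $1$ on the inner circle, so a Fourier--Laurent expansion gives $|g\ke(x)|\le C\,d\ke/|x-z_k|$ for $|x-z_k|\ge 2\ell_k d\ke$, and an interior gradient estimate gives $|\nabla g\ke(x)|\le C\,d\ke/|x-z_k|^{2}$. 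The precise hypothesis $\rho(x)\ge\exp(-C_0|\ln d\ke|^{1/2})$ is exactly what makes $|x-z_k|\sim\rho(x)$ and forces $d\ke/|x-z_k|\ll|\ln d\ke|^{-1}$, so the two $g\ke$-contributions are dominated by the radial part and $|\nabla H\ke(x)|\le C\,(\rho(x)|\ln d\ke|)^{-1}$ follows.

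The routine ingredients — the interior/Cauchy estimates, the Fourier--Laurent bound on $g\ke$, and the elementary comparisons between $|x-z_k|$, $\rho(x)$ and $d\ke$ — I would not spell out. The one genuinely delicate point, where I expect to spend the most care, is the $n=2$ gradient bound: because the window $D\ke$ is a lower-dimensional ``crack'' one cannot insert an isotropic barrier from inside, so the sharp two-sided control must instead come from isolating the exact logarithmic profile of the circular average $\overline H$ via the flux/capacity identity and then verifying that the non-radial remainder is genuinely smaller by a power of $d\ke/|x-z_k|$ and is absorbed by the stated hypothesis on $\rho(x)$. (Should the Fourier estimate turn awkward near $\partial\B(1,z_k)$, an alternative is to sandwich $D\ke$ between two straight segments using domain monotonicity of capacitary potentials and to invoke the explicit Joukowski conformal map for a segment.)
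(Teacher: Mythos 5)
The paper itself gives no proof of this lemma: it is taken verbatim, with the citation, from \cite[Lemma~2.4]{MK06}, so there is no in-paper argument for you to match. Your blind proof is nonetheless correct in its overall structure and in all the places you spell out. For $n\ge 3$, the rescaling $y=(x-z_k)/d\ke$, the barrier comparison with $(\ell_k/|y|)^{n-2}$ on $\{|y|>\ell_k\}$ (legitimate because both functions decay at infinity), and the Cauchy gradient estimate on $\B\bigl((|y|-\ell_k)/2,\,y\bigr)$ give exactly $|\wh H_k(y)|\le(\ell_k/|y|)^{n-2}$ and $|\nabla\wh H_k(y)|\le C(|y|-\ell_k)^{-1}|y|^{-(n-2)}$; undoing the scaling and using only $|x-z_k|\ge\rho(x)$ already produces \eqref{H:estimates}, so the hypothesis $\rho(x)\ge C_0d\ke$ is not even needed for your argument here (it is consistent with it being a harmless uniformity condition inherited from \cite{MK06}). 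For $n=2$, the decomposition into circular mean and remainder is the right one: the flux identity $2\pi r\,\overline H'(r)=-\capty(D\ke)$ follows from Green's formula applied to $H\ke$ in the region between $\overline{D\ke}$ and the circle of radius $r$, which together with $\overline H(1)=0$ gives the exact logarithmic profile $\overline H(r)=\tfrac1{2\pi}\capty(D\ke)|\ln r|$; the Fourier--Laurent bound $|g\ke(x)|\le Cd\ke/|x-z_k|$ for $|x-z_k|\ge 2\ell_kd\ke$ checks out (Bessel's inequality on the inner circle plus the ratio $(r^m-r^{-m})/(r_0^m-r_0^{-m})\le C(r_0/r)^m$); and the hypothesis $\rho(x)\ge\exp(-C_0|\ln d\ke|^{1/2})$ more than suffices to make $d\ke/|x-z_k|\ll|\ln d\ke|^{-1}$, so the remainder is absorbed. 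The one genuine loose end you flag yourself — the Cauchy estimate for $\nabla g\ke$ degenerates as $|x-z_k|\to 1$ — is easily repaired: extend $g\ke$ by odd Kelvin reflection across $\partial\B(1,z_k)$, which produces a harmonic function on $\{\tfrac12<|x-z_k|<2\}$ bounded by $Cd\ke$, whence the interior gradient estimate gives $|\nabla g\ke(x)|\le Cd\ke$ near the outer circle, comfortably below the target $C|\ln d\ke|^{-1}/\rho(x)$. So no conformal-map fallback is needed, and the proposal is correct.
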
 

\begin{remark}\label{rem:Hest}
Let $x\in \R^n$ such that $|x-z_k|\ge \ell_k\eps$; then $x$ satisfies \eqref{C0} with
$$C_0=
\begin{cases}
\ell_k(\kappa^{-1}-1),&n\ge 3,\\
{\sup}_{\eps\in (0,\eps_0]}\big(C\e^{1/2}\eps|\ln (\ell_k\eps(1-\kappa))|\big),&n=2,
\end{cases}
$$
where $\kappa\ceq \sup_{\eps\in(0,\eps_0]} {d\e  \eps^{-1}}<1$ (see \eqref{deps<1}), $C\e>0$ is a constant standing in \eqref{gamma:2} and satisfying $\sup_{\eps\in (0,\eps_0]}C\e<\infty$ (see \eqref{CC}). 
\end{remark}

\begin{lemma}\label{lm:JJ:5}
One has 
\begin{align}
\label{JJ:5}
\|\J^1\e f - \J\e f\|_{\HS\e}\leq C\eta\e^{1/2}\|f\|_{\HS^1},&\quad\forall f\in\HS^1.
\end{align}
\end{lemma}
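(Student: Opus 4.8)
The plan is to estimate $\|\J^1\e f - \J\e f\|_{\HS\e}$ by comparing the two operators on each of the three pieces on which they are defined: $\Omega\e^{\rm out}$, the resonators $B\ke$, and the windows $D\ke$ (the last having measure zero and hence contributing nothing). First I would fix $f=(f_0,f_1,\dots,f_m)\in\HS^1$. On $B\ke$, by the definitions \eqref{J1} and of $\J\e$, the difference is $-\tfrac12 H\ke\phi\ke|B\ke|^{-1/2}f_k$; since $0\le H\ke\le 1$ (a maximum-principle bound on the capacitary potential) and $|\phi\ke|\le 1$, the $\L(B\ke)$-norm of this is bounded by $\tfrac12|B\ke|^{-1/2}|f_k|\,|B\ke|^{1/2}=\tfrac12|f_k|$ — but this is $O(1)$, not $O(\eta\e^{1/2})$, so a cruder bound will not suffice here. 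Instead I would use that $H\ke$ is pointwise small away from the tiny window: on $B\ke$ the region where $\phi\ke\neq 0$ is contained in $\B(\rho_k\eps,z_k)$ minus a neighbourhood of the window $\B(\ell_k d\ke,z_k)$, and there Lemma~\ref{lemma:Hest} (applicable by Remark~\ref{rem:Hest}, since $|x-z_k|\ge \ell_k d\ke$ and in fact one controls $\rho(x)$) gives $|H\ke(x)|\le C (d\ke)^{n-2}/(\rho(x))^{n-2}$ for $n\ge 3$, respectively $|H\ke(x)|\le C|\ln d\ke|^{-1}|\ln\rho(x)|$ for $n=2$. Integrating these bounds over the relevant annular region and invoking the scaling relations \eqref{gamma:2} (which tie $d\ke$ to $\eps^{n/(n-2)}$, resp. $|\ln d\ke|^{-1}$ to $\eps^2$) should produce a factor comparable to $\eta\e$, i.e. the $\L(B\ke)$-contribution is $\le C\eta\e^{1/2}|f_k|$, hence $\le C\eta\e^{1/2}\|f\|_{\HS^1}$ after summing over $k$.

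On $\Omega\e^{\rm out}$ the difference $(\J^1\e f - \J\e f)(x)=-\sum_{k}\wh\phi\ke(x)f_0(x)+\tfrac12\sum_k H\ke(x)\phi\ke(x)|B\ke|^{-1/2}f_k$. The first sum is supported in $\cup_k\{\ell_k d\ke<|x-z_k|<\ell_k\eps\}$, a set of volume $O(\eps^n)$; there $|\wh\phi\ke|\le 1$, so its $\L$-norm is $\le C\eps^{n/2}\|f_0\|_{\L(\cup_k\B(R_k,z_k))}$ — but that is again only $O(\eps^{n/2})$ in the sup of $f_0$ pointwise, and I cannot bound $f_0$ pointwise by its $\H^1$-norm in general dimension. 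The clean way is to combine Lemma~\ref{lm:Best}-type reasoning with the smallness of the support: split $f_0 = \la f_0\ra + (f_0 - \la f_0\ra)$ over $\B(R_k,z_k)$, use that $\wh\phi\ke$ integrated against a constant is $O(\eps^{n/2})\cdot|\text{const}|$, and absorb the oscillatory part via a Poincaré-type estimate on $\B(R_k,z_k)$; in either event the volume $O(\eps^n)=O(\eta\e)$ for $n\ge 3$ and $O(\eps^n)\le C\eta\e$ for $n=2$ does the job. The second sum (on $\Omega\e^{\rm out}$) is supported in $\cup_k\{\ell_k\eps\le|x-z_k|\le\rho_k\eps\}\setminus B\ke$, i.e. essentially $B\ke^+$ together with a sliver of $B\ke^-$ outside $B\ke$; again $|H\ke|$ is small there by Lemma~\ref{lemma:Hest}/Remark~\ref{rem:Hest}, and integrating $|H\ke|^2$ over this $O(\eps^n)$-volume region against the constant $|B\ke|^{-1/2}|f_k|$ (note $|B\ke|^{-1}=O(\eps^{-n})$) yields a contribution $\le C\eta\e^{1/2}|f_k|$.

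Collecting the pieces, $\|\J^1\e f-\J\e f\|_{\HS\e}^2\le C\eta\e\big(\|f_0\|_{\H^1(\Omega)}^2+\sum_k|f_k|^2\big)\le C\eta\e\|f\|_{\HS^1}^2$, which is \eqref{JJ:5}. The main obstacle I anticipate is the delicate bookkeeping of the pointwise estimates for $H\ke$ near the window: one must check that every point where $\phi\ke$ (or $\wh\phi\ke$) is active actually satisfies the hypothesis \eqref{C0} of Lemma~\ref{lemma:Hest} — this is precisely what Remark~\ref{rem:Hest} is for, with the constant $\kappa<1$ from \eqref{deps<1} ensuring $\ell_k\eps$ stays comfortably away from the window radius $\ell_k d\ke$ — and then to carry out the annular integrals $\int (d\ke)^{2(n-2)}\rho^{-2(n-2)}\,\rho^{n-1}\d\rho$ (resp. the logarithmic analogue in $n=2$) and match the outcome against $\eta\e$ using \eqref{gamma:2} and \eqref{CC}. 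Everything else is routine: the trace matching on $D\ke$ that makes $\J^1\e f\in\H^1(\Omega\e)$ is already recorded before the lemma, and $|B\ke|$ scales like $\eps^n$ so all the $|B\ke|^{\pm 1/2}$ factors are under control.
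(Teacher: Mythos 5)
Your plan follows the paper's broad strategy --- Lemma~\ref{lm:Best} for the $f_0$-contributions and the pointwise estimates of Lemma~\ref{lemma:Hest} for the $H\ke$-terms --- but there is a genuine gap in the treatment of the $H\ke\phi\ke$ pieces. You claim that the set $\{\phi\ke\neq 0\}\cap B\ke$ avoids a neighbourhood of the window. This is false: $\phi\ke\equiv 1$ on $\B(\ell_k\eps,z_k)$, and $\ell_k d\ke<\ell_k\eps$, so $\phi\ke$ equals $1$ all the way down to $D\ke$. Remark~\ref{rem:Hest} only guarantees that the hypothesis \eqref{C0} of Lemma~\ref{lemma:Hest} holds for $|x-z_k|\ge\ell_k\eps$; it is \emph{not} established for points arbitrarily close to the window, so the bounds \eqref{H:estimates} are not uniformly available on $\supp\phi\ke$. (The same slip occurs on $\Omega\e^{\rm out}$, where you restrict the support of the second sum to $\{\ell_k\eps\le|x-z_k|\le\rho_k\eps\}$; in fact $\phi\ke\neq0$ throughout $\B(\rho_k\eps,z_k)$.) Your annular integral therefore omits a region near $D\ke$ that would have to be handled separately, e.g.\ via the maximum-principle bound $0\le H\ke\le 1$ and the $\mathcal{O}((d\ke)^n)$ volume of that region; as written, the argument does not close.

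The paper sidesteps this entirely by never using pointwise bounds near the window. Since $\supp\phi\ke\subset\overline{\B(\rho_k\eps,z_k)}$, one has $H\ke\phi\ke\in\H^1_0(\B(\rho_k\eps,z_k))$, so the rescaled Poincar\'e inequality \eqref{Poincare:0} gives $\|H\ke\phi\ke\|^2_{\L(\B(\rho_k\eps,z_k))}\le C\eps^2\|\nabla(H\ke\phi\ke)\|^2$. Expanding the gradient and invoking the definition \eqref{capty} of capacity yields $\|\nabla(H\ke\phi\ke)\|^2\le C\bigl(\capty(D\ke)+\|H\ke\|^2_{\L(\supp\nabla\phi\ke)}\bigr)$: the first term is $\le C\eps^n$ by \eqref{cap:est}, and the second lives entirely on $\supp\nabla\phi\ke\subset\{|x-z_k|\ge\ell_k\eps\}$, precisely where Lemma~\ref{lemma:Hest} applies, giving \eqref{HL2}. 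Multiplying by $\eps^{-n}|f_k|^2$ this produces the needed $C\eta\e|f_k|^2$. One further simplification: the split $f_0=\la f_0\ra+(f_0-\la f_0\ra)$ over $\B(R_k,z_k)$ is unnecessary. Since $\supp\wh\phi\ke\cap\Omega\e^{\rm out}\subset B^+\ke$ and $|\wh\phi\ke|\le 1$, applying Lemma~\ref{lm:Best} directly to $f_0$ already gives $\|\wh\phi\ke f_0\|^2_{\L(\Omega\e^{\rm out})}\le\|f_0\|^2_{\L(B^+\ke)}\le C\eta\e\|f_0\|^2_{\H^1(\Omega)}$.
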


\begin{proof} 
Let $f=(f_0,f_1,\dots,f_m)\in\HS^1$ with $f_0\in \H^1_0(\Omega)$, $f_k\in\C$, $k\in\M$.
One has
\begin{gather*}
 \J^1\e f - \J\e f= 
 \begin{cases}\ds
\suml_{k\in\M}\left( -\wh\phi\ke(x)f_0(x) +{1\over 2}H\ke(x)\phi\ke(x)
|B_{k,\eps}|^{-1/2}f_k\right),&x\in\Omega\e^{\rm out},\\\ds
- {1\over 2} H\ke(x)\phi\ke(x) |B_{k,\eps}|^{-1/2}f_k,&x\in B\ke,\\\ds
{1\over 2}|B\ke|^{-1/2}f_k,&x\in D\ke,
\end{cases}
\end{gather*}
whence, using $|\wh\phi\ke|\leq C$ and   $|B\ke|=\eps^n|B_k|$,  we infer the estimate
\begin{gather}\label{JminusJ:2}
\|\J^1\e f - \J\e f \|^2_{\HS\e}\leq 
C\sum_{k\in\M}\left[\|f_0\|^2_{\L( B^+_{k,\eps})}
+\eps^{-n}|f_k|^2 \|H\ke\phi\ke  \|^2_{\L( \B(\rho_k \eps,z_k) )} \right].
\end{gather}
The first term in the right-hand-side of \eqref{JminusJ:2}
is estimated via Lemma~\ref{lm:Best}: 
\begin{gather}\label{poincare:f0}
\suml_{k\in\M}\|f_0\|^2_{\L( B^+_{k,\eps})}
\leq  
C \eta\e\suml_{k\in\M}\| f_0\|^2_{\H^1(\B(R_k,z_k))}
\leq
C \eta\e \|f_0\|^2_{\H^1(\Omega)}.
\end{gather}
Now, we estimate the second term.
One has the following  Poincar\'{e} inequality:
\begin{gather}\label{Poincare:0}
\forall v\in\H_0^1(\B(\rho_k \eps,z_k)):\quad
\|v\|^2_{\L(\B(\rho_k \eps,z_k))}\leq {\eps^2\over \lambda_1(-\Delta_{\B(\rho_k,0)}^D)} \|\nabla  u\|^2_{\L(\B(\rho_k \eps,z_k))}.
\end{gather}
Since   $\supp(\phi\ke) \subset\overline{\B(\rho_k \eps,z_k)}$,
one has $H\ke \phi\ke\in \H^1_0(\B(\rho_k \eps,z_k))$. 
Applying \eqref{Poincare:0} for $v\ceq H\ke \phi\ke$, and then using 
$|\phi\ke|\leq C$, $|\nabla\phi\ke|\leq C\eps^{-1}$ and
\eqref{capty},
we obtain
\begin{align}\label{Hphi:est}
\|H\ke \phi\ke\|^2_{\L( \B(\rho_k \eps,z_k) )}
&\leq C\eps^2\|\nabla(  H\ke \phi\ke )\|^2_{\L( \B(\rho_k \eps,z_k) )}
\\\notag
&\leq
C_1\left(\eps^2 \capty( {D\ke})+ 
\| H\ke\|^2_{\L(\supp(\nabla\phi\ke))}\right).
\end{align}
By virtue of \eqref{cap:rescaling1}--\eqref{gamma:2}, \eqref{CC} we have  
\begin{gather}\label{cap:est}
\capty( {D\ke})\leq C\eps^n.
\end{gather}
One has $\supp(\nabla\phi\ke)\subset \R^n\setminus\B(\ell_k\eps,z_k)$, whence
(see Remark~\ref{rem:Hest}) the estimates \eqref{H:estimates} fulfill for 
$x\in \supp(\nabla\phi\ke)$. Using these estimates, \eqref{gamma:2}, \eqref{CC},  and $|\supp(\nabla\phi\ke)|\leq C\eps^n$, we get
\begin{gather}\label{HL2} 
\|H\ke\|^2_{\L(\supp(\nabla\phi\ke))}
\leq 
C_1\eps^{4-n}\cdot\left.
\begin{cases}
{(d\ke)^{2n-4}},& n\ge 3 
\\
{|\ln\eps|^2}\cdot {|\ln d\ke|^{-2}}, & n=2 
\end{cases}\right\}\leq
C_2\eps^{n }\eta\e^2.
\end{gather}
The estimate \eqref{JJ:5} follows from \eqref{JminusJ:2}, \eqref{poincare:f0},
\eqref{Hphi:est}--\eqref{HL2} (in the case $n=2$ we also use $\eps^2\leq C \eta\e$, $C=|\ln\eps_0|^{-1}$ and $\eta\e<\eps$).
The lemma is proven.
\end{proof}

Before to proceed to the   last lemma we give several several further properties of the function $H\ke$.
Standard elliptic regularity theory yields
\begin{gather}
\label{H:regul}
H\ke\in \mathsf{C}^\infty(\mathbb{R}^n\setminus\overline{D\ke})\text{ if }n\geq 3
\quad\text{and}\quad 
H\ke\in \mathsf{C}^\infty(\B(1,z_k)\setminus\overline{D\ke})\text{ if }n=2. 
\end{gather}
Evidently, $H\ke$ is symmetric 
with respect to the hyperplane 
\begin{align}
\label{Gak}
\S_k\ceq \left\{x=(x',x^n)\in\R^n:\ x^n=z_k^n\right\},
\end{align}
i.e.
\begin{gather}
    \label{H:symm}
H\ke(x',z_k^n+\tau)=H\ke(x',z_k^n-\tau),\ \forall \tau>0.
\end{gather} 
From \eqref{H:regul}, \eqref{H:symm}  we deduce
\begin{gather}\label{H:n1}
\left.{\partial H\ke\over\partial x^n}\right|_{x^n=z^n_k+0}=\left.{\partial H\ke\over\partial x^n}\right|_{x^n=z^n_k-0}=0\text{ on } \S_k\setminus \overline{D\ke},\\
\label{H:n2}
\left.{\partial H\ke\over\partial x^n}\right|_{x^n=z^n_k+0}=-\left.{\partial H\ke\over\partial x^n}\right|_{x^n=z^n_k-0}\text{ on } D\ke.
\end{gather} 
Finally, using \eqref{BVP:n3}--\eqref{BVP:n2}
and the asymptotics (see Lemma~\ref{lemma:Hest})
\begin{gather}\label{decay}
| H\ke(x)|=\mathcal{O}(|x|^{2-n}),\quad
|\nabla H\ke(x)|=\mathcal{O}(|x|^{1-n})\quad\text{if }n\ge 3,
\end{gather}
we obtain
the  Green's identity
\begin{gather}\label{Green}
\int_{D\ke}\left(\left.{\partial H\ke\over\partial x^n}\right|_{x^n=z^n_k-0}-
\left.{\partial H\ke\over\partial x^n}\right|_{x^n=z^n_k+0}\right)\d x'=
\begin{cases}\ds
\|\nabla H\ke\|^2_{\L(\R^n)},&n\ge 3,\\\ds
\|\nabla H\ke\|^2_{\L(\B(1,z_k)},&n= 2,\\
\end{cases}
\end{gather}
where $\d x'$ stands for the area measure on $\partial D\ke$ (the decaying property \eqref{Green} is required
to guarantee that $\int_{\partial \B(R,0)}H\ke{\partial_n H\ke }\d s\to 0$
as $R\to \infty$).
From \eqref{capty}, \eqref{H:n2}, \eqref{Green} we infer
\begin{gather}\label{cap:D} 
\mathrm{cap}( {D\ke})=2\int_{D\ke}
\left.{\partial H\ke\over \partial x^n}\right|_{x^n=z_k^n-0}\d x'.
\end{gather}

\begin{lemma}\label{lm:JJ:6}
 One has  $\forall f\in \HS^2 ,\ u\in \HS^1\e$:
\begin{align}
\label{JJ:7}
\left|\a\e[u,\J^1\e f]-\a[\wt\J^{1}\e u,f]  \right| \leq 
C\left( \suml_{k\in\M}|\ga\ke-\ga_k|+(\eta\e)^{3/2}\eps^{-2}\right)\|f\|_{\HS^2 }\|u\|_{\HS^1\e}.
\end{align}
\end{lemma}

\begin{proof}
Let $f\in \HS^2$ and $u\in \HS^1\e$. We have
\begin{gather}\notag
\a\e[u,\J^1\e f]-\a[\wt\J^{1}\e u,f] = \suml_{k\in\M}\left(P\e^k+Q\e^k+R\e^k\right),
\end{gather}
where
\begin{align*}
P\e^k=&\,
-\suml_{k\in\M} \int_{ B^+_{k,\eps}}\nabla u\cdot \nabla\left(\wh\phi\ke \overline{f_0}\right)\d x,
\\
Q\e^k=&\,-\suml_{k\in\M} \int_{B\ke}\nabla (\wt\J^{1}\e u)\cdot \nabla \overline{f_0}\d x,
\\
R\e^k=&\,
  \suml_{k\in\M} 
 \left({1\over 2}\int_{ B^+\ke}\nabla u\cdot \nabla\left(H\ke\phi\ke
  \right) \d x\right. \\
  &\left.- {1\over 2}
\int_{B\ke^-}
\nabla u\cdot
\nabla \left(H\ke\phi\ke \right)\d x
-\gamma_k |B\ke| \la u \ra_{B\ke}\right) |B_{k,\eps}|^{-1/2}  \overline{f_k }.
\end{align*}

\paragraph{\it Estimate of $P\e^k$}
Using standard theory of elliptic PDEs (see, e.g., \cite{Ev98}), we infer
$\dom(\Delta_\Omega^D)\xhookrightarrow{}\H^2(\Sigma')$ 
for any bounded domain $\Sigma'\ssubset\Omega$. In particular,
there exists $C>0$ such that
\begin{gather}\label{h}
\forall h\in\dom(\Delta_\Omega^D):\quad
\|h\|_{\H^2(\B(R_k,z_k))}\leq C\|-\Delta h + h\|_{\L(\Omega)}.
\end{gather}
One has
\begin{gather}\label{P}
P\e^{k}=
-\suml_{k\in\M} \underbrace{\int_{ B^+_{k,\eps}}(\nabla u\cdot \nabla \overline{f_0})\wh\phi\ke\d x}_{P\e^{k,1}\ceq}
-\suml_{k\in\M} \underbrace{\int_{ B^+_{k,\eps}}(\nabla u\cdot \nabla \wh\phi\ke)
\overline{f_0}\d x}_{P\e^{k,2}\ceq}.
\end{gather}
Using Lemma~\ref{lm:Best}  and \eqref{h}, and taking into account that $|\wh\phi\ke|\leq C$,  we estimate
the first integral in \eqref{P} as follows:
\begin{align}\label{P1} 
|P\e^{k,1}|&\leq \|\nabla u\|_{\L( B^+_{k,\eps})}
 \|\nabla f_0\|_{\L( B^+_{k,\eps})}
  \leq 
C\eta\e^{1/2}\|\nabla u\|_{\L(\Omega\e)}
\|\nabla f_0\|_{\H^1(\B(R_k,z_k))} \\
\notag
&\leq C\eta\e^{1/2}\|\nabla u\|_{\L(\Omega\e)}\|f_0\|_{\H^2(\B(R_k,z_k))}\\
&\leq C_1\eta\e^{1/2}\|\nabla u\|_{\L(\Omega)}\|-\Delta f_0 + f_0\|_{\L(\Omega)}
\notag
\leq C_1\eta\e^{1/2}\|u\|_{\HS\e^1}\|f\|_{\HS^2}.
\end{align}
To estimate the second integral in \eqref{P} we use the generalised H\"older's inequality:
\begin{gather}\label{P2:start}
|P\e^{k,2}|\leq 
\|\nabla u\|_{\L( B^+_{k,\eps})} \|f_0\|_{\mathsf{L}^p( B^+_{k,\eps})}  
\|\nabla \wh \phi\ke\|_{\mathsf{L}^q( B^+_{k,\eps})},
\end{gather}
where $p,q\in [2,\infty]$ with ${1\over p}+{1\over q}={1\over 2}$ are chosen as follows:
\begin{gather}\label{pq}
\begin{array}{lll}
p={2n^2\over (n-2)^2},& q={n^2\over 2(n-1)}&\text{if }n\ge 3,
\\
p=\infty,& q=2&\text{if }n=2.
\end{array}
\end{gather}
{The Sobolev embedding theorem  \cite[Theorem~5.4]{Ad75} asserts that the space $\mathsf{L}^p(\B(R_k,z_k))$
is embedded continuously into the space $\H^2(\B(R_k,z_k))$ provided $p$ satisfies 
$1\leq p\leq \frac{2n}{n-4}$ if $n\geq 5$,
$1\leq p<\infty$ if $n=4$, and
$1\le p\le \infty$ if $n=2,3$; it is easy to check that 
$p$ defined by \eqref{pq} falls within the above restrictions.}
Using this and \eqref{h}, we get
\begin{align} \label{sobolev}
\|f_0\|_{\mathsf{L}^p( B^+\ke)}&\le 
\|f_0\|_{\mathsf{L}^p(\B(R_k,z_k))}\le 
C\|f_0\|_{\mathsf{H}^2(\B(R_k,z_k))}
\\&\leq
C_1\|-\Delta f_0+f_0\|_{\L(\Omega)}\leq
C_1\|f\|_{\HS^2}.\notag 
\end{align}
Furthermore, via direct calculations we obtain 
\begin{gather}\label{phi:est}
\|\nabla \wh \phi\ke\|_{\mathsf{L}^q( B^+\ke)}\leq
C\left.\begin{cases} 
(d\e)^{1-2/n},&n\ge 3
\\
|\ln d\e|^{-1/2},&n=2
\end{cases}\right\}\leq C_1\eps
\end{gather}
(the last inequality   is valid   by virtue of \eqref{gamma:2}, \eqref{CC}). 
It follows from \eqref{P2:start}, \eqref{sobolev}, \eqref{phi:est} 
that
\begin{gather}\label{P2}
|P\e^{k,2}|\leq
C\eps\|u\|_{\HS^1\e}\|f\|_{\HS^2}\leq 
C_1\eta\e^{1/2}\|u\|_{\HS^1\e}\|f\|_{\HS^2}.
\end{gather}
Combining \eqref{P1} and \eqref{P2}, we finally arrive at the estimate
\begin{gather}\label{P:final}
|P\e^{k}|\leq
C\eta\e^{1/2}\|u\|_{\HS^1\e}\|f\|_{\HS^2}.
\end{gather}
\medskip

\paragraph{\it Estimate of $Q\e^k$}
One has:
\begin{align}\label{Q:first}
|Q\e^k|^2&\leq
 {\suml_{k\in\M}\|\nabla (\wt J^1\e u)\|^2_{\L(B\ke)}}
 {\suml_{k\in\M}\|\nabla f_0\|^2_{\L(B\ke)}} 
 \\ &\leq
  \|\wt J^1\e u\|^2_{\H^1(\Omega)}
 {\suml_{k\in\M}\|\nabla f_0\|^2_{\L(B\ke)}}.\notag
\end{align}
Due to \eqref{E}, we have
\begin{gather}\label{Q1}
\|\wt\J\e^1 u\|^2_{\H^1(\Omega)}\leq 
C\| u\|^2_{\H^1(\Omega_{\eps}^{\rm out})}\leq 
C\|u\|^2_{\HS^1\e}.
\end{gather}
Lemma~\ref{lm:Best} and \eqref{h} yield
\begin{align}\label{Q2}
\|\nabla f_0\|_{\L( B_{k,\eps})}^2
& \leq 
C \eta\e\|\nabla f_0\|^2_{\H^1(\B(R_k,z_k))}
\leq C \eta\e \|f_0\|_{\H^2(\B(R_k,z_k))}\\ \notag 
&\leq C_1 \eta\e \|-\Delta f_0 + f_0\|^2_{\L(\Omega)}
\leq C_1 \eta\e \|f\|^2_{\HS^2}.
\end{align}
Combining \eqref{Q:first}--\eqref{Q2} we get the estimate
\begin{gather}\label{Q:final}
|Q\e^k|\leq C \eta\e^{1/2}\|u\|_{\HS\e^1}\|f\|_{\HS^2}.
\end{gather}

\paragraph{\it Estimate of $R\e^k$}
Recall that the hyperplane $\S_k$ is defined by \eqref{Gak}. 
The cut-off function $\phi\ke=\phi(|\cdot-z_k|\eps^{-1})\in C^\infty(\R^n)$  has the following properties:
\begin{gather}\label{phi:n}
\begin{array}{c}
\ds{\partial \phi\ke\over\partial x^n}=0\text{ on } \S_k,\\[2mm]
\supp(\phi\ke)\subset \overline{\B(\rho_k\eps,z_k)},\quad
\phi\ke=1\text{ in a neighbourhood of }D\ke.
\end{array}
\end{gather}
Then, using \eqref{H:n1} and \eqref{phi:n},
we obtain via partial integration:
\begin{multline}\label{byparts1}
\int_{ B^+\ke}\nabla u\cdot \nabla\left(H\ke\phi\ke\right) \d x\\
=
-\int_{ B^+\ke} u\cdot \Delta\left(H\ke\phi\ke\right) \d x -
\int_{D\ke} 
u\left.{\partial H\ke\over\partial x^n}\right|_{x^n=z^n_k+0}\d x'.
\end{multline}
Similarly, using \eqref{H:n1} and \eqref{phi:n}, we get
\begin{align}\label{byparts2}
&\int_{ B^-\ke}\nabla u\cdot \nabla\left(H\ke\phi\ke\right) \d x
\\\notag
=&
-\int_{  B^-\ke} u\cdot \Delta\left(H\ke\phi\ke\right) \d x +
\int_{D\ke} 
u\left.{\partial H\ke\over\partial x^n}\right|_{x^n=z^n_k-0}\d x'
\\\notag
=&-\int_{  B^-\ke} (u-\la u\ra_{B\ke})\cdot \Delta\left(H\ke\phi\ke\right) \d x 
\\\notag
-&
\la u\ra_{B\ke}\int_{  B^-\ke} \Delta\left(H\ke\phi\ke\right) \d x
+
\int_{D\ke} 
u\left.{\partial H\ke\over\partial x^n}\right|_{x^n=z^n_k-0}\d x'
\\\notag
=&
-\int_{  B^+\ke} (u-\la u\ra_{B\ke})\cdot \Delta\left(H\ke\phi\ke\right) \d x  
\\&-
\la u\ra_{B\ke}\int_{D\ke} 
\left.{\partial H\ke\over\partial x^n}\right|_{x^n=z^n_k-0}\d x'
+
\int_{D\ke} 
u\left.{\partial H\ke\over\partial x^n}\right|_{x^n=z^n_k-0}\d x'.\notag
\end{align}
Combining \eqref{byparts1} and \eqref{byparts2} and taking into account \eqref{H:n2} and \eqref{cap:D},
we get the equality
\small
\begin{multline*}
R\e^k=
  \suml_{k\in\M} 
 \underbrace{\left({1\over 4}\capty( {D\ke})
-\gamma_k |B\ke|\right) \la u \ra_{B\ke} |B_{k,\eps}|^{-1/2}  \overline{f_k }}_{R\e^{k,1}\ceq}\\
+{1\over 2}\suml_{k\in\M}
\underbrace{\left(-\int_{ B^+\ke} u\cdot \Delta\left(H\ke\phi\ke\right) \d x  
+\int_{  B\ke^-} (u-\la u\ra_{B\ke})\cdot \Delta\left(H\ke\phi\ke\right) \d x \right)|B_{k,\eps}|^{-1/2}  \overline{f_k}}_{R\e^{k,2}\ceq}
\end{multline*}\normalsize
Due to \eqref{gamma:1} and \eqref{wtJ}, one has 
$R\e^{k,1} = \left(\gamma\ke
-\gamma_k  \right) (\wt \J\e u)_k \overline{f_k }$, whence, taking into account
that $\|\wt\J\e u\|_{\HS}\leq \|u\|_{\HS\e}$ (this follows immediately from \eqref{JJ:1}--\eqref{JJ:2}),
we obtain
\begin{align}\label{R1:final}
|R\e^{k,1}|\leq  
C|\gamma\ke-\gamma_k|   \cdot \|\wt \J\e u\|_{\HS}\cdot\|f\|_{\HS}
&\leq 
C_1|\gamma\ke-\gamma_k|   \cdot \|u\|_{\HS\e}\cdot\|f\|_{\HS}
\\ \notag
&\leq 
C_1|\gamma\ke-\gamma_k|   \cdot \|u\|_{\HS\e^1}\cdot\|f\|_{\HS^2}.
\end{align}
It remains to estimate the term $R\e^{k,2}$.
One has, taking into account $B\ke^-\subset B\ke$:
\begin{align*}
|R\e^{k,2}|^2&\leq  
2\left(\|u\|_{\L( B^+\ke)}^2 +
\|u-\la u\ra_{B\ke}\|_{\L(B\ke)}^2\right)\times\\
&\times
\|\Delta\left(H\ke\phi\ke\right)\|_{\L(\B(\rho_k\eps,z_k))}^2
|B_{k,\eps}|^{-1}  |f_k|^2
\end{align*}
Applying Lemma~\ref{lm:Best} and \eqref{E},   we get
\begin{align} 
\|u\|_{\L( B^+\ke)}^2=
\|\wt\J\e^1 u\|_{\L( B^+\ke)}^2
\leq 
C \eta\e \|\wt\J\e^1 u\|^2_{\H^1(\B(R_k,z_k))}\leq
C_1 \eta\e\|u\|_{\HS\e^1}^2.
\label{u:whB}
\end{align}
Further, since
$\Delta H\ke=0$, one has
\begin{gather}\label{Leibniz}
\Delta\left(H\ke\phi\ke\right)=
2\nabla H\ke\cdot \nabla \phi\ke+
H\ke\Delta \phi\ke.
\end{gather}
One has
$\supp(\nabla\phi\ke)\cup\supp(\Delta\phi\ke)\subset \R^n\setminus\B(\ell_k\eps,z_k)$,  whence (see~Remark~\ref{rem:Hest})
the estimates \eqref{H:estimates} hold
for $x\in \supp(\nabla\phi\ke)\cup\supp(\Delta\phi\ke)$. Using them,  \eqref{gamma:2} and \eqref{CC}, we get
\begin{gather}\label{HgradH:est}
x\in\supp(\nabla\phi\ke)\cup\supp(\Delta\phi\ke):\quad
|(\nabla H\ke)(x)|\leq C\eps,\ 
|H\ke(x)|\leq 
C\eta\e.
\end{gather}
It follows from \eqref{Leibniz}, \eqref{HgradH:est},  
$|\nabla\phi\ke|\leq C\eps^{-1}$, $|\Delta\phi\ke|\leq C\eps^{-2}$,
$|\B(\rho_k\eps,z_k)|\leq C\eps^n$ that
\begin{gather}
\label{DeltaHphi:est}
\|\Delta\left(H\ke\phi\ke\right)\|_{\L(\B(\rho_k\eps,z_k))}^2
\leq C\eps^{n-4} \eta\e^2.
\end{gather}
Combining \eqref{Poincare}, \eqref{u:whB}, \eqref{DeltaHphi:est} and taking into account that
$|B\ke|= C\eps^n$, we get
\begin{gather}\label{R2:final}
|R\e^{k,2}|\leq C\eta\e^{3/2}\eps^{-2}\|u\|_{\HS^1\e}\|f\|_{\HS}\leq C\eta\e^{3/2}\eps^{-2}\|u\|_{\HS^1\e}\|f\|_{\HS^2}.
\end{gather}
Finally, from \eqref{R1:final} and \eqref{R2:final}, we conclude
\begin{gather}\label{R:final}
|R\e^{k}|\leq C\left(\suml_{k\in\M}|\ga\ke-\ga_k|+\eta\e^{3/2}\eps^{-2}\right)\|u\|_{\HS^1\e}\|f\|_{\HS^2}.
\end{gather}
The required estimate \eqref{JJ:7} follows from \eqref{P:final}, \eqref{Q:final}, \eqref{R:final}
(in the case $n=2$ we also use $\eta\e^{1/2}\leq C \eta\e^{3/2}\eps^{-2}$).
The lemma is proven.
\end{proof}

\subsection{End of proofs of Theorems~\ref{th1} and \ref{th2}\label{subsec4:3}}

{
It follows from  
\eqref{JJ:1}--\eqref{JJ:4}, \eqref{JJ:6}, \eqref{JJ:5},  \eqref{JJ:7}
that  conditions 
\eqref{AAA:1}--\eqref{AAA:7} of Theorem~\ref{thAAA} hold with 
$$\delta\e\ceq C\left(\suml_{k\in\M}|\ga\ke-\ga_k|+(\eta\e)^{3/2}\eps^{-2}\right)=\begin{cases}
\ds C \sum_{k\in\M}|\ga\ke-\ga_k|+C\eps ,&n\ge 3,\\
\ds C \sum_{k\in\M}|\ga\ke-\ga_k|+C\eps|\ln\eps|^{3/2} ,&n=2.
\end{cases}$$
(in the case $n=2$ we also use  
$\eps\leq C \eta\e^{3/2}\eps^{-2}$,
$\eta\e^{1/2}\leq C \eta\e^{3/2}\eps^{-2}$).
Then, by virtue of Theorem~\ref{thAAA} and the equality
$$
\|\psi\e - \J\e \psi  \|_{\HS\e}^2=
\|\psi\e - \psi_0\|^2_{\L(\Omega\e\setminus \overline{\cup_{k\in\M}B\ke})}+
\suml_{k\in\M}\|\psi\e - |B\ke|^{-1/2}\psi_k\|^2_{\L(B_{k,\eps})},$$
we immediately obtain  all statements of Theorems~\ref{th1} and \ref{th2}.
}

\subsection{Absence of outside convergence of discrete spectrum\label{subsec4:4}}
In this subsection we present an example showing that 
the property $\sigma_\disc(\A\e)\searrow\sigma_\disc(\A)$ does not always hold true.

Let   $\Omega=\Omega'\times\R$ be a tubular domain with a bounded Lipschitz
cross-section $\Omega'\subset \R^{n-1}$. To simplify presentation, we restrict ourselves to the case $n\ge 3$.   The spectrum  of the Dirichlet Laplacian 
$-\Delta_\Omega^D$ on $\Omega$ is purely essential, namely 
\begin{gather}\label{Delta:spec}
\sigma(-\Delta_\Omega^D)=\sigma_\ess(-\Delta_\Omega^D)=[\Lambda',\infty),
\end{gather}
where $\Lambda'\ceq \lambda_1(-\Delta^D_{\Omega'})>0$.
As before, we define the domain $\Omega\e$ by \eqref{Omega:e}, \eqref{DBe}, \eqref{Dk}
with $m=1$ (i.e., only one resonator is inserted), arbitrary sets $B_1\subset\R^n$ and $\wt D_1\subset\R^{n-1}$ such that \eqref{DB1}, \eqref{DB2} hold,
arbitrary $z_1\in\Omega$, and $d_{1,\eps}$ being chosen as follows:
\begin{gather*}
d_{1,\eps}=\left({4(\Lambda'-\eps^{1/2})|B_1|\over \mathrm{cap}(D_1)}\right)^{1\over n-2}\eps^{n\over n-2}.
\end{gather*}
With such a choice of $d_{1,\eps}$ we obtain, using \eqref{cap:rescaling2}:
\begin{gather}\notag
\gamma_{1,\eps}=\Lambda'-\eps^{1/2},\\\label{gaga2}
\text{consequently, }\gamma_1=\Lambda'.
\end{gather}
Let $\A$ be the corresponding limiting operator, see~\eqref{A:oplus}.
From  \eqref{A:oplus}, \eqref{Delta:spec}, \eqref{gaga2} we infer
\begin{gather}
\label{A:ess}
\sigma_\ess(\A)=[\Lambda',\infty),\quad
\sigma_\disc(\A)=\emptyset.
\end{gather}

Now, we look closely at the spectrum of $\A\e$. 
From \eqref{essAA} and \eqref{A:ess}  we deduce
\begin{gather}
\label{Ae:ess}
\sigma_\ess(\A\e)=[\Lambda',\infty).
\end{gather}
Furthermore, one can find such a function  
$v\e\in \mathrm{dom}(\a\e)$ that 
\begin{gather}\label{ve:RelQ}
{\a\e[v\e,v\e]\over \|v\e\|^2_{\L(\Omega\e)}}<
\Lambda'\text{ for sufficiently small $\eps$}.
\end{gather}
For convenience of presentation, we postpone 
the construction of $v\e$  and the justification of \eqref{ve:RelQ} 
to the end of this subsection.
Now, using  \eqref{Ae:ess}, \eqref{ve:RelQ} and
the min-max principle (see, e.g., \cite[Section~4.5]{D95}), we conclude that  
\begin{gather}
\label{Ae:disc}
[0,\Lambda')\cap\sigma_{\disc}(\A\e)\not=\emptyset \text{ for sufficiently small $\eps$.}
\end{gather}
From  \eqref{Ae:disc} we immediately infer that
there exists a sequence $(\lambda_{\eps_k})_{k\in\N}$ with $\lambda_{\eps_k}\in\sigma_{\disc}(\A_{\eps_k})$ converging to some   $\lambda_0\in [0,\Lambda']$ as $\eps_k\to 0$. By Theorem~\ref{th1} $\lambda_0\in\sigma(\A)$. On the other hand $\sigma(\A)\cap [0,\Lambda']=\{\Lambda'\}$, whence $\lambda_0=\Lambda'\in\sigma_{\ess}(\A)$, 
which means 
(cf.~Remark~\ref{rem:equiv}) that the property
$\sigma_\disc(\A\e)\searrow\sigma_\disc(\A)$ does not hold true.

It remains to construct the function $v\e$ satisfying \eqref{ve:RelQ}.
We choose it as follows: 
\begin{align*}
v\e=
\begin{cases}\ds
{1\over 2}H_{1,\eps}(x)\phi_{1,\eps}(x)
|B_{1,\eps}|^{-1/2},&x\in\Omega\e^{\rm out},\\\ds
\left(1-{1\over 2} H_{1,\eps}(x)\phi_{1,\eps}(x)\right)|B_{1,\eps}|^{-1/2},&x\in B_{1,\eps},\\\ds
{1\over 2}|B_{1,\eps}|^{-1/2},&x\in D_{1,\eps}, 
\end{cases} 
\end{align*}
i.e. $v\e=\J\e^1 g$, where 
$g=(0,1)\in\HS^1=\H^1_0(\Omega)\times\C$. 
It is easy to see that 
\begin{align*}
\a\e[v\e,v\e]
={1\over 4}|B_{1,\eps}|^{-1}  
\|\nabla (H_{1,\eps}\phi_{1,\eps})\|^2_{\L(\R^n )}
={1\over 4}|B_{1,\eps}|^{-1} \left(
\capty(D_{1,\eps})+U_{\eps,1}+U_{\eps,2}\right),
\end{align*}
where 
$$U_{\eps,1}\ceq\|\nabla (H_{1,\eps}(\phi_{1,\eps}-1))\|^2_{\L(\R^n )},\quad U_{\eps,2}\ceq 2(\nabla H_{1,\eps},\nabla (H_{1,\eps}(\phi_{1,\eps}-1)))_{\L(\R^n)}.$$
Taking into account that
$|\phi_{1,\eps}|\leq C$ and $|\nabla\phi_{1,\eps}|\leq C\eps^{-1}$,
we estimate $U_{1,\eps}$ as follows:
\begin{align}\label{inter:1}
\|\nabla (H_{1,\eps}(\phi_{1,\eps}-1))\|^2_{\L(\R^n )}\leq
C\left(\|\nabla H_{1,\eps}\|^2_{\L(\supp(\phi_{1,\eps}-1))}+
\eps^{-2}\|H_{1,\eps}\|^2_{\L(\supp(\nabla\phi_{1,\eps}))}\right).
\end{align}
One has (see~\eqref{HL2}; recall that we assume $n\ge 3$): 
\begin{gather}\label{HL2+} 
\|H\ke\|^2_{\L(\supp(\nabla\phi\ke))}
\leq C_1\eps^{n}\eta^2\e=C_1\eps^{n+4}.
\end{gather}
Similarly, using  Lemma~\ref{lemma:Hest} (note that $\supp(\phi_{1,\eps}-1)\subset\R^n\setminus\B(\ell_1\eps,z_1)$, whence the estimates \eqref{H:estimates} are valid due to Remark~\ref{rem:Hest}), we  get
\begin{gather}\label{nablaHL2}
\|\nabla H_{1,\eps}\|^2_{\L(\supp(\phi_{1,\eps}-1))}
\leq 
C\eps^{n+2}.
\end{gather}
Combining \eqref{inter:1}--\eqref{nablaHL2}, we obtain
\begin{gather}\label{U1}
U_{1,\eps}
\leq 
C\eps^{n+2}.
\end{gather}
Furthermore, using \eqref{nablaHL2} and \eqref{U1}, we arrive at the estimate for $U_{2,\eps}$:
\begin{gather}\label{U2}
|U_{2,\eps}|\leq 
2\|\nabla H_{1,\eps}\|_{\L(\supp(\phi_{1,\eps}-1))}
\sqrt{U_{1,\eps}} \leq 
C\eps^{n+2}.      
\end{gather}
Combining \eqref{U1} and \eqref{U2}, we infer
\begin{gather}\label{ae:asy1}
\a\e[v\e,v\e]=
{1\over 4}|B_{1,\eps}|^{-1}\left(\capty( {D_{1,\eps}})+\mathcal{O}(\eps^{n+2}) \right)=
\gamma_{1,\eps}+\mathcal{O}(\eps^2)=
\Lambda'-\eps^{1/2}+\mathcal{O}(\eps^2).
\end{gather}
Similarly, we derive the asymptotics
\begin{gather}\label{ae:asy2}
\|v\e\|^2_{\HS\e}=
1+\mathcal{O}(\eps).
\end{gather}
The required estimate \eqref{ve:RelQ} follows  from \eqref{ae:asy1}--\eqref{ae:asy2}.

\section{Waveguide with prescribed eigenvalues\label{sec:5}} 

The spectrum of the Dirichlet Laplacian on the straight tubular domain  
coincides with $[\Lambda',\infty)$, where $\Lambda'>0$ is the smallest eigenvalue of the Dirichlet Laplacian on the tube cross-section. Let us perturb this tube by narrowing   
its bounded part and then by inserting $m$ resonators within this narrowed part.
Such a perturbation does not change the essential spectrum, but may produce 
discrete eigenvalues below $\Lambda'$. Our goal is to show that these eigenvalues can be made coinciding with prescribed numbers via a suitable choice of the parameters $d\ke$.
Below we formulate the problem and the   result precisely.

Let $\Omega'$ and $\Omega''$  be bounded Lipschitz domains in $\R^{n-1}$ ($n\ge 2$) such that $\overline{\Omega''}\subset\Omega'$.
Let $L>0$. We introduce the following domains in $\R^n$:
$$
\wt\Omega\ceq \Omega''\times (-L,L),\quad 
\Omega^+\ceq \Omega'\times (L,\infty),\quad 
\Omega^-\ceq \Omega'\times(-\infty,-L),\quad
S^\pm=\Omega''\times\{\pm L\}.
$$
Further, let $S\ke\subset \wt\Omega$, $k\in\M=\{1,\dots,m\}$
be the sets we introduced in Section~\ref{sec:2}, namely
\begin{gather*}
S\ke=(\partial(\underbrace{\eps B_k + z_k}_{B\ke}))\setminus (\underbrace{d\ke D_k+z_k}_{D\ke}).
\end{gather*}
Here $\eps>0$,
$(z_k)_{k\in\M}$ are pairwise distinct points in $\wt\Omega$, 
$(B_k)_{k\in\M}$ and $(D_k)_{k\in\M}$ 
are sets in $\R^n$ satisfying \eqref{DB1}--\eqref{DB2}, finally, the numbers $(d\ke)_{k\in\M}$   are specified as follows:
\begin{gather} 
 d\ke=
\begin{cases}
 d_k\eps^{n\over n-2},&n\ge 3,
\\
\exp\left(-{1\over   d_k\eps^2}\right),&n=2,
\end{cases}\label{dke}
\end{gather}
where $d_k$ are positive constants. The parameter $\eps$ is supposed to be sufficiently small 
in order to have
$\overline{B\ke}\subset \wt\Omega $ and $\overline{B\ke}\cap \overline{B_{l,\eps}}=\emptyset$ if $k\not=l$.
We set
$$\wt\Omega\e \ceq \wt\Omega\setminus \left(\cupl_{k\in\M} S\ke\right).$$
Finally, we connect $\wt\Omega\e$ (via $S^\pm$) with $\Omega^-$ and $\Omega^+$, and
arrive at the domain  (see Figure~\ref{fig-waveguide})
$$
\Omega\e\ceq   \Omega^-\cup S^-\cup\wt\Omega\e\cup S^+\cup\Omega^+
$$
We also introduce the set 
$$
\Omega\ceq \Omega^-\cup S^-\cup\wt\Omega\cup S^+\cup\Omega^+.
$$

As before,   $\A\e$ stands for the operator acting in $\L(\Omega\e)$ and
being associated with the sesquilinear form \eqref{ae}, i.e. $\A\e$ is the Laplace operator on $\Omega\e$ subject to the Neumann boundary conditions on $\cup_{k\in\M}S\ke$ and the Dirichlet  boundary  conditions on  $\partial\Omega$.
Using standard methods of perturbation theory  one can easily demonstrate that the essential spectrum of $\A\e$ coincides with the essential spectrum of 
the Dirichlet Laplacian $-\Delta^D_{\Omega'\times\R}$ on the unperturbed  waveguide $\Omega'\times\R$,  
i.e.
\begin{gather}\label{ess:ess:1}
\sigma_\ess(\A\e)=\sigma_\ess(-\Delta_{\Omega'\times\R}^D)=[\Lambda',\infty),
\text{ where } \Lambda'\ceq \lambda_1(-\Delta_{\Omega'}^D).
\end{gather}
Thus  the discrete spectrum of $\A\e$ (if any) belongs to $[0,\Lambda')$.
The theorem below asserts that, under  a suitable choice  of the constants $d_k$ in \eqref{dke},
$\sigma_{\disc}(\A\e)$ has $m$ simple eigenvalues, which coincide with prescribed numbers.
The role of the local narrowing is to guaranteed that
there are no further eigenvalues in the vicinity of $\Lambda'$.
Note that solely  the local narrowing will not change the spectrum. Indeed, 
similarly to \eqref{ess:ess:1}, we get
\begin{gather*}
\sigma_\ess(-\Delta_{\Omega}^D)=\sigma_\ess(-\Delta_{\Omega'\times\R}^D)=[\Lambda',\infty).
\end{gather*}
Moreover, $-\Delta_{\Omega}^D$ has no eigenvalues below $\Lambda'$  
(this follows easily from 
$\sigma_{\disc}(-\Delta_{\Omega'\times\R}^D)=\varnothing$, 
$\Omega\subset\Omega'\times\R$
and the min-max principle).

To formulate the result we introduce the functions $\mathcal{F}_k:\R_+\to\R_+$, $k\in\M$, via
\begin{gather*}
\mathcal{F}_k(t)=
\begin{cases}
\ds\left(t{4|B_k|\over \mathrm{cap}(D_k)}\right)^{1\over n-2} ,&n\ge 3,\\
\ds t{2|B_k| \over \pi},&n=2.
\end{cases}
\end{gather*}
In the following, in order to emphasize the dependence of $\A\e$ (and its spectrum) on $ d_1,\dots,d_m$,
we will use the notation $\A\e^{d_1,\dots,d_m}$ instead of $\A\e$.

\begin{theorem}\label{th:exact}
Let $\wt\gamma_k$, $k\in\M$ be arbitrary numbers satisfying
\begin{gather*}
0<\wt\ga_1<\wt\ga_2<\dots<\wt\ga_m<\Lambda'.
\end{gather*}
Then for any $\delta>0$ there exist $\eps=\eps(\delta)>0$ and $\wt d_k\in (\mathcal{F}_k(\wt\ga_k)-\delta,\mathcal{F}_k(\wt\ga_k)+\delta)$, $k\in\M$
such that
$$
\sigma_{\disc}(\A\e^{\wt d_1,\dots,\wt d_m})=\cup_{k\in\M}\{\wt\ga_k \},
$$
and
 the eigenvalues $\wt\gamma_k$ are simple.
\end{theorem}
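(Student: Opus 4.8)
The strategy is to recast the assertion as a fixed point problem and solve it with the multi\-dimensional intermediate value theorem of \cite{HKP97}. The essential ingredients are the uniform spectral estimate \eqref{th1:est}, the monotonicity of the eigenvalues of $\A\e$ with respect to the window sizes, and Neumann bracketing to keep the number of eigenvalues below $\Lambda'$ equal to $m$. First I would insert the scaling \eqref{dke} into the capacity asymptotics \eqref{cap:rescaling1}--\eqref{cap:rescaling2} to obtain
$$
\gamma_k=\lim_{\eps\to 0}\frac{\capty(D\ke)}{4|B\ke|}=
\begin{cases}
\dfrac{d_k^{\,n-2}\capty(D_k)}{4|B_k|},&n\ge 3,\\[2.5mm]
\dfrac{\pi d_k}{2|B_k|},&n=2,
\end{cases}
$$
so that $\gamma_k$ depends only on $d_k$, the map $d_k\mapsto\gamma_k=\gamma_k(d_k)$ is a continuous strictly increasing bijection of $(0,\infty)$ onto itself, and $\mathcal{F}_k$ is precisely its inverse, whence $\gamma_k(\mathcal{F}_k(\wt\gamma_k))=\wt\gamma_k$. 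Given the targets $\wt\gamma_1<\dots<\wt\gamma_m<\Lambda'$ and $\delta>0$, I would fix compact intervals $I_k\subset(\mathcal{F}_k(\wt\gamma_k)-\delta,\mathcal{F}_k(\wt\gamma_k)+\delta)$, each with $\mathcal{F}_k(\wt\gamma_k)$ in its interior and small enough that the images $J_k:=\gamma_k(I_k)$ are pairwise disjoint compact subintervals of $(0,\Lambda')$ each containing $\wt\gamma_k$ in its interior. Hereafter $d=(d_1,\dots,d_m)$ ranges over $I:=I_1\times\dots\times I_m$, and $\A\e^d$, $\A^d$ denote the operators of Section~\ref{sec:2} for this data.

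\emph{Exactly $m$ eigenvalues below $\Lambda'$, uniformly in $d$.} Since $\Omega\subset\Omega'\times\R$ and $-\Delta^D_{\Omega'\times\R}$ has empty discrete spectrum, the min-max principle shows that $-\Delta^D_\Omega$ has no eigenvalue below $\Lambda'$; hence by \eqref{A:oplus}, $\sigma(\A^d)\cap[0,\Lambda')=\{\gamma_1(d_1),\dots,\gamma_m(d_m)\}\subset\bigcup_{k\in\M}J_k$, separated from $[\Lambda',\infty)$ by a gap that is uniform in $d\in I$. I would then show that for all small enough $\eps$ and all $d\in I$ the operator $\A\e^d$ has exactly $m$ eigenvalues below $\Lambda'$, all simple; denoting the $k$th of them by $\lambda_k^\eps(d)$, it lies in $J_k$ and $\sup_{d\in I}\sum_{k\in\M}|\lambda_k^\eps(d)-\gamma_k(d_k)|\to0$ as $\eps\to0$. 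The convergence and the preservation of (simple) multiplicities near each $\gamma_k(d_k)$ come from Theorems~\ref{th1} and \ref{thA1}, the point being that \eqref{th1:est} holds \emph{uniformly} in $d\in I$: with \eqref{dke} one has $\ga\ke=\gamma_k$ for $n\ge3$ and $\ga\ke=\gamma_k(1+o(1))$ for $n=2$ uniformly in $d_k\in I_k$, while all constants depend only on the fixed data $B_k,D_k,\rho_k,\wt\Omega$. The bound ``at most $m$'' is the delicate point, and is where the narrowing enters. Imposing Neumann conditions on the interfaces $S^\pm$ only lowers the eigenvalues (Neumann bracketing), and the resulting decoupled operator is the orthogonal sum of the Laplacians on $\Omega^\pm$ (Dirichlet on the lateral boundary, Neumann on $S^\pm$), each with spectrum $[\Lambda',\infty)$, and of the Laplacian on $\wt\Omega\e$ with Neumann conditions on $S^\pm\cup\bigcup_k S\ke$ and Dirichlet on $\partial\Omega''\times(-L,L)$. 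The latter operator converges, by the present theory applied on the bounded domain $\wt\Omega$ (with mixed outer conditions, cf.~Remark~\ref{rem:Neumann}), to $(-\Delta^{\mathrm{mix}}_{\wt\Omega})\oplus\Gamma(d)$, and $\lambda_1(-\Delta^{\mathrm{mix}}_{\wt\Omega})=\lambda_1(-\Delta^D_{\Omega''})=:\Lambda''>\Lambda'$ by strict Dirichlet domain monotonicity, since $\overline{\Omega''}\subset\Omega'$. Hence below $\Lambda''$ this limit operator has exactly the $m$ points $\gamma_k(d_k)$, followed by a gap reaching up to $\Lambda''$ that is uniform in $d\in I$; consequently, for small $\eps$, the middle piece has exactly $m$ eigenvalues below $(\Lambda'+\Lambda'')/2$, so the $(m{+}1)$st point of the spectrum of the decoupled operator is $\ge\Lambda'$, and thus $\lambda_{m+1}(\A\e^d)\ge\Lambda'$. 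Combined with ``at least $m$'' from the inside convergence of Theorem~\ref{thA1}, this gives exactly $m$.

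\emph{Continuity and monotonicity in $d$, and conclusion.} For fixed $\eps$ only the open part $D\ke=d\ke D_k+z_k$ of the fixed sphere $\partial B\ke$ varies with $d$, through the continuous maps $d_k\mapsto d\ke$ of \eqref{dke}; transplanting each window onto a reference window by a diffeomorphism of $\wt\Omega$ equal to the identity away from $z_k$ turns $\{\A\e^d\}_{d\in I}$ into a continuous family of operators on a fixed domain, so that $d\mapsto\lambda_k^\eps(d)$ is continuous on $I$. Moreover, if $d\le d'$ componentwise then enlarging each window shrinks each $S\ke$, so $\Omega\e^d\subseteq\Omega\e^{d'}$ and therefore $\dom(\a\e^{d'})=\H^1_{0,\partial\Omega}(\Omega\e^{d'})\subseteq\H^1_{0,\partial\Omega}(\Omega\e^{d})=\dom(\a\e^{d})$, the two forms agreeing on the common domain; the min-max principle then yields $\lambda_k^\eps(d)\le\lambda_k^\eps(d')$ for every $k$, so each $\lambda_k^\eps$ is non-decreasing in every $d_j$. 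Now set $\Phi^\eps=(\Phi_1^\eps,\dots,\Phi_m^\eps)\colon I\to\R^m$, $\Phi_k^\eps(d):=\lambda_k^\eps(d)-\wt\gamma_k$. By the previous step, on the face of $I$ where $d_k$ equals the left endpoint $a_k$ of $I_k$ one has $\Phi_k^\eps(d)=\gamma_k(a_k)-\wt\gamma_k+o(1)<0$ for small $\eps$, uniformly in the remaining coordinates, and likewise $\Phi_k^\eps(d)>0$ on the opposite face $d_k=b_k$ because $\gamma_k(b_k)>\wt\gamma_k$. With the continuity and monotonicity just established, $\Phi^\eps$ meets the hypotheses of the multi\-dimensional intermediate value theorem of \cite{HKP97}, so there is $\wt d\in I$ with $\Phi^\eps(\wt d)=0$, i.e.\ $\lambda_k^\eps(\wt d)=\wt\gamma_k$ for every $k\in\M$. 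Since by the previous step these $m$ numbers exhaust $\sigma_{\disc}(\A\e^{\wt d})$ and are simple, $\sigma_{\disc}(\A\e^{\wt d})=\bigcup_{k\in\M}\{\wt\gamma_k\}$ with simple eigenvalues, and $\wt d_k\in I_k\subset(\mathcal{F}_k(\wt\gamma_k)-\delta,\mathcal{F}_k(\wt\gamma_k)+\delta)$, as required.

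The main obstacle is the uniform (in $d$) upper bound of $m$ on the number of eigenvalues below $\Lambda'$, together with the exclusion of spurious eigenvalues accumulating at $\Lambda'$ from below --- a phenomenon that, as Subsection~\ref{subsec:4:1} demonstrates, genuinely occurs when the narrowing is absent; the Neumann bracketing argument exploiting $\Lambda''>\Lambda'$ is what resolves it. A lesser technical point is the continuity of $d\mapsto\lambda_k^\eps(d)$, which is needed to apply the intermediate value theorem.
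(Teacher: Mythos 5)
Your proposal is correct and follows essentially the same strategy as the paper: the multi--dimensional intermediate value theorem of \cite{HKP97}, monotonicity of the eigenvalues in the window parameters via form--domain inclusion and the min--max principle, Neumann bracketing at $S^\pm$ with the strict inequality $\Lambda''>\Lambda'$ to obtain the uniform ``at most $m$'' bound below $\Lambda'$, and the convergence Theorems~\ref{th1}--\ref{th2} for the localization of the $m$ eigenvalues. The only real difference is technical: you establish the face inequalities for the IVT by claiming that $\lambda_k^\eps(d)\to\gamma_k(d_k)$ uniformly in $d\in I$ (which requires checking that the constants and the term $\ga\ke-\ga_k$ in \eqref{th1:est} are uniform in $d$, a point that is delicate for $n=2$ because of the $d_k$-dependence hidden in the $\alpha(\eps)$ of \eqref{cap:rescaling1}), whereas the paper avoids any uniform-in-$d$ convergence estimate altogether by localizing the eigenvalues only at the two corners $d^\pm$ of $\mathcal{D}$ and then squeezing all intermediate $d$ between them via the very same monotonicity you invoke later; this yields exactly the corner inequalities \eqref{Hempel:ineq} needed for Lemma~\ref{lemma-hempel} and is slightly more economical. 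Your continuity argument via transplanting windows onto a reference window by diffeomorphisms is a reasonable substitute for the paper's citation of \cite[Theorem~A.1]{BK19}.
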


\begin{remark}\label{rem:almost}
The above theorem asserts that  $\wt d_k$
can be indicated ``almost explicitly'': they belong to the $\delta$-neighbourhoods of
the numbers $\mathcal{F}_k(\wt\gamma_k)$, where $\delta>0$ can be chosen arbitrary small.
Of course, the smaller $\delta$ is chosen, the smaller $\eps(\delta)$ will be.
\end{remark}

\begin{proof}[Proof of Theorem~\ref{th:exact}]
Let us fix $\delta>0$.
Let us also six sufficiently small $\eta>0$ satisfying
\begin{gather}\label{eta:smallness:1}
\forall k\in\M:\quad [\wt\gamma_k-2\eta,\wt\gamma_k+2\eta]\subset (0,\Lambda'),\\[2mm]
\label{eta:smallness:2}
\forall k,l\in\M,\ k\not=l:\quad
[\wt\gamma_k-2\eta,\wt\gamma_k+2\eta]\cap[\wt\gamma_l-2\eta,\wt\gamma_l+2\eta]=\emptyset,\\[2mm]
\label{ddist}
\max_{k\in\M}(\mathcal{F}_k(\wt\gamma_k+\eta)-\mathcal{F}_k(\wt\gamma_k-\eta))<\delta
\end{gather}
(the last property can be achieved since $\mathcal{F}_k$, $k\in\M$ are  continuous functions).
We denote
$$
 d_k^\pm\ceq \mathcal{F}_k(\wt\gamma_k\pm\eta).    
$$
Due to a strict 
monotonicity of $\mathcal{F}_k$, 
we have 
$0< d_k^-<d_k^+$. Finally, we introduce the set 
$$
\mathcal{D}\ceq \prod\limits_{k=1}^m [d^-_k, d^+_k]\subset\R^m.
$$

By virtue of Theorems~\ref{th1}--\ref{th2} one has for each $(d_1,\dots,d_m)\in\mathcal{D}$:
\begin{gather}\label{recall}
\begin{array}{lr}
\sigma(\A\e^{d_1,\dots,d_m})\to \sigma(\A^{d_1,\dots,d_m}),& \\[2mm]
\sigma_\disc(\A\e^{d_1,\dots,d_m})\nearrow \sigma_\disc(\A^{d_1,\dots,d_m})
&\text{with the multiplicity being preserved}
\\ 
&\text{(in the sense as in Theorem~\ref{th2}).}
\end{array}
\end{gather}
Here $\A^{d_1,\dots,d_m}$ is an operator in $\L(\Omega)\oplus\C^m$
given by
$$\A^{d_1,\dots,d_m}=(-\Delta_{\Omega}^D)\oplus\mathrm{diag}\{\gamma_1^{d_1},\dots,\gamma_m^{d_m}\},$$
with
$\gamma_k^{d_k}=\lim_{\eps\to 0}\capty(D\ke)|B\ke|^{-1}$. It follows from
\eqref{dke} and \eqref{cap:rescaling1}--\eqref{cap:rescaling2} that
$$
\gamma_k^{d_k}=\mathcal{F}^*_k(d_k),
$$
where  $\mathcal{F}^*_k:\R_+\to\R_+$ is a strictly increasing function defined by
$$
\mathcal{F}^*_k(s)=
\begin{cases}
\ds s^{n-2}{\capty(D_k)\over 4|B_k|},&n\ge 3,\\
\ds s {\pi\over 2|B_k|},&n=2.
\end{cases}
$$
Observe that 
$\mathcal{F}_k(\mathcal{F}^*_k(s))=s$ and $\mathcal{F}^*_k(\mathcal{F}_k(t))=t$;
we have
\begin{gather}\label{gagaga}
\begin{array}{c}
\ga_k^{d_k^-}=\wt\gamma_k-\eta< \wt\gamma_k< \wt\ga_k+\eta=\gamma_k^{d_k^+},\quad
\gamma_k^{d_k}\in [\wt\gamma_k-\eta,\wt\gamma_k+\eta]\text{ as }d_k\in [d_k^-,d_k^+].
\end{array}
\end{gather}

Let us fix some $\wh\Lambda\in(\gamma_m+2\eta,\Lambda')$.

\begin{lemma}\label{lm:spec:struct}
There exists $\eps'>0$ such that
for any $\eps\in(0,\eps']$ and $(d_1,\dots,d_m)\in\mathcal{D}$ the spectrum of 
$\A\e^{d_1,\dots,d_m}$ has the following structure within $ [0,\wh\Lambda]$:
\begin{gather}
\label{spec:d}    
\sigma(\A\e^{d_1,\dots,d_m})\cap [0,\wh\Lambda]=\cupl_{k\in\M}\{\gamma_{k,\eps}^{d_1,\dots,d_m}\},
\end{gather}
where the numbers $\gamma_{k,\eps}^{d_1,\dots,d_m}$ are simple eigenvalues satisfying 
\begin{gather}\label{3/2}
\wt\gamma_{k}-{3\eta\over 2}\leq \gamma_{k,\eps}^{d_1,\dots,d_m}\leq \wt\gamma_{k}+{3\eta\over 2}.
\end{gather}
Moreover, for each $k\in\M$ one has
\begin{gather}
\label{Hempel:ineq}
\gamma\ke^{d_1^+,d_2^+,\dots,d_{k-1}^+,d_k^-,d_{k+1}^+,\dots,d_m^+}
<\wt\gamma_k<
\gamma\ke^{d_1^-,d_2^-,\dots,d_{k-1}^-,d_k^+,d_{k+1}^-,\dots,d_m^-}.
\end{gather}
\end{lemma}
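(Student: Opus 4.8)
In this sketch write $\mathbf d=(d_1,\dots,d_m)$. The plan is to read off the whole statement from the quantitative estimate \eqref{th1:est} of Theorem~\ref{th1} and from the multiplicity-preservation part of Theorem~\ref{thA1}, applied to the family $\A\e^{\mathbf d}$ with $\mathbf d$ ranging over the compact box $\mathcal D$; the only genuinely new ingredient is the uniformity of all thresholds in $\mathbf d\in\mathcal D$. I would begin by describing $\sigma(\A^{\mathbf d})$ near $[0,\wh\Lambda]$: since $\wh\Lambda<\Lambda'$, since $\sigma(-\Delta_\Omega^D)=[\Lambda',\infty)$, and since $-\Delta_\Omega^D$ has no eigenvalue below $\Lambda'$ (both recalled before the theorem), relation \eqref{A:oplus} gives $\sigma(\A^{\mathbf d})\cap[0,\wh\Lambda+\varrho]=\{\ga_1^{d_1},\dots,\ga_m^{d_m}\}$ for any fixed $\varrho\in(0,\Lambda'-\wh\Lambda)$, where $\ga_k^{d_k}=\mathcal{F}^*_k(d_k)$; by \eqref{gagaga} together with \eqref{eta:smallness:1}--\eqref{eta:smallness:2} these are pairwise distinct \emph{simple} eigenvalues of $\A^{\mathbf d}$ lying in the mutually disjoint intervals $I_k\ceq[\wt\ga_k-\eta,\wt\ga_k+\eta]\subset(0,\Lambda')$.

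Next I would establish a convergence rate uniform in $\mathbf d$. With the scaling \eqref{dke} a direct computation from \eqref{cap:rescaling1}--\eqref{cap:rescaling2} gives $\capty(D\ke)/(4|B\ke|)=\ga_k^{d_k}$ if $n\ge3$ and $=\ga_k^{d_k}(1+\al(\eps))$ with $\al(\eps)\to0$ if $n=2$; since $\ga_k^{d_k}$ stays bounded for $d_k\in[d^-_k,d^+_k]$, the quantity $\sum_{k\in\M}|\ga\ke-\ga_k|$ entering \eqref{th1:est} (here $\ga_k=\ga_k^{d_k}$) is $\le\omega_0(\eps)$ for some $\omega_0(\eps)\to0$ independent of $\mathbf d\in\mathcal D$. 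Inspecting the proof of Theorem~\ref{th1} in Section~\ref{sec:4}, every constant there depends only on the fixed data $B_k,D_k,z_k,\rho_k,\tau_k,R_k$ and on the factor $C\e$ of \eqref{gamma:2}, which for the choice \eqref{dke} equals $d_k$ (resp.\ is comparable to it) and hence remains in a fixed compact range. Thus there is $\omega(\eps)\to0$, independent of $\mathbf d\in\mathcal D$, with $\widetilde d_H\big(\sigma(\A\e^{\mathbf d}),\sigma(\A^{\mathbf d})\big)\le\omega(\eps)$, and, by Theorem~\ref{thA3} and the same analysis, the resolvent rate $\delta\e$ in \eqref{ResRes:1} can be taken $\le\omega(\eps)$, again uniformly in $\mathbf d\in\mathcal D$.

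The transfer to $[0,\wh\Lambda]$ comes next. Since $\lambda\mapsto(1+\lambda)^{-1}$ is bi-Lipschitz on every bounded interval and the essential spectrum $[\Lambda',\infty)$ of both $\A\e^{\mathbf d}$ and $\A^{\mathbf d}$ stays the fixed distance $\Lambda'-\wh\Lambda$ away from $[0,\wh\Lambda]$, the bound $\widetilde d_H\le\omega(\eps)$ forces $\mathrm{dist}\big(\mu,\{\ga_1^{d_1},\dots,\ga_m^{d_m}\}\big)\le C_1\omega(\eps)$ for every $\mu\in\sigma(\A\e^{\mathbf d})\cap[0,\wh\Lambda]$. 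Fixing $\eps'>0$ (uniform in $\mathcal D$) with $C_1\omega(\eps')\le\eta/2$ we get $\sigma(\A\e^{\mathbf d})\cap[0,\wh\Lambda]\subset\bigcup_{k\in\M}(\ga_k^{d_k}-\eta/2,\ga_k^{d_k}+\eta/2)\subset(0,\Lambda')$, the $m$ neighbourhoods being mutually disjoint, so this part of the spectrum consists of isolated eigenvalues of finite multiplicity. For the count I apply the multiplicity-preservation part of Theorem~\ref{thA1} to $\A^{\mathbf d}$ at each simple eigenvalue $\ga_k^{d_k}$ with $L=\eta/2$ (admissible since $[\ga_k^{d_k}-\eta/2,\ga_k^{d_k}+\eta/2]\cap\sigma(\A^{\mathbf d})=\{\ga_k^{d_k}\}$): as its threshold depends only on $L$ and on the resolvent rate $4\delta\e$, both uniform over $\mathcal D$, after shrinking $\eps'$ we conclude that for $\eps\le\eps'$ and all $\mathbf d\in\mathcal D$ the operator $\A\e^{\mathbf d}$ has exactly one, simple, eigenvalue in $[\ga_k^{d_k}-\eta/2,\ga_k^{d_k}+\eta/2]$. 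Together with the localisation above this is precisely \eqref{spec:d}, with $m$ simple eigenvalues $\ga\ke^{\mathbf d}$ obeying $|\ga\ke^{\mathbf d}-\ga_k^{d_k}|\le C_1\omega(\eps)\le\eta/2$; since $\ga_k^{d_k}\in I_k$ this yields $|\ga\ke^{\mathbf d}-\wt\ga_k|\le\eta+\eta/2=3\eta/2$, which is \eqref{3/2}.

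Finally, \eqref{Hempel:ineq} follows at once from the last bound, applied at the $2m$ configurations of $\mathcal D$ in which the $k$-th entry is $d^\mp_k$ and all the others are $d^\pm_l$: for such a configuration \eqref{gagaga} gives $\ga_k^{d^\mp_k}=\wt\ga_k\mp\eta$, hence the corresponding eigenvalue of $\A\e$ is $\le\wt\ga_k-\eta+C_1\omega(\eps)\le\wt\ga_k-\eta/2<\wt\ga_k$ in the first case and $\ge\wt\ga_k+\eta-C_1\omega(\eps)\ge\wt\ga_k+\eta/2>\wt\ga_k$ in the second, valid for $\eps\le\eps'$. The main obstacle in this argument is precisely the uniformity over the box $\mathcal D$: both the smallness of the rate $\omega(\eps)$ in \eqref{th1:est} (one must verify that nothing in Section~\ref{sec:4} degenerates as $\mathbf d$ moves in $\mathcal D$, the sole $\mathbf d$-dependent quantity being the bounded factor $C\e$) and the uniformity of the smallness threshold in the multiplicity-preservation statement of Theorem~\ref{thA1}; once these are secured, the remainder is bookkeeping with the fixed separation $\eta$ supplied by \eqref{eta:smallness:1}--\eqref{eta:smallness:2}.
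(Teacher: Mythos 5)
Your route is genuinely different from the paper's, and the obstacle you flag at the end is exactly the one the paper's argument is designed to avoid. You try to make the spectral-convergence rates of Theorem~\ref{th1} and the threshold $\eps$ in the multiplicity-preservation part of Theorem~\ref{thA1} \emph{uniform over the whole box $\mathcal{D}$}, which requires re-inspecting all the constants in Section~\ref{sec:4} and, more seriously, the internal structure of the abstract Theorem~\ref{thA1} from \cite{P12}. You assert this uniformity (``inspecting the proof\dots every constant depends only on the fixed data'', ``its threshold depends only on $L$ and on the resolvent rate'') rather than prove it; while it is plausibly true for this particular scaling \eqref{dke} (indeed $C_\varepsilon=d_k\in[d_k^-,d_k^+]$ is bounded above and below), verifying it amounts to a non-trivial constant-chase that your proposal only gestures at.

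The paper avoids the issue entirely. It applies the convergence Theorems~\ref{th1}--\ref{th2} at only \emph{two} configurations -- the corners $(d_1^-,\dots,d_m^-)$ and $(d_1^+,\dots,d_m^+)$ of $\mathcal{D}$ -- to obtain $\eps_-,\eps_+>0$ below which the spectrum in $[0,\wh\Lambda]$ consists of $m$ simple eigenvalues trapped in the bands $[\wt\gamma_k-\tfrac{3\eta}{2},\wt\gamma_k-\tfrac{\eta}{2}]$ and $[\wt\gamma_k+\tfrac{\eta}{2},\wt\gamma_k+\tfrac{3\eta}{2}]$, respectively. Then, for arbitrary $(d_1,\dots,d_m)\in\mathcal{D}$, the crucial ingredient is the form-domain monotonicity \eqref{a:monoton}: enlarging $d_k$ enlarges the window $D_{k,\eps}$, shrinks $S_{k,\eps}$, hence \emph{shrinks} $\dom(\a_\eps^{d_1,\dots,d_m})$, while the form values coincide on the common domain. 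By the min-max principle, every eigenvalue of $\A_\eps^{d_1,\dots,d_m}$ is sandwiched between the corresponding eigenvalues at the two corners, which immediately yields \eqref{spec:d}--\eqref{3/2} for all of $\mathcal{D}$ once $\eps\le\min\{\eps_-,\eps_+\}$, with no uniform-in-$\mathbf d$ convergence required. The separation inequalities \eqref{Hempel:ineq} then follow from the pointwise limits \eqref{Hempel:lim} at the $2m$ relevant mixed-corner configurations, again finitely many applications of Theorem~\ref{th1}. Note that the same monotonicity \eqref{a:monoton} is precisely what feeds into the Hempel intermediate-value Lemma~\ref{lemma-hempel} at the end of the section, so the paper's choice is also structurally the natural one. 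Your proposal would work if the uniformity claims were substantiated, but as written it has a real gap there; the monotonicity trick is both shorter and more robust.
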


\begin{proof}
Due to \eqref{eta:smallness:1}, \eqref{eta:smallness:2}, \eqref{gagaga},
the intervals $[\gamma_k^{d_k^-}-{\eta\over 2}, \gamma_{k}^{d_k^-}+{\eta\over 2}]$, $k\in\M$ are pairwise disjoint and belong to $(0,\wh\Lambda)$. 
Hence, by virtue of
\eqref{ess:ess:1} and 
\eqref{recall}, for sufficiently small $\eps$ the spectrum of  $\A^{d_1^-,\dots,d_m^-}$
withing the interval $[0,\wh\Lambda]$ consists of $m$ simple eigenvalues
so that in each interval $[\gamma_k^{d_k^-}-{\eta\over 2}, \gamma_{k}^{d_k^-}+{\eta\over 2}]$ one has precisely one eigenvalue:
there exist $\eps_->0$ such that   
\begin{gather}\label{spec-}
\sigma(\A\e^{d_1^-,\dots,d_m^-})\cap[0,\wh\Lambda]=\cupl_{k\in\M}\{\gamma_{k,\eps}^-\}
\text{ for } \eps\in (0,\eps_-],
\end{gather}
where the numbers $\gamma_{k,\eps}^-$  are simple eigenvalues satisfying
\begin{gather}
\gamma_{k,\eps}^-\in 
[\gamma_k^{d_k^-}-{\eta\over 2}, \gamma_{k}^{d_k^-}+{\eta\over 2}]=
[\wt\gamma_k-{3\eta\over 2}, \wt\gamma_{k}-{\eta\over 2}].
\end{gather}
Similarly, there exist $\eps_+>0$ such that   
\begin{gather}\label{spec+}
\sigma(\A\e^{d_1^+,\dots,d_m^+})\cap [0,\wh\Lambda]=\cupl_{k\in\M}\{\gamma_{k,\eps}^+\}
\text{ for } \eps\in (0,\eps_+],
\end{gather}
where the numbers $\gamma_{k,\eps}^+$  are simple eigenvalues satisfying
\begin{gather}
\gamma_{k,\eps}^+\in [ \gamma_k^{d_k^+}-{\eta\over 2}, \gamma_{k}^{d_k^+}+{\eta\over 2}]=
[\wt\gamma_k+{\eta\over 2}, \wt\gamma_{k}+{3\eta\over 2}].
\end{gather}
Let $\a\e^{d_1,\dots,d_m}$ be the sesquilinear form associated 
with  $\A\e^{d_1,\dots,d_m}$.
It is easy to see that for any
$(d^1_1,\dots,d^1_m)\in\mathcal{D}$,
$(d^2_1,\dots,d^2_m)\in\mathcal{D}$ satisfying $d_k^1\leq d_k^2$, one has
\begin{gather}\label{a:monoton}
\begin{array}{c}
\dom(\a\e^{d^1_1,\dots,d^1_m})\supset \dom(\a\e^{d_1^2,\dots,d_m^2}),\\[2mm]
\forall u\in\dom(\a\e^{d^2_1,\dots,d^2_m}): \ \a\e^{d_1^1,\dots,d_m^1}[u,u]=
\a\e^{d_1^2,\dots,d_m^2}[u,u].
\end{array}
\end{gather}
Then, using the min-max principle, we conclude  from \eqref{spec-}--\eqref{a:monoton} that  
\eqref{spec:d}--\eqref{3/2} hold for $\eps\le \min\{\eps_-,\eps_+\}$; moreover,
by Theorem~\ref{th1},
$
\gamma_{k,\eps}^{d_1,\dots,d_m}\to \gamma_{k}^{d_k}\text{ as }\eps\to 0$.
In particular, one has
\begin{gather}\label{Hempel:lim}
\lim_{\eps\to 0}\gamma\ke^{d_1^+,d_2^+,\dots,d_{k-1}^+,d_k^-,d_{k+1}^+,\dots,d_m^+}=\gamma_k^{d_k^-},
\quad
\lim_{\eps\to 0}\gamma\ke^{d_1^-,d_2^-,\dots,d_{k-1}^-,d_k^+,d_{k+1}^-,\dots,d_m^-}=\gamma_k^{d_k^+}.
\end{gather}
Due to \eqref{gagaga} and \eqref{Hempel:lim} there exists $\eps'\le \min\{\eps_-,\eps_+\}$
such that 
\eqref{Hempel:ineq} holds
for $\eps\in (0,\eps']$.
The lemma is proven.
\end{proof}

\begin{lemma}\label{lm:no:spec}
There exists $\wh\eps\in (0,\eps']$ such that $(d_1,\dots,d_m)\in\mathcal{D}$ and 
for any $\eps\in(0,\wh\eps]$    one has 
\begin{gather}\label{nospectrum}
\sigma(\A\e^{d_1,\dots,d_m})\cap (\wh\Lambda,\Lambda')=\emptyset.
\end{gather}
\end{lemma}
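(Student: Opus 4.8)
The plan is to reduce \eqref{nospectrum} to the single lower bound
$$
\lambda_{m+1}\big(\A\e^{d_1,\dots,d_m}\big)\ \geq\ \Lambda'\qquad\text{for all }(d_1,\dots,d_m)\in\mathcal{D}\text{ and all small }\eps,
$$
where $\lambda_{m+1}(\cdot)$ is the $(m+1)$-th min-max quantity of the form $\a\e^{d_1,\dots,d_m}$. Granting this, \eqref{ess:ess:1} gives $\sigma_\ess(\A\e^{d_1,\dots,d_m})=[\Lambda',\infty)$, so $\A\e^{d_1,\dots,d_m}$ has at most $m$ eigenvalues (with multiplicities) in $[0,\Lambda')$; by Lemma~\ref{lm:spec:struct} (formula \eqref{spec:d}) together with \eqref{3/2} these are exactly the $m$ simple eigenvalues $\gamma\ke^{d_1,\dots,d_m}$, each of which lies strictly below $\wh\Lambda$, so nothing is left in $(\wh\Lambda,\Lambda')$. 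From now on fix $(d_1,\dots,d_m)\in\mathcal{D}$ and abbreviate $\A\e\ceq\A\e^{d_1,\dots,d_m}$, $\a\e\ceq\a\e^{d_1,\dots,d_m}$; all smallness conditions on $\eps$ imposed below will depend only on the fixed data ($\Omega'$, $\Omega''$, $L$, $B_k$, $\rho_k$, $z_k$), hence will hold uniformly over $\mathcal{D}$, and the resulting threshold is the desired $\wh\eps$.

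To bound $\lambda_{m+1}(\A\e)$ from below I would apply the min-max principle with the $m$-dimensional test space $F\ceq\mathrm{span}\{\mathbf 1_{B\ke}:k\in\M\}\subset\L(\Omega\e)$, which gives $\lambda_{m+1}(\A\e)\geq\inf\{\a\e[u,u]/\|u\|_{\L(\Omega\e)}^2:u\in\dom(\a\e),\ \int_{B\ke}u=0\ \forall k\in\M\}$. Fix such a $u$ and decompose $\Omega\e$, up to null sets, into $\Omega^+$, $\Omega^-$, the resonators $B\ke$ ($k\in\M$) and $\wt\Omega\e^{\rm out}\ceq\wt\Omega\setminus\overline{\cupl_{k\in\M}B\ke}$; then $\a\e[u,u]$ and $\|u\|^2_{\L(\Omega\e)}$ split accordingly. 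On $\Omega^\pm=\Omega'\times(\pm L,\pm\infty)$ the function $u$ vanishes on $\partial\Omega'\times(\pm L,\pm\infty)\subset\partial\Omega$, so slicing in $x^n$ and using $\Lambda'=\lambda_1(-\Delta_{\Omega'}^D)$ yields $\|\nabla u\|^2_{\L(\Omega^\pm)}\geq\Lambda'\|u\|^2_{\L(\Omega^\pm)}$. On each $B\ke\cong\eps B_k$ the Poincar\'e inequality and $\int_{B\ke}u=0$ give $\|\nabla u\|^2_{\L(B\ke)}\geq\lambda_2(-\Delta^N_{B_k})\eps^{-2}\|u\|^2_{\L(B\ke)}$, which for small $\eps$ (uniformly in $k$) exceeds $\Lambda'\|u\|^2_{\L(B\ke)}$.

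The decisive term is the one on $\wt\Omega\e^{\rm out}$, and here the narrowing enters through $\Lambda''\ceq\lambda_1(-\Delta^D_{\Omega''})>\Lambda'$, the strict inequality holding because $\overline{\Omega''}\subset\Omega'$. On $\wt\Omega\e^{\rm out}\subset\Omega''\times(-L,L)$ the function $u$ vanishes on $\partial\Omega''\times(-L,L)\subset\partial\Omega$. Since $B\ke\subset\B(\tau_k\eps,z_k)$, for $x^n$ outside the set $I\e\ceq\cupl_{k\in\M}(z_k^n-\tau_k\eps,z_k^n+\tau_k\eps)$ the whole cross-section $\{x^n\}\times\Omega''$ lies in $\wt\Omega\e^{\rm out}$ and $u(\cdot,x^n)\in\H^1_0(\Omega'')$. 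Discarding the nonnegative contribution of the remaining (exceptional) slices,
$$
\|\nabla u\|^2_{\L(\wt\Omega\e^{\rm out})}\ \geq\ \int_{(-L,L)\setminus I\e}\!\int_{\Omega''}|\nabla u|^2\,\d x'\,\d x^n\ \geq\ \Lambda''\Big(\|u\|^2_{\L(\wt\Omega\e^{\rm out})}-\|u\|^2_{\L(Q\e)}\Big),\qquad Q\e\ceq\wt\Omega\e^{\rm out}\cap(I\e\times\Omega''),
$$
where $Q\e$ is a union of $m$ slabs of thickness $O(\eps)$. A one-dimensional trace/Poincar\'e estimate in $x^n$ (in the spirit of Lemma~\ref{lm:Best}; near each $z_k$ one uses the ``hole'' version of Lemma~\ref{lm:Best} for functions defined only on $\B(R_k,z_k)\setminus\overline{B\ke}$, proved by the same argument) gives $\|u\|^2_{\L(Q\e)}\leq C\eps\big(\|u\|^2_{\L(\wt\Omega\e^{\rm out})}+\|\nabla u\|^2_{\L(\wt\Omega\e^{\rm out})}\big)$. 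Substituting and moving the gradient term to the left yields $\|\nabla u\|^2_{\L(\wt\Omega\e^{\rm out})}\geq\Lambda''\frac{1-C\eps}{1+C\Lambda''\eps}\|u\|^2_{\L(\wt\Omega\e^{\rm out})}\geq\Lambda'\|u\|^2_{\L(\wt\Omega\e^{\rm out})}$ for small $\eps$, since $\Lambda''>\Lambda'$. Adding the four regional estimates gives $\a\e[u,u]\geq\Lambda'\|u\|^2_{\L(\Omega\e)}$, hence $\lambda_{m+1}(\A\e)\geq\Lambda'$, and \eqref{nospectrum} follows.

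I expect the estimate on $\wt\Omega\e^{\rm out}$ to be the main obstacle. The point is that one should \emph{not} try to bound the first Dirichlet eigenvalue of a cross-section that is pierced by a resonator — already for $n=2$ such a slice becomes disconnected and its first eigenvalue may drop below $\Lambda'$ — but instead keep only the untouched slices, whose first Dirichlet eigenvalue equals exactly $\Lambda''>\Lambda'$, and absorb the $\L^2$-mass of $u$ on the $O(\eps)$-thin exceptional slabs; the circular appearance of $\|\nabla u\|^2$ in the absorption is harmless because its coefficient is $O(\eps)$. This is exactly where the local narrowing ($\Lambda''>\Lambda'$) is indispensable: without the strict gap, the loss from the exceptional slabs could not be compensated, in agreement with the phenomenon illustrated in Subsection~\ref{subsec:4:1}, where, in the absence of such a mechanism, discrete eigenvalues do accumulate at the bottom of the essential spectrum.
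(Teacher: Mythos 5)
Your proof is correct, and it takes a genuinely different route from the paper's. The paper uses Dirichlet--Neumann bracketing: it decouples $\L(\Omega\e)=\L(\Omega^-)\oplus\L(\wt\Omega\e)\oplus\L(\Omega^+)$ by imposing Neumann conditions on $S^\pm$, which yields an operator $\wh\A\e\le\A\e$ in the form sense; it then applies the already-established abstract convergence results (Theorems~\ref{th1}--\ref{th2}, invoked a second time for the bounded-domain Laplacian $\wt\A\e$, whose cross-section threshold is $\Lambda''>\Lambda'$) to conclude that $\wh\A\e$ has exactly $m$ eigenvalues below $\Lambda'$, and hence $\A\e$ has at most $m$. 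You instead prove the bound ``at most $m$ eigenvalues in $[0,\Lambda')$'' directly, by estimating the $(m+1)$-th min--max quantity from below: the explicit $m$-dimensional test space $\mathrm{span}\{\mathbf 1_{B\ke}\}$ forces $\int_{B\ke}u=0$, and a region-by-region decomposition (Dirichlet bound on $\Omega^\pm$, Poincar\'e on each $B\ke$, and a slicing argument on $\wt\Omega\e^{\rm out}$ that isolates the untouched cross-sections carrying the strict gap $\Lambda''>\Lambda'$) gives $\a\e[u,u]\geq\Lambda'\|u\|^2$. The trade-off is clear: the paper's route reuses the abstract machinery and is shorter, while yours is self-contained, elementary, makes the role of the narrowing transparent, and even shows the lower bound $\lambda_{m+1}(\A\e)\ge\Lambda'$ uniformly in $(d_1,\dots,d_m)$ (not only over $\mathcal D$), since none of your estimates see the window sizes. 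The one step that deserves a cleaner justification than ``in the spirit of Lemma~\ref{lm:Best}'' is the thin-slab estimate $\|u\|^2_{\L(Q\e)}\leq C\eps\bigl(\|u\|^2_{\L(\wt\Omega\e^{\rm out})}+\|\nabla u\|^2_{\L(\wt\Omega\e^{\rm out})}\bigr)$: Lemma~\ref{lm:Best} yields a factor $\eta\e\sim\eps^2$ on a set small in \emph{all} directions, which is not what you need for a slab thin in only one direction. The natural route is to first extend $u$ across each resonator to $\wt u\in\H^1(\wt\Omega)$ using the operator $P\ke$ of Corollary~\ref{coro:ext} (whose norm is $\eps$-uniform) and then apply, slice by slice in $x'$, the one-dimensional estimate $\int_{a-\delta}^{a+\delta}|v|^2\d t\leq C\delta\|v\|^2_{\H^1(-L,L)}$; this gives the required factor $\eps$ without any ``hole version'' of Lemma~\ref{lm:Best}. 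With that detail filled in, your argument is a valid and somewhat more elementary alternative to the paper's bracketing proof.
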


\begin{proof}
For the proof we use bracketing technique.
We represent $\L(\Omega\e)$ 
as a direct sum
$$\L(\Omega\e)=\L(\Omega^-)\oplus \L(\wt\Omega\e)\oplus\L(\Omega^+).$$
With respect to this space decomposition we introduce the operator
$$\wh\A\e^{d_1,\dots,d_m}=\A^-\oplus\wt\A\e^{d_1,\dots,d_m}\oplus\A^+,$$ 
where
\begin{itemize}

\item the operator $\wt\A\e^{d_1,\dots,d_m}$ is associated with the sesquilinear form  
$\wt\a\e  $ given by\\
$\ds\wt\a\e [u,v]=\int_{\wt\Omega\e}\nabla u\cdot\overline{\nabla v}\d x,\ \dom(\wt\a\e )=\{u\in \H^1(\wt\Omega\e):\ u=0\text{ on }\partial\Omega''\times(-L,L)\}.$\\
In other words,   $\wt\A\e^{d_1,\dots,d_m}$ is the Laplacian on $\wt\Omega\e$
subject to the Dirichlet boundary conditions on $\partial\Omega''\times (-L,L)$ and the Neumann boundary conditions on $S^\pm$.

\item the operator $\A^-$ (respectively, $\A^+$) in the Laplace operator on $\Omega^-$ (respectively, on $\Omega^+$) subject to the
Neumann boundary conditions on 
$S^-$  (respectively, on $S^+$), and the Dirichlet boundary conditions on 
$(\partial\Omega^-)\setminus S^-$  (respectively, on $(\partial\Omega^+)\setminus S^+$).
\end{itemize}
We denote by $\wt\A$ the Laplacian on $\wt\Omega$
subject to the
Neumann boundary conditions on 
$S^\pm$, and the Dirichlet boundary conditions on 
$\partial\Omega''\times (-L,L)$.
It is easy to see that
\begin{gather}\label{inf''}
\inf\sigma(\wt\A)=\Lambda''\ceq\lambda_1(-\Delta_{\Omega''}^D).
\end{gather}
Also, since  $\overline{\Omega''}\subset\Omega'$, we get  (see, e.g, \cite[Subsection~1.3.2]{He06})
\begin{gather}\label{LaLa}
\Lambda'<\Lambda''.
\end{gather}
By virtue of Theorems~\ref{th1}--\ref{th2} (taking into account Remark~\ref{rem:Neumann}) we have
\begin{gather}\label{recall:wt}
\begin{array}{lr}
\sigma(\wt\A\e^{d_1,\dots,d_m})\to \sigma(\wt\A^{d_1,\dots,d_m}),&
\\[2mm]
\sigma_\disc(\wt\A\e^{d_1,\dots,d_m})\nearrow \sigma_\disc(\wt\A^{d_1,\dots,d_m}) 
&\text{with the multiplicity being preserved}  
\\
&\text{(in the sense as in Theorem~\ref{th2})},
\end{array}
\end{gather}
 where $\wt\A^{d_1,\dots,d_m}=\wt\A\oplus\mathrm{diag}\{\gamma_1^{d_1},\dots,\gamma_m^{d_m}\}.$
Combining \eqref{inf''}--\eqref{recall:wt}  and using the same arguments as in the proof of \eqref{spec:d},
we conclude there exists  $\wt\eps>0$  such that
$ \sigma(\wt\A\e^{d_1,\dots,d_m})\cap[0,\Lambda')$
consists of precisely $m$ simple eigenvalues
provided $\eps\in (0,\wt\eps]$ and $(d_1,\dots,d_m)\in\mathcal{D}$.
Hence, since $\sigma(\A^\pm)=[\Lambda',\infty)$, we infer
\begin{gather}\label{nospectrum:wh}
\sigma(\wh\A\e^{d_1,\dots,d_m})\cap[0,\Lambda')\text{ consists of }m\text{ simple eigenvalues}
\end{gather}
provided $\eps\in(0,\wt\eps]$ and $(d_1,\dots,d_m)\in\mathcal{D}$.
Finally, we observe that $\wh\A\e^{d_1,\dots,d_m}\leq \A\e^{d_1,\dots,d_m}$ in the form sense. Using this fact and the min-max principle, we conclude from \eqref{nospectrum:wh} that
\begin{multline*}
\sigma(\A\e^{d_1,\dots,d_m})\cap[0,\Lambda')\text{ consists of \emph{at most} }m\text{ eigenvalues} \\\text{(with multiplicities taken into account)}
\end{multline*}
for $\eps\in(0,\wt\eps]$, $(d_1,\dots,d_m)\subset\mathcal{D}$.
On the other hand (see \eqref{spec:d}) $\sigma(\A\e^{d_1,\dots,d_m})\cap[0,\wt\Lambda]$ consists of 
precisely $m$ simple eigenvalues for $\eps\in(0,\eps']$.
Hence \eqref{nospectrum} holds with $\wh\eps\ceq\min\{\eps',\wt\eps\}$. 
\end{proof}

We need the following multi-dimensional version
of the intermediate value theorem. 

\begin{lemma}[{\cite[Lemma~3.5]{HKP97}}]\label{lemma-hempel}
Let $\mathcal{D}=\Pi_{k=1}^n[a_k, b_k]$ with $a_k < b_k$, $k=1,\dots,n$, assume that 
$f:\mathcal{D}\to\R^n$ is continuous and each coordinate function $f_k$ of $f$ is
monotonically increasing in each of its arguments. If
$F_k^-<F_k^+$, $k=1,\dots,n$, where
\begin{gather*}
F_k^-\ceq f_k(b_1,b_2,\dots,b_{k-1},a_k,b_{k+1},\dots,b_n),\\
F_k^+\ceq f_k(a_1,a_2,\dots,a_{k-1},b_k,a_{k+1},\dots,b_n),
\end{gather*}
then for any $F\in\Pi_{k=1}^n[F_k^-,F_k^+]$
there exists  $x\in\mathcal{D}$ such that $f(x)=F$.
\end{lemma}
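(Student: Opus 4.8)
The plan is to derive the statement from the Poincar\'e--Miranda theorem (the classical ``sign change on opposite faces forces a zero'' theorem, equivalent to Brouwer's fixed point theorem). Fix $F=(F_1,\dots,F_n)\in\prod_{k=1}^n[F_k^-,F_k^+]$ and set $g\ceq f-F$, i.e.\ $g_k(x)=f_k(x)-F_k$. Subtracting a constant destroys neither continuity nor monotonicity, so $g$ is continuous and each $g_k$ is still monotonically increasing in each of its arguments, and finding $x\in\mathcal D$ with $f(x)=F$ is the same as finding a zero of $g$. The only step with real content is to convert the two-point inequalities $F_k^-\le F_k\le F_k^+$ into honest sign conditions on the pair of opposite faces $\{x_k=a_k\}$, $\{x_k=b_k\}$ of the box $\mathcal D$; this is exactly where the monotonicity of $f$ enters.

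For the propagation step I would argue as follows. Let $x$ lie on the face $\{x_k=a_k\}$; then $x_j\le b_j$ for every $j\ne k$, so monotonicity of $g_k$ in the variables $x_j$, $j\ne k$, gives
\[
g_k(x)\le g_k(b_1,\dots,b_{k-1},a_k,b_{k+1},\dots,b_n)=F_k^- - F_k\le 0 .
\]
Symmetrically, on the face $\{x_k=b_k\}$ one has $g_k(x)\ge F_k^+ - F_k\ge 0$. Performing the coordinatewise affine change of variables $y_k=(x_k-a_k)/(b_k-a_k)$, which carries $\mathcal D$ onto the unit cube $[0,1]^n$, turns $g$ into a continuous map $[0,1]^n\to\R^n$ (still denoted $g$) with $g_k\le 0$ on $\{y_k=0\}$ and $g_k\ge 0$ on $\{y_k=1\}$ for every $k$. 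The Poincar\'e--Miranda theorem then yields a zero $y^*\in[0,1]^n$ of $g$, and translating back through the affine change produces $x^*\in\mathcal D$ with $f(x^*)=F$, which is the desired conclusion.

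If a self-contained argument is preferred, the required Poincar\'e--Miranda statement follows from Brouwer's theorem: define $h\colon[0,1]^n\to[0,1]^n$ by $h_k(y)\ceq\mathrm{med}\{0,\,y_k-g_k(y),\,1\}$, which is continuous, hence has a fixed point $y^*$. If $0<y_k^*<1$ then neither clamp can be active, forcing $g_k(y^*)=0$; if $y_k^*=0$ the fixed-point identity forces $g_k(y^*)\ge 0$ while the face condition forces $g_k(y^*)\le 0$, and symmetrically for $y_k^*=1$, so again $g_k(y^*)=0$; hence $g(y^*)=0$. The main obstacle in the whole proof is thus just the appeal to Brouwer's fixed point theorem: the monotonicity-to-faces step and the case analysis are elementary bookkeeping, and the non-strictness of the bounds $F_k^-\le F_k\le F_k^+$ causes no trouble since the argument uses only non-strict sign conditions on the faces.
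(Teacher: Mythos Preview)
Your argument is correct. The reduction to Poincar\'e--Miranda via $g=f-F$ is the natural route: monotonicity of $f_k$ in the off-diagonal variables is precisely what turns the two endpoint values $F_k^-,F_k^+$ into uniform sign conditions on the full opposite faces $\{x_k=a_k\}$ and $\{x_k=b_k\}$, and your face computations are clean. The self-contained derivation of Poincar\'e--Miranda from Brouwer via the clamped map $h_k(y)=\mathrm{med}\{0,y_k-g_k(y),1\}$ is also fine, including the boundary cases.

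As for comparison: the paper does not give its own proof of this lemma. It simply quotes it as \cite[Lemma~3.5]{HKP97} and uses it as a black box in the proof of Theorem~\ref{th:exact}. So there is nothing in the paper to compare against; your proposal in fact supplies what the paper omits. For what it is worth, the argument in \cite{HKP97} likewise rests on a topological fixed-point/degree principle, so your approach is in the same spirit as the original source.
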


Now, we are ready to finish the proof of Theorem~\ref{th:exact}.
Let us \textit{fix} $\eps\in (0,\wh\eps]$; with this $\eps$ the properties \eqref{spec:d}--\eqref{Hempel:ineq}, \eqref{nospectrum} hold true.
We introduce  $f=(f_1,\dots,f_m):\mathcal{D}\to\R^m$ 
by
$$
f_k(d_1,\dots,d_m)= \gamma\ke^{d_1,\dots,d_m}.
$$
By the min-max principle, each $f_k$   is
monotonically increasing in each of its arguments. 
Moreover, each $f_k$ depends continuously on $(d_1,\dots,d_m)\in\mathcal{D}$
(for $n=2$ the proof can be found in \cite[Theorem~A.1]{BK19}, in the general case the proof is similar).
Finally, due to \eqref{Hempel:ineq}, one has for each $k\in\M$:
\begin{gather*}
f_k( d_1^+,\dots, d_{k-1}^+,  d_k^-,  d_{k+1}^+,\dots,\dots, d_m^+)<
\wt\ga_k<
f_k( d_1^-,\dots, d_{k-1}^-, d_k^+,  d_{k+1}^-,\dots,\dots,d_m^-)
\end{gather*}
Then, by Lemma~\ref{lemma-hempel}, there exists $(\wt d_1,\dots,\wt d_m)\in\mathcal{D}$ such
that $f_k(\wt d_1,\dots,\wt d_m)=\wt\gamma_k$, $\forall k\in\M$. Thus, taking into account \eqref{ess:ess:1} and \eqref{nospectrum}, we arrive at
$$\sigma_{\disc}( \A\e^{\wt d_1,\dots,\wt d_m})=\cup_{k\in\M}\{\wt\ga_k\}.$$
Evidently, all eigenvalues $\wt\ga_k$ are simple.
Finally, using \eqref{ddist} and taking into account that $\mathcal{F}_k(\wt\ga_k)\in   [\mathcal{F}_k(\wt\gamma_k-\eta),\mathcal{F}_k(\wt\gamma_k+\eta)]=[d_k^-,d_k^+]$, 
we get 
\begin{gather*}
|\wt d_k - \mathcal{F}_k(\wt\ga_k)|\leq d_k^+-d_k^-=\mathcal{F}_k(\wt\ga_k+\eta)-\mathcal{F}_k(\wt\ga_k-\eta)<\delta
\end{gather*}
Theorem~\ref{th:exact} is proven.
\end{proof}

\appendix

\section{\label{appendix:A}Convergence of operators in varying Hilbert spaces}

In this {appendix} we present an abstract scheme for studying the convergence of operators
in varying Hilbert spaces proposed by Post in \cite{P06}, and further elaborated in the monograph \cite{P12}. 
Initially, this scheme was applied to study convergence of the Laplace-Beltrami operator on ``fat'' graphs; 
later it also has shown to be effective to investigate resolvent and spectral convergence 
in domains with holes \cite{KPl21,KP18,AP21}.

{
As in Section~\ref{sec:3},
let $(\HS\e)_{\eps>0}$ be a family of Hilbert spaces, 
$(\A\e)_{\eps>0}$ be a family of non-negative, self-adjoint, unbounded operators in $\HS\e$, $(\a\e)_{\eps>0}$ be a family of associated 
sesquilinear forms. Similarly, let $\HS$ be a Hilbert space,  $\A$ be a non-negative, self-adjoint, unbounded operator in $\HS$, $\a$ be the associated 
sesquilinear form.  
We define the Hilbert spaces $\HS^1\e$, $\HS^1$ and $\HS^2$  
by \eqref{scale:1}--\eqref{scale:2}.}

The definition below generalizes the standard notion of norm resolvent convergence 
to the setting of varying Hilbert spaces.

\begin{definition}[{\cite{P12}}]\label{def:gnrc}
	We say that $\A\e$ converges to $\A$ in the generalized norm-resolvent sense 
	($\A\e\overset{g.n.r.c}\to \A$) as $\eps\to 0$ if there is a sequence of   $(\delta\e)_{\eps>0}$ with $\delta\e\to 0$
	and linear bounded operators 
	$\J\e \colon \HS\to  \HS\e$, ${\wt\J\e }\colon {\HS\e}\to \HS,$
	satisfying the conditions
	\begin{align}
		\label{C1}   
		|(u,\J\e f)_{\HS\e} - (\wt\J\e u,f)_{\HS}| \leq \delta\e\|f\|_{\HS}\|u\|_{\HS\e},&\quad  \forall f\in\HS,\, u\in\HS\e,
		\\
		\label{C2}
		\|f-\wt\J\e \J\e f \|_{\HS} \leq \delta\e\|f\|_{\HS^1},&\quad  \forall f\in\HS^1,
		\\  \label{C3}
		\|u-\J\e\wt \J\e u \|_{\HS\e}\leq \delta\e\|u\|_{\HS\e^1},&\quad   \forall u\in \HS^1\e, 
		\\
		\label{C4} 
		\|\J\e f \|_{\HS\e} \leq 2\|f\|_{\HS},&\quad \forall f\in \HS,
		\\
		\label{C5}
		\|(\A\e+\Id)^{-1}\J\e f - \J\e (\A+\Id)^{-1} f \|_{\HS\e} \leq \delta\e\|f\|_{\HS},&\quad\forall f\in \HS.
	\end{align}
	
\end{definition}

\begin{remark}
	For the sake of clarity, we formulate Definition~\ref{def:gnrc} in a slightly different  form than the one in \cite{P12}. For example,  \eqref{C1} is written in \cite{P12} in the following equivalent form:
	$$\|\J\e-(\wt\J\e)^*\|_{\HS\to\HS\e}\leq \delta\e.$$
	In fact, the above definition is a mix of Definitions~4.1.1, 4.2.1, 4.2.3, 4.2.6 from \cite{P12}.
\end{remark}

Classical perturbation theory yields that the norm resolvent convergence of  self-adjoint operators in a \textit{fixed} Hilbert space implies the convergence of their spectra. 
The theorem below extends this result to the setting of varying Hilbert spaces.

\begin{theorem}[{\cite[Theorems~4.3.3 \& 4.3.5]{P12}}]
	\label{thA1}
	Let $\A\e\overset{g.n.r.c}\to \A$ as $\eps\to 0$.
	Then
	$$
	\sigma(\A\e)\to \sigma(\A),\quad
	\sigma_\disc(\A\e)\nearrow\sigma_\disc(\A)
	\quad\text{as}\quad\eps\to 0.$$
	The multiplicity is preserved {in the following sense}: if $\lambda\in\sigma_\disc(\A)$ is of multiplicity $\mu$ and
	$[\lambda-L,\lambda+L]\cap\sigma(\A)=\{\lambda\}$ with $L>0$, then 
	for sufficiently small $\eps$  the spectrum of  
	$\A\e$ in $[\lambda-L,\lambda+L]$ is purely discrete 
	and 
	the total multiplicity of the eigenvalues of $\A\e$ contained in $[\lambda-L,\lambda+L]$ equals $\mu$.
	
	If, in addition, $\mu=1$ (i.e.  the eigenvalue $\lambda$ is simple), and  $\psi$ is the corresponding normalized in $\HS$ eigenfunction,  
	then there exists a sequence of   normalized in $\HS\e$ eigenfunctions $\psi\e$ of $\A\e$  such that $$\|\psi\e - \J\e \psi \|_{\HS\e}\to 0\text{ as }\eps\to 0.$$
\end{theorem}

For two bounded normal operators $\Res\e$ and $\Res$   in a \textit{fixed} Hilbert space $\HS$ 
one has the following estimate \cite[Lemma~A.1]{HN99}:
$$d_{H}(\sigma(\Res\e),\sigma(\Res))\leq \|\Res\e-\Res\|_{\HS\to\HS}.$$
Applying it for the resolvents $\Res\e\ceq (\A\e+\Id)^{-1}$ and $\Res \ceq (\A+\Id)^{-1}$ of non-negative self-adjoint operators $\A\e$, $\A$ acting in $\HS$ and taking into account that  by spectral mapping theorem $\wt{d}_{H}(\sigma(\A\e),\sigma(\A))=d_{H}(\sigma(\Res\e),\sigma(\Res))$, we get
$$\wt d_{H}(\sigma(\A\e),\sigma(\A))\leq \|(\A\e+\Id)^{-1}-(\A+\Id)^{-1}\|_{\HS\to\HS}.$$
The theorem below is an analogue of this result to the setting of varying spaces.

\begin{theorem}[{\cite[Theorem~3.4]{KP21}}]\label{thA2}
	Let  
	$\J\e \colon\HS\to {\HS\e}$, $\wt\J\e \colon\HS\e\to {\HS} $ be  linear bounded operators satisfying 
	\begin{align*}
		\|(\A\e+\Id)^{-1}\J\e - \J\e (\A+\Id)^{-1} \|_{\HS\to \HS\e}\leq \rho\e,
		\\
		\|\wt\J\e (\A\e+\Id)^{-1} - (\A+\Id)^{-1}\wt\J\e  \|_{\HS\e\to  \HS}\leq \wt\rho\e ,
	\end{align*}
	and, moreover,
	\begin{align}
		\label{thA2:3}
		\|f\|^2_{ \HS}&\leq \mu\e \|\J\e  f\|^2_{\HS\e}+\nu\e \,  \a[f,f],\quad \forall f\in \dom( \a),\\
		\label{thA2:4}
		\|u\|^2_{\HS\e}&\leq \wt\mu\e \|\wt\J\e  u\|^2_{\HS}+\wt\nu\e \,  
		\a\e[u,u],\quad \forall u\in \dom( \a\e)
	\end{align}
	for some positive constants $\rho\e ,\,\mu\e ,\,\nu\e ,\, \wt\rho\e ,\,\wt\mu\e ,\,\wt\nu\e  $.
	Then   one has
	\begin{gather}\label{thA2:est}
		\widetilde{d}_H\left(\sigma(\A\e),\,\sigma(\A)\right)\leq 
		\max
		\left\{
		{\nu\e\over 2}+\sqrt{{\nu\e^2\over 4}+\rho\e^2\mu\e};\,
		{\wt\nu\e\over 2}+\sqrt{{\wt\nu\e^2\over 4}+\wt\rho\e^2\wt\mu\e}
		\right\}.
	\end{gather}
\end{theorem}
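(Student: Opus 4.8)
The plan is to pass from the (possibly unbounded) operators $\A\e,\A$ to their bounded non-negative self-adjoint resolvents $\Res\e\ceq(\A\e+\Id)^{-1}$ on $\HS\e$ and $\Res\ceq(\A+\Id)^{-1}$ on $\HS$. As recalled just before the statement, the spectral mapping theorem gives $\widetilde{d}_H(\sigma(\A\e),\sigma(\A))=d_H(\sigma(\Res\e),\sigma(\Res))$, the right-hand side being the ordinary Hausdorff distance of closed subsets of $[0,1]$; moreover, since $\A\e$ and $\A$ are unbounded, $0\in\sigma(\Res\e)\cap\sigma(\Res)$. Write $t$ and $\wt t$ for the first and the second expression under the $\max$ in \eqref{thA2:est}, and note the algebraic identity $\rho\e^2\mu\e=t(t-\nu\e)$ (and its analogue with tildes, $\wt\rho\e^2\wt\mu\e=\wt t(\wt t-\wt\nu\e)$), which is all I shall use about these two numbers. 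Since $d_H(X,Y)$ is a maximum of two one-sided suprema, it then suffices to establish the inclusions $\dist(\mu,\sigma(\Res\e))\le t$ for every $\mu\in\sigma(\Res)$ and $\dist(\mu,\sigma(\Res))\le\wt t$ for every $\mu\in\sigma(\Res\e)$. This is the varying-space analogue of the elementary bound $d_H(\sigma(\Res\e),\sigma(\Res))\le\|\Res\e-\Res\|$ from \cite[Lemma~A.1]{HN99}, with the resolvent difference replaced by the transplantation operators $\J\e,\wt\J\e$.

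I would prove the first inclusion by contradiction; the second is completely symmetric, interchanging the two spaces and using $\wt\J\e$, the hypothesis $\|\wt\J\e\Res\e-\Res\wt\J\e\|\le\wt\rho\e$ and \eqref{thA2:4} in place of $\J\e$, $\|\Res\e\J\e-\J\e\Res\|\le\rho\e$ and \eqref{thA2:3}. Fix $\mu\in\sigma(\Res)$; if $\mu=0$ there is nothing to prove, so assume $\mu\in(0,1]$ and, seeking a contradiction, $d\ceq\dist(\mu,\sigma(\Res\e))>t$. By self-adjointness of $\Res$, for every small $\eta>0$ I can pick a unit vector $f_\eta$ in the spectral subspace of $\Res$ associated with $[\mu-\eta,\mu+\eta]$; since $\mu-\eta>0$ we have $f_\eta\in\dom(\A)\subset\dom(\a)$, $\|(\Res-\mu)f_\eta\|_{\HS}\le\eta$ and $\a[f_\eta,f_\eta]=(\A f_\eta,f_\eta)_{\HS}\le(\mu-\eta)^{-1}-1$. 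Transplanting by $u_\eta\ceq\J\e f_\eta$, the decomposition $(\Res\e-\mu)u_\eta=(\Res\e\J\e-\J\e\Res)f_\eta+\J\e(\Res-\mu)f_\eta$ and the commutator hypothesis give $\|(\Res\e-\mu)u_\eta\|_{\HS\e}\le\rho\e+\|\J\e\|\,\eta$, hence $\|u_\eta\|_{\HS\e}\le d^{-1}(\rho\e+\|\J\e\|\eta)$ by self-adjointness of $\Res\e$ and $d>0$. Inserting this into \eqref{thA2:3} and letting $\eta\to0$ (for each fixed $\eps$ the norm $\|\J\e\|$ is finite) yields the inequality $1\le\mu\e\rho\e^2 d^{-2}+\nu\e(\mu^{-1}-1)$.

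This last inequality is where the only genuine subtlety sits. If $\nu\e=0$ then $t^2=\mu\e\rho\e^2$ and the inequality becomes $1\le t^2 d^{-2}$, directly contradicting $d>t$. If $\nu\e>0$, then $d>t$ together with $\rho\e^2\mu\e=t(t-\nu\e)$ gives $\mu\e\rho\e^2 d^{-2}<1-\nu\e/t$, so $1/t<\mu^{-1}-1$, i.e.\ $\mu<t/(t+1)$. On its own this is not yet absurd --- indeed a naive argument here produces a bound on $\dist(\mu,\sigma(\Res\e))$ degrading like $(1-\nu\e(\mu^{-1}-1))^{-1/2}$, which is useless as $\mu$ approaches $\nu\e/(1+\nu\e)$, i.e.\ for eigenvalues of $\A$ near $1/\nu\e$. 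The resolution --- the step I expect to be the crux --- is to combine $\mu<t/(t+1)$ with the \emph{trivial} bound coming from $0\in\sigma(\Res\e)$, namely $d=\dist(\mu,\sigma(\Res\e))\le\mu$: chaining these, $t<d\le\mu<t/(t+1)<t$, a contradiction. Hence $d\le t$. The symmetric argument yields $\dist(\mu,\sigma(\Res))\le\wt t$ for all $\mu\in\sigma(\Res\e)$, and combining the two one-sided bounds with $\widetilde{d}_H(\sigma(\A\e),\sigma(\A))=d_H(\sigma(\Res\e),\sigma(\Res))$ produces exactly \eqref{thA2:est}.
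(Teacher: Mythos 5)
Your argument is correct. The paper itself does not prove Theorem~\ref{thA2} --- it imports it from \cite{KP21} --- so there is no proof to compare against directly; but Remark~\ref{rem:tau} indicates that \cite{KP21} first establishes the $\tau$-parameterised bound \eqref{rem:tau:est} and then obtains \eqref{thA2:est} by optimising over $\tau,\wt\tau\in(0,1)$. Your proof reaches the optimised constant in a single pass: after deriving the inequality $1\le\mu\e\rho\e^2d^{-2}+\nu\e(\mu^{-1}-1)$ (the same estimate a $\tau$-split would work from), you close the argument via the contradiction chain $t<d\le\mu<t/(t+1)<t$. The step $d\le\mu$ is exactly where the membership $0\in\sigma(\Res\e)$ --- guaranteed by the standing assumption in Section~\ref{sec:3} that $\A\e$ is unbounded --- enters, and you correctly diagnosed that without it the bound degrades uncontrollably as $\mu\downarrow\nu\e/(1+\nu\e)$; any proof of the theorem must use this fact at the same point. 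Two small remarks: you implicitly use $t>0$ twice (to divide by $t$ and to conclude $t/(t+1)<t$), which is automatic since $\rho\e,\mu\e>0$ together with your identity $\rho\e^2\mu\e=t(t-\nu\e)$ forces $t>\nu\e\ge0$; and the sub-case $\nu\e=0$ you treat for completeness is actually excluded by the hypothesis that all six constants are strictly positive, so it can be dropped, though your handling of it is harmless.
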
 

\begin{remark}
	\label{rem:tau}
	In fact, in \cite{KP21} the obtained estimate reads
	\begin{gather}\label{rem:tau:est}
		\widetilde{d}_H\left(\sigma(\A\e),\,\sigma(\A)\right) 
		\leq 
		\max\left\{
		\rho\e\sqrt{\mu\e\over \tau};\,{\nu\e\over 1-\tau };\,
		\wt\rho\e\sqrt{\wt\mu\e\over \wt\tau};\,{\wt\nu\e\over 1-\wt\tau }
		\right\},\
		\forall\tau,\wt\tau\in (0,1).
	\end{gather}
	Minimizing
	the right-hand-side of \eqref{rem:tau:est} over $\tau,\wt\tau\in (0,1)$, 
	one gets the estimate \eqref{thA2:est}.
\end{remark}

\begin{remark}
	We established slightly weaker version of {Theorem~\ref{thA2}}   in \cite{CK19}.
\end{remark}

\begin{proposition}[{\cite[Proposition~3.8]{KP21}}]
	\label{prop:2/3}
	Let the conditions \eqref{C1} and \eqref{C2} (respectively, \eqref{C1} and \eqref{C3}) 
	{be} fulfilled with $\delta\e<2/3$. Then 
	the estimate \eqref{thA2:3} (respectively, estimate \eqref{thA2:4}) is valid with 
	$$
	\mu\e=1+{4\delta\e\over 2-3\delta\e},\
	\nu\e={\delta\e\over 2-3\delta\e}\qquad
	\text{(respectively, }\wt\mu\e=1+{4\delta\e\over 2-3\delta\e},\
	\wt\nu\e={\delta\e\over 2-3\delta\e}\text{)}. 
	$$
\end{proposition}

The last theorem gives a useful tool to establish the crucial condition \eqref{C5} in the definition of the generalized norm resolvent convergence. It is well-known (see, e.g., \cite[Theorem~VI.3.6]{Ka66} or \cite[Theorem~VIII.25]{RS72}) that convergence of sesquilinear forms with \emph{common domain} implies norm resolvent convergence of the associated operators. The theorem below is as a generalization of this fact  to the setting of varying spaces.

\begin{theorem}[{\cite[Theorem~A.5]{P06}}]
	\label{thA3}
	Let $\J\e \colon \HS\to  \HS\e$, ${\wt\J\e }\colon {\HS\e}\to \HS$ be linear bounded operators satisfying \eqref{C1}. Furthermore, let 
	$\J\e^1 \colon \HS^1\to  \HS\e^1$, ${\wt\J\e^1 }\colon {\HS^1\e}\to \HS^1$
	be linear operators satisfying
	\begin{gather}
		\label{thA3:1}
		\|\J\e^1 f-\J\e f\|_{\HS\e}\leq \delta\e\|f\|_{\HS^1 },
		\quad \forall f\in \HS^1,
		\\
		\label{thA3:2} 
		\|\wt\J\e^1 u - \wt\J\e u \|_{\HS}\leq  \delta\e\|u\|_{ \HS^1\e},
		\quad \forall u\in\HS\e^1,
		\\\label{thA3:3}
		|\a\e[u,\J^1\e f]-\a[\wt\J^{1}\e u,f]  | \leq 
		\delta\e\|f\|_{\HS^2 }\|u\|_{\HS^1\e},\quad \forall f\in \HS^2 ,\ u\in \HS^1\e .
	\end{gather}
	Then 
	\begin{align}\label{thA3:result}
		\|(\A\e+\Id)^{-1}\J\e -\J\e (\A+\Id)^{-1}\|_{\HS\to\HS\e}\leq 4\delta\e.
	\end{align}
\end{theorem}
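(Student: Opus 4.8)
The plan is to follow the classical strategy for proving form convergence $\Rightarrow$ resolvent convergence, but carefully tracked through the identification operators. Set $f\in\HS$, put $g\ceq(\A+\Id)^{-1}f\in\HS^2$, and $u\e\ceq(\A\e+\Id)^{-1}\J\e f\in\HS^1\e$; the goal is to estimate $\|u\e-\J\e g\|_{\HS\e}$. The natural first move is to test the operator identity against an arbitrary $v\e\in\HS^1\e$. Since $u\e$ solves $(\A\e+\Id)u\e=\J\e f$ in the form sense, one has $\a\e[u\e,v\e]+(u\e,v\e)_{\HS\e}=(\J\e f,v\e)_{\HS\e}$, while $g$ satisfies $\a[g,\cdot]+(g,\cdot)_{\HS}=(f,\cdot)_{\HS}$. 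The idea is to subtract a version of the second identity, transported into $\HS\e$ via $\J\e$ and $\J^1\e$, from the first. Concretely I would write
\begin{align*}
\a\e[u\e-\J^1\e g,v\e]+(u\e-\J\e g,v\e)_{\HS\e}
=(\J\e f,v\e)_{\HS\e}-\a\e[\J^1\e g,v\e]-(\J\e g,v\e)_{\HS\e}
\end{align*}
and then the right-hand side is exactly what \eqref{C1}, \eqref{thA3:1} and \eqref{thA3:3} are designed to control, after choosing $\wt\J^1\e v\e$ as the test vector against $f$ and $g$ on the $\HS$-side.

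The key computation is to bound that right-hand side. Using \eqref{thA3:3} with the pair $(g,v\e)$ (note $g\in\HS^2$) replaces $\a\e[\J^1\e g,v\e]$ by $\a[\wt\J^1\e v\e, g]$ up to an error $\delta\e\|g\|_{\HS^2}\|v\e\|_{\HS^1\e}$; then the form identity for $g$ rewrites $\a[\wt\J^1\e v\e,g]=(f,\wt\J^1\e v\e)_{\HS}-(g,\wt\J^1\e v\e)_{\HS}$; and finally \eqref{C1} (twice, with errors involving $\|f\|_{\HS}$, $\|g\|_{\HS}$) together with \eqref{thA3:2} lets me trade $(f,\wt\J^1\e v\e)_{\HS}$ and $(g,\wt\J^1\e v\e)_{\HS}$ for $(\J\e f,v\e)_{\HS\e}$ and $(\J\e g,v\e)_{\HS\e}$, cancelling the leftover terms on the right. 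What survives is an estimate of the form
\begin{align*}
|\a\e[u\e-\J^1\e g,v\e]+(u\e-\J\e g,v\e)_{\HS\e}|\leq C\delta\e\,\|g\|_{\HS^2}\,\|v\e\|_{\HS^1\e},
\end{align*}
where I have used $\|g\|_{\HS}\leq\|g\|_{\HS^2}$ and $\|f\|_{\HS}=\|(\A+\Id)g\|_{\HS}=\|g\|_{\HS^2}$. This says that $u\e-\J^1\e g$ is "small in $(\HS^1\e)^*$", and I have to upgrade this to smallness of $u\e-\J\e g$ in $\HS\e$.

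The upgrade is the usual coercivity trick: choose $v\e\ceq u\e-\J^1\e g\in\HS^1\e$. Since $\a\e$ is nonnegative, the left-hand side dominates $\|u\e-\J\e g\|_{\HS\e}^2$ minus the cross term coming from the mismatch $\J^1\e g$ versus $\J\e g$, which is controlled by \eqref{thA3:1} as $\delta\e\|g\|_{\HS^1}\|v\e\|_{\HS\e}\leq\delta\e\|g\|_{\HS^2}\|v\e\|_{\HS\e}$. Thus $\|v\e\|_{\HS\e}^2\leq C\delta\e\|g\|_{\HS^2}\|v\e\|_{\HS^1\e}$; but I still need $\|v\e\|_{\HS^1\e}$ bounded. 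Taking $v\e$ as the test vector again in the coercivity estimate gives $\a\e[v\e,v\e]+\|v\e\|_{\HS\e}^2\leq C\delta\e\|g\|_{\HS^2}\|v\e\|_{\HS^1\e}$, i.e. $\|v\e\|_{\HS^1\e}^2\leq C\delta\e\|g\|_{\HS^2}\|v\e\|_{\HS^1\e}$, whence $\|v\e\|_{\HS^1\e}\leq C\delta\e\|g\|_{\HS^2}$ and a fortiori $\|v\e\|_{\HS\e}\leq C\delta\e\|g\|_{\HS^2}$. Finally $\|u\e-\J\e g\|_{\HS\e}\leq\|v\e\|_{\HS\e}+\|\J^1\e g-\J\e g\|_{\HS\e}\leq C\delta\e\|g\|_{\HS^2}+\delta\e\|g\|_{\HS^1}\leq C\delta\e\|f\|_{\HS}$, which is \eqref{thA3:result} after absorbing constants; a more careful bookkeeping of each application of \eqref{C1}, \eqref{thA3:1}--\eqref{thA3:3} gives the clean constant $4\delta\e$. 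The main obstacle, and the only genuinely delicate point, is organizing the algebra so that the "leftover" inner-product terms $(g,\wt\J^1\e v\e)_{\HS}$ and $(\J\e g,v\e)_{\HS\e}$ cancel rather than accumulate — one must be disciplined about which identification operator ($\J\e$ versus $\J^1\e$, $\wt\J\e$ versus $\wt\J^1\e$) is used at each step and account for every swap via the correct one of \eqref{C1}, \eqref{thA3:1}, \eqref{thA3:2}.
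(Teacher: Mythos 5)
Your proof is correct and follows the standard Galerkin-style argument that is exactly the route taken in \cite{P06} (the paper itself merely cites Theorem~\ref{thA3} from \cite[Theorem~A.5]{P06} without reproducing its proof): set $g=(\A+\Id)^{-1}f$, $v_\varepsilon=u_\varepsilon-\J^1_\varepsilon g$ with $u_\varepsilon=(\A_\varepsilon+\Id)^{-1}\J_\varepsilon f$, test the form identities against $v_\varepsilon$, use \eqref{thA3:3} to move the form to the $\HS$-side, the equation for $g$ to turn it into inner products, \eqref{thA3:1}, \eqref{thA3:2} and \eqref{C1} to swap identification operators, and finally $\|g\|_{\HS}\le\|g\|_{\HS^1}\le\|g\|_{\HS^2}=\|f\|_{\HS}$ and $\|v_\varepsilon\|_{\HS_\varepsilon}\le\|v_\varepsilon\|_{\HS^1_\varepsilon}$ to close the coercivity estimate. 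Two cosmetic points only: you are a bit loose with the order of arguments in the sesquilinear forms and inner products (it all comes out right after taking absolute values, but the forms are symmetric, not symmetric-bilinear), and the phrase ``the left-hand side dominates $\|u_\varepsilon-\J_\varepsilon g\|^2_{\HS_\varepsilon}$ minus the cross term'' should read ``equals $\|v_\varepsilon\|^2_{\HS^1_\varepsilon}$ plus the cross term $(\J^1_\varepsilon g-\J_\varepsilon g,v_\varepsilon)_{\HS_\varepsilon}$'' — which is anyway what you then use.
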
 

\begin{remark}
	Tracing the proof of \cite[Theorem~A.5]{P06} one observes that
	the estimate \eqref{thA3:result}
	remains valid if \eqref{thA3:3} is substituted 
	by the weaker condition
	\begin{gather*}
		\left|\a\e[u,\J^1\e f]-\a[\wt\J^{1}\e u,f] \right|\leq 
		\delta\e\|f\|_{\HS^2 }\|u\|_{\HS^2\e},\quad 
		\forall f\in \HS^2 ,\ u\in \HS^2\e, 
	\end{gather*} 
	where $\HS^2\e\ceq\dom(\A\e)$, $\|u\|_{\HS^2\e}\ceq\|{(\A\e+\Id) f}\|_{\HS\e}$.
	Nevertheless, in most of the  applications one is able to establish stronger condition~\eqref{thA3:3}.
\end{remark} 

{
\section{\label{appendix:B}Proof of Theorem~\ref{thAAA}}

Using the above abstract results one can easily prove Theorem~\ref{thAAA}.
By virtue of Theorem~\ref{thA3}, assumptions \eqref{AAA:1}, \eqref{AAA:5}, \eqref{AAA:6},  \eqref{AAA:7} imply   
\begin{gather}\label{ResRes:1}
	\|(\A\e+\Id)^{-1}\J\e -\J\e (\A+\Id)^{-1}\|_{\HS\to\HS\e}\leq 4\delta\e,
\end{gather}
moreover, since $((\A\e+\Id)^{-1}\J\e -\J\e (\A+\Id)^{-1})^*=\wt\J\e(\A\e+\Id)^{-1} - (\A+\Id)^{-1} \wt\J\e$ (this equality follows from $(\J\e)^*=\wt\J\e$, see~\eqref{AAA:1}), we also get
\begin{gather}\label{ResRes:2}
	\|\wt\J\e(\A\e+\Id)^{-1} - (\A+\Id)^{-1} \wt\J\e\|_{\HS\e\to\HS}\leq 4\delta\e.
\end{gather}
It follows from \eqref{AAA:1}--\eqref{AAA:4}, \eqref{ResRes:1} that
\begin{gather}\label{gnrc}
	\A\e\overset{g.n.r.c.}\to \A\text{ as }\eps\to 0,
\end{gather}
whence, by virtue of Theorem~\ref{thA1}, we deduce
the statements (i)--(iii) of Theorem~\ref{thAAA}.
Furthermore, by virtue of Proposition~\ref{prop:2/3}, 
\eqref{AAA:1}--\eqref{AAA:3} imply the estimates 
\begin{align}
	\label{ResRes:3}
	\|f\|^2_{ \HS}&\leq 5 \|\J\e  f\|^2_{\HS\e}+2\delta\e \,  \a[f,f],\quad \forall f\in \dom( \a),\\
	\label{ResRes:4}
	\|u\|^2_{\HS\e}&\leq 5\|\wt\J\e  u\|^2_{\HS}+2\delta\e \,  
	\a\e[u,u],\quad \forall u\in \dom( \a\e)
\end{align}
Here we  make use of $\delta\e\leq 1/2$ leading
to  the inequalities
$$
\mu\e\leq 5,\quad
\nu\e\leq 2\delta\e, \quad
\wt\mu\e\leq 5,\quad
\wt\nu\e\leq 2\delta\e 
$$
for the constants standing in Proposition~\ref{prop:2/3}.
By virtue of Theorem~\ref{thA2}, the estimates 
\eqref{ResRes:1}, \eqref{ResRes:2}, \eqref{ResRes:3}, \eqref{ResRes:4} imply
the last statement of Theorem~\ref{thAAA}:
\begin{gather*}
	\widetilde{d}_H\left(\sigma(\A\e),\,\sigma(\A)\right)\leq 
	{2\delta\e\over 2}+\sqrt{{(2\delta\e)^2\over 4}+5(4\delta\e)^2}= 10\delta\e.
\end{gather*}
}

\section*{Acknowledgements}

{ The authors are grateful to the anonymous referees for many useful comments which improved the manuscript considerably. }
The work of A.K. is supported by the Czech Science Foundation (GA\v{C}R) through the project 22-18739S.

\bibliographystyle{siamplain}
\bibliography{references}

\begin{thebibliography}{10}

\bibitem{ACDP92}
{\sc E.~Acerbi, V.~C. Piat, G.~D. Maso, and D.~Percivale}, {\em An extension
  theorem from connected sets, and homogenization in general periodic domains},
  Nonlinear Anal. Theory Methods Appl., 1 (1992), pp.~481--496,
  \url{https://doi.org/10.1016/0362-546X(92)90015-7}.

\bibitem{Ad75}
{\sc R.~Adams}, {\em Sobolev spaces}, Academic Press, New York--London, 1975.

\bibitem{AP21}
{\sc C.~Ann\'e and O.~Post}, {\em Wildly perturbed manifolds: norm resolvent
  and spectral convergence}, J. Spectr. Theory, 11 (2021), pp.~229--279,
  \url{https://doi.org/10.4171/JST/340}.

\bibitem{AHH91}
{\sc J.~Arrieta, J.~Hale, and Q.~Han}, {\em Eigenvalue problems for nonsmoothly
  perturbed domains}, J. Differ. Equations, 91 (1991), pp.~24--52,
  \url{https://doi.org/10.1016/0022-0396(91)90130-2}.

\bibitem{BK19}
{\sc J.~Behrndt and A.~Khrabustovskyi}, {\em Construction of self-adjoint
  differential operators with prescribed spectral properties}, Math. Nachr.,
  295 (2022), \url{https://doi.org/10.1002/mana.201900491}.

\bibitem{BS10}
{\sc G.~Bouchitt\'e and B.~Schweizer}, {\em Homogenization of maxwell’s
  equations in a split ring geometry}, Multiscale Model. Simul., 8 (2010),
  pp.~717--750, \url{https://doi.org/10.1137/09074557X}.

\bibitem{BCP99}
{\sc A.~Bourgeat, I.~Chueshov, and L.~Pankratov}, {\em Homogenization of
  attractors for semilinear parabolic equations in domains with spherical
  traps}, C. R. Acad. Sci., Paris, S\'{e}r. I, Math., 329 (1999), pp.~581--586,
  \url{https://doi.org/10.1016/S0764-4442(00)80005-1}.

\bibitem{BP97}
{\sc A.~Bourgeat and L.~Pankratov}, {\em Homogenization of semilinear parabolic
  equations in domains with spherical traps}, Appl. Anal., 64 (1997),
  pp.~303--317, \url{https://doi.org/10.1080/00036819708840538}.

\bibitem{CK15}
{\sc G.~Cardone and A.~Khrabustovskyi}, {\em Neumann spectral problem in a
  domain with very corrugated boundary}, J. Differ. Equations, 259 (2015),
  pp.~2333--2367, \url{https://doi.org/10.1016/j.jde.2015.03.031}.

\bibitem{CK17}
{\sc G.~Cardone and A.~Khrabustovskyi}, {\em Spectrum of a singularly perturbed
  periodic thin waveguide}, J. Math. Anal. Appl., 454 (2017), pp.~673--694,
  \url{https://doi.org/10.1016/j.jmaa.2017.05.012}.

\bibitem{CK19}
{\sc G.~Cardone and A.~Khrabustovskyi}, {\em $\delta'$-interaction as a limit
  of a thin neumann waveguide with transversal window}, J. Math. Anal. Appl.,
  473 (2019), pp.~1320--1342, \url{https://doi.org/10.1016/j.jmaa.2019.01.024}.

\bibitem{CH53}
{\sc R.~Courant and D.~Hilbert}, {\em Methods of Mathematical Physics. Vol. 1},
  Wiley-Interscience, New York, 1953.

\bibitem{D95}
{\sc E.~Davies}, {\em Spectral Theory and Differential Operators}, Cambridge
  University Press, Cambridge, 1995.

\bibitem{CdV87}
{\sc Y.~C. de~Verdi\'ere}, {\em Construction de laplaciens dont une partie
  finie du spectre est donn\'ee}, Ann. Sci. \'Ecole Norm. Sup. (4), 20 (1987),
  pp.~599--615, \url{https://doi.org/10.24033/asens.1546}.

\bibitem{DLPSW11}
{\sc W.~D{\"o}rfler, A.~Lechleiter, M.~Plum, G.~Schneider, and C.~Wieners},
  {\em Photonic crystals. Mathematical analysis and numerical approximation},
  Springer, Berlin, 2011.

\bibitem{CDG02}
{\sc A.~C. Esposito, C.~D'Apice, and A.~Gaudiello}, {\em A homogenization
  problem in a perforated domain with both dirichlet and neumann conditions on
  the boundary of the holes}, Asymptot. Anal., 31 (2002), pp.~297--316,
  \url{https://content.iospress.com/articles/asymptotic-analysis/asy526}.

\bibitem{Ev98}
{\sc L.~Evans}, {\em Partial Differential Equations}, AMS, Providence, RI,
  1998.

\bibitem{FB97}
{\sc D.~Felbacq and G.~Bouchitt\'e}, {\em Homogenization of a set of parallel
  fibres}, Waves Random Media, 7 (1997), pp.~245--256,
  \url{https://doi.org/10.1088/0959-7174/7/2/006}.

\bibitem{HKP97}
{\sc R.~Hempel, T.~Kriecherbauer, and P.~Plankensteiner}, {\em Discrete and
  cantor spectrum for neumann laplacians of combs}, Math. Nachr., 188 (1997),
  pp.~141--168, \url{https://doi.org/10.1002/mana.19971880109}.

\bibitem{HSS91}
{\sc R.~Hempel, L.~Seco, and B.~Simon}, {\em The essential spectrum of neumann
  laplacians on some bounded singular domains}, J. Funct. Anal., 102 (1991),
  pp.~448--483, \url{https://doi.org/10.1016/0022-1236(91)90130-W}.

\bibitem{He06}
{\sc A.~Henrot}, {\em Extremum problems for eigenvalues of elliptic operators},
  Birkh{\"a}user, Basel, 2006.

\bibitem{HN99}
{\sc I.~Herbst and S.~Nakamura}, {\em Schr\"odinger operators with strong
  magnetic fields: Quasi-periodicity of spectral orbits and topology}.
\newblock Differential operators and spectral theory. M. Sh. Birman’s 70th
  anniversary collection. Providence, RI: American Mathematical Society.
  Transl., Ser. 2, Am. Math. Soc. 189(41), 105-123 (1999).

\bibitem{Ka66}
{\sc T.~Kato}, {\em Perturbation Theory for Linear Operators}, Springer,
  Berlin-Heidelberg-New York, 1966.

\bibitem{Kh14}
{\sc A.~Khrabustovskyi}, {\em Opening up and control of spectral gaps of the
  laplacian in periodic domains}, J. Math. Phys., 55 (2014), 121502,
  \url{https://doi.org/10.1063/1.4902935}.

\bibitem{KK15}
{\sc A.~Khrabustovskyi and E.~Khruslov}, {\em Gaps in the spectrum of the
  neumann laplacian generated by a system of periodically distributed traps},
  Math. Methods Appl. Sci., 38 (2015), pp.~11--26,
  \url{https://doi.org/10.1002/mma.3046}.

\bibitem{KPl21}
{\sc A.~Khrabustovskyi and M.~Plum}, {\em Operator estimates for homogenization
  of the robin laplacian in a perforated domain}, J. Differ. Equations, 338
  (2022), pp.~474--517, \url{https://doi.org/10.1016/j.jde.2022.08.005}.

\bibitem{KP18}
{\sc A.~Khrabustovskyi and O.~Post}, {\em Operator estimates for the crushed
  ice problem}, Asymptotic Anal., 110 (2018), pp.~137--161,
  \url{https://doi.org/10.3233/ASY-181480}.

\bibitem{KP21}
{\sc A.~Khrabustovskyi and O.~Post}, {\em A geometric approximation of
  $\delta$-interactions by neumann laplacians}, J. Phys. A: Math. Theor., 54
  (2021), 465201, \url{https://doi.org/10.1088/1751-8121/ac2d52}.

\bibitem{Khru89}
{\sc E.~Khruslov}, {\em Averaged models of diffusion in cracked porous media},
  Doklady Math., 34 (1989), pp.~980--981.

\bibitem{Ko95}
{\sc A.~Kovalevskij}, {\em Averaging of the neumann problems for nonlinear
  elliptic equations in domains with accumulators}, Ukr. Math. J., 47 (1995),
  pp.~227--249, \url{https://doi.org/10.1007/BF01056714}.

\bibitem{LS17}
{\sc A.~Lamacz and B.~Schweizer}, {\em Effective acoustic properties of a
  meta-material consisting of small helmholtz resonators}, Discrete Contin.
  Dyn. Syst. Ser. S, 10 (2017), pp.~815--835,
  \url{https://doi.org/10.3934/dcdss.2017041}.

\bibitem{MK06}
{\sc V.~A. Marchenko and E.~Y. Khruslov}, {\em Homogenization of partial
  differential equations}, Birkh{\"a}user, Boston, 2006.

\bibitem{Pa92}
{\sc L.~Pankratov}, {\em The asymptotic behavior of the solutions of
  variational problems in domains with ``accumulators''}, J. Sov. Math., 58
  (1992), pp.~364--371, \url{https://doi.org/10.1007/BF01097289}.

\bibitem{P06}
{\sc O.~Post}, {\em Spectral convergence of quasi-one-dimensional spaces}, Ann.
  Henri Poincar\'{e}, 7 (2006), pp.~933--973,
  \url{https://doi.org/10.1007/s00023-006-0272-x}.

\bibitem{P12}
{\sc O.~Post}, {\em Spectral analysis on graph-like spaces}, Springer, Berlin,
  2012.

\bibitem{RT75}
{\sc J.~Rauch and M.~Taylor}, {\em Potential and scattering theory on wildly
  perturbed domains}, J. Funct. Anal., 18 (1975), pp.~27--59,
  \url{https://doi.org/https://doi.org/10.1016/0022-1236(75)90028-2}.

\bibitem{RS72}
{\sc M.~Reed and B.~Simon}, {\em Methods of Modern Mathematical Physics I:
  Functional Analysis}, Academic Press, New York--London, 1972.

\bibitem{Sch15}
{\sc B.~Schweizer}, {\em The low-frequency spectrum of small helmholtz
  resonators}, Proc. A, R. Soc. Lond., 471 (2015), 20140339,
  \url{https://doi.org/10.1098/rspa.2014.0339}.

\bibitem{Sch17}
{\sc B.~Schweizer}, {\em Resonance meets homogenization. construction of
  meta-materials with astonishing properties}, Jahresber. Dtsch. Math.-Ver.,
  119 (2017), pp.~31--51, \url{https://doi.org/10.1365/s13291-016-0153-2}.

\end{thebibliography}

\end{document}